\documentclass[12pt,a4paper,reqno]{amsart}
\usepackage[a4paper,left=30mm,right=30mm,top=36mm,bottom=36mm]{geometry}
\newtheorem{dfntn}{Definition}[section] 
\newtheorem{rmrk}{Remark}[section]    
\newtheorem{thrm}{Theorem}[section]     
\newtheorem{lmm}{Lemma}[section]	

\usepackage{color}

\usepackage{amssymb}
\usepackage{amsmath}
\usepackage{graphicx}
\usepackage{float}
\usepackage[utf8]{inputenc}
\usepackage[english]{babel}
\usepackage{subfig}
\usepackage{mathtools}

\newcommand{\R}{\mathbb{R}}
\newcommand{\T}{\mathbb{T}}
\newcommand{\N}{\mathbb{N}}
\newcommand{\eps}{\varepsilon}

\newcommand{\weak}{\rightharpoonup}

\renewcommand{\div}{\mathrm{div}}
\newcommand{\vertiii}[1]{{\left\vert\kern-0.25ex\left\vert\kern-0.25ex\left\vert #1
    \right\vert\kern-0.25ex\right\vert\kern-0.25ex\right\vert}}

\def\Xint#1{\mathchoice
   {\XXint\displaystyle\textstyle{#1}}%
   {\XXint\textstyle\scriptstyle{#1}}%
   {\XXint\scriptstyle\scriptscriptstyle{#1}}%
   {\XXint\scriptscriptstyle\scriptscriptstyle{#1}}%
   \!\int}
\def\XXint#1#2#3{{\setbox0=\hbox{$#1{#2#3}{\int}$}
     \vcenter{\hbox{$#2#3$}}\kern-.5\wd0}}
\def\meanint{\Xint-}

\numberwithin{equation}{section}

\usepackage{hyperref}

\begin{document}

\title[Analytical and numerical homogenization of Isaacs equations]{\bfseries\boldmath Analytical and numerical periodic homogenization of Hamilton--Jacobi--Bellman and Isaacs equations in a $W^{2,p}$ Cordes framework}

\author[T. Sprekeler]{Timo Sprekeler}
\address[Timo Sprekeler]{Department of Mathematics, Texas A{\&}M University, College Station, TX 77843, USA.}
\email{timo.sprekeler@tamu.edu}

\author[H. V. Tran]{Hung V. Tran}
\address[Hung V. Tran]{Department of Mathematics, University of Wisconsin Madison, Van Vleck Hall, 480 Lincoln Drive, Madison, Wisconsin 53706, USA.}
\email{hung@math.wisc.edu}

\subjclass[2020]{35B27, 35J60, 65N12, 65N15, 65N30}
\keywords{Hamilton--Jacobi--Bellman and Isaacs equations, homogenization, Cordes condition, finite element methods}
\date{\today}

\begin{abstract}
We study periodic homogenization of fully-nonlinear second-order Hamilton--Jacobi--Bellman and Isaacs equations subject to a homogeneous Dirichlet boundary condition and the numerical approximation of the effective Hamiltonian, where the diffusion matrix is assumed to satisfy the Cordes condition. In the first part, assuming the Cordes parameter is sufficiently large, we show existence and uniqueness of $W^{2,p}$ solutions to the true cell problem, prove homogenization under minimal regularity assumptions, and establish a quantitative homogenization result. In the second part, we propose and rigorously analyze $W^{2,p}$-conforming and mixed finite element methods for the approximation of the corrector, which are then used in the approximation of the effective Hamiltonian. The key tool used throughout this work is Campanato's concept of near operators.

\end{abstract}

\maketitle

\section{Introduction}

In this paper, we discuss the homogenization of the fully-nonlinear Isaacs problem
\begin{align}\label{Isaacs intro}
    \begin{split}
    \inf_{\alpha\in \mathcal{A}} \sup_{\beta\in \mathcal{B}} \left\{-A\left(\,\frac{\cdot}{\eps}\,,\alpha,\beta\right):D^2 u_{\eps} - f(\,\cdot\,,\alpha,\beta)\right\} &= 0\quad\text{in }\Omega,\\
    u_{\eps} &= 0\quad\text{on }\partial\Omega,
\end{split}
\end{align}
where $\eps>0$ is a parameter, $\mathcal{A}$ and $\mathcal{B}$ are compact metric spaces, $\Omega\subset \R^n$ is a bounded Lipschitz domain, $f\in C(\overline{\Omega}\times \mathcal{A}\times \mathcal{B})$, and $A = A(y,\alpha,\beta)\in C(\mathbb{R}^n\times \mathcal{A}\times \mathcal{B};\R^{n\times n}_{\mathrm{sym}})$ is $\mathbb{Z}^n$-periodic in $y$, uniformly elliptic, and satisfies the Cordes condition. When $\Omega$ is convex, it is known that there exists a unique strong solution $u_\eps\in H^2(\Omega)\cap H^1_0(\Omega)$ to \eqref{Isaacs intro}; see \cite{KS21}. Assuming that $\Omega$ is such that the Laplacian is a bijection from $W^{2,p}(\Omega)\cap W^{1,p}_0(\Omega)$ to $L^{p}(\Omega)$ for some $p>n$, and that the Cordes parameter is sufficiently large (see Definition \ref{Def: Mdelta}), we show that $u_\eps\in W^{2,p}(\Omega)\cap W^{1,p}_0(\Omega)$ for some $p>n$ and we study analytical and numerical homogenization of \eqref{Isaacs intro}.

The Cordes condition makes it possible to convert \eqref{Isaacs intro} into an equivalent problem
\begin{align*}
    H\left(\,\cdot\,,\,\frac{\cdot}{\eps}\,,D^2 u_\eps\right) := \inf_{\alpha\in \mathcal{A}} \sup_{\beta\in \mathcal{B}} \left\{-\tilde A\left(\,\frac{\cdot}{\eps}\,,\alpha,\beta\right):D^2 u_{\eps} - \tilde f\left(\,\cdot\,,\,\frac{\cdot}{\eps}\, ,\alpha,\beta\right)\right\} &= 0\quad\text{in }\Omega,\\
    u_{\eps} &= 0\quad\text{on }\partial\Omega,
\end{align*}
with $\lvert \tilde A - I_n\rvert < 1$ in $\R^n\times \mathcal{A}\times \mathcal{B}$, where the new coefficients $\tilde A$ and $\tilde f$ are obtained by multiplying $A$ and $f$ by the positive function $\lvert A\rvert^{-2}\mathrm{tr}(A)$, respectively.

Homogenization is classical and well-understood if the diffusion coefficient $A = A(y,\alpha,\beta)$ is H\"{o}lder continuous in $y$ with exponent in $(\frac{1}{2},1]$, uniformly in $\alpha$ and $\beta$; see, e.g., \cite{AB10,ABM07,AS14,CSW05,Eva92}. In this case, it is known that as $\eps\searrow 0$ we have that $(u_\eps)_{\eps>0}$ converges uniformly to the viscosity solution $u\in C(\overline{\Omega})$ of the homogenized problem 
\begin{align}\label{eff prob intro}
\begin{split}
\overline{H}(\,\cdot\,,D^2 u) &= 0\quad\text{in }\Omega,\\
    u &= 0\quad\text{on }\partial\Omega,
\end{split}
\end{align}
where the effective Hamiltonian $\overline{H}:\overline{\Omega}\times \R^{n\times n}_{\mathrm{sym}}\rightarrow \R$ is defined as follows: for each $(x,X)\in \overline{\Omega}\times \R^{n\times n}_{\mathrm{sym}}$, the value $ \overline{H}(x,X)\in \R$ is the unique constant for which the true cell problem
\begin{align}\label{cell intro}
    H(x,\,\cdot\,,X+D^2 v(\,\cdot\,;x,X)) = \overline{H}(x,X)\quad\text{in }\T^n,\qquad \int_{\T^n} v(y;x,X)\,\mathrm{d}y = 0
\end{align}
has a viscosity solution $v(\,\cdot\,;x,X)\in C(\T^n)$, referred to as a corrector. When the Hamiltonian is independent of the slow variable, an algebraic convergence rate has been established in \cite{CS10} under an additional regularity assumption on the solution to the homogenized problem. We refer to \cite{CM09,KL16,Mar14} for convergence rates in the convex or concave setting, and to \cite{AFL22,ASS14,GST25,GST27,GT25,GTY20,ST21} for convergence rates in the linear setting. In practice, problems of the form \eqref{Isaacs intro} arise in stochastic control and differential games in environments with small-scale heterogeneity. When $\mathcal{A}$ or $\mathcal{B}$ is a singleton set, the Isaacs equation reduces to the Hamilton--Jacobi--Bellman (HJB) equation. Further, the Monge--Amp\`{e}re (MA) equation arising in illumination optics and optimal transport admits a degenerate Isaacs formulation; see, e.g., \cite{BK21,FJ17}. We refer to \cite{CGL26,JLT26} for recent results in quantitative homogenization of MA equations.  

The goal of our paper is twofold: to prove qualitative and quantitative periodic homogenization results for \eqref{Isaacs intro} under the Cordes condition, and to study numerical homogenization via a finite element method.

In the first part, we will establish existence and uniqueness of a $W^{2,p}(\T^n)$ solution to the true cell problem \eqref{cell intro} using Campanato's theory of near operators, and use it to prove qualitative and quantitative periodic homogenization results for \eqref{Isaacs intro} under the Cordes condition.

The second part of this work is devoted to numerical homogenization. Motivated by the fact that, when $\eps > 0$ is small, an accurate direct simulation of \eqref{Isaacs intro} via a standard finite element method requires a prohibitively fine mesh for the accurate approximation of the solution, a classical idea is to obtain the macroscopic profile of the solution to \eqref{Isaacs intro} by approximating the solution to the homogenized problem \eqref{eff prob intro}. The main difficulty lies in the numerical approximation of the effective Hamiltonian, and we refer to \cite{GSS21} (for HJB) and \cite{KS22} (for Isaacs) for prior approaches under stronger regularity assumptions that are based on mixed and discontinuous Galerkin finite element methods for the numerical approximation of so-called approximate correctors $v^{\sigma}(\cdot;x,X)$, i.e., the solution to an approximate cell problem (cell $\sigma$-problem). A distinctive feature of the present work is that we can detour the use of approximate cell problems altogether.

The literature on the numerical approximation of effective Hamiltonians in the homogenization of fully-nonlinear second-order HJB and Isaacs equations is quite sparse. Besides \cite{GSS21,KS22}, a finite difference approximation in the HJB setting has been proposed in \cite{CM09} and some exact formulas and numerical simulations for certain types of HJB Hamiltonians can be found in \cite{FO18,FOO18}. For numerical homogenization of linear nondivergence-form equations, we refer the reader to \cite{CSS20,Spr24,Spr26,SSZ25,SWZ26} for the numerical approximation of effective diffusion matrices and to \cite{FGP24,HMS26} for numerical homogenization beyond periodicity.

On the other hand, the literature on numerical effective Hamiltonians in the homogenization of first-order Hamilton--Jacobi and Hamilton--Jacobi--Isaacs equations, and viscous Hamilton--Jacobi equations, is comparatively vast; see, e.g., \cite{ACC08,FR08,GLQ18,GO04,LYZ11,Qia03,QST24}.

This paper is structured as follows. In Section \ref{Sec: 2}, we begin by presenting the Cordes setting in Section \ref{Sec: 2.1}, and by recalling Campanato's notion of near operators in Lemma \ref{Lmm: near} which is central to essentially all arguments in this paper. We then show existence and uniqueness of a solution $u_\eps\in W^{2,p}(\Omega)\cap W^{1,p}_0(\Omega)$ to \eqref{Isaacs intro} for some $p>n$, and derive a uniform $W^{2,p}$-bound in Section \ref{Sec: 2.2}. 

In Section \ref{Sec: 3}, we study homogenization of \eqref{Isaacs intro}. After outlining the objective in Section \ref{Sec: setting hom}, we prove existence and uniqueness of a solution $v(\cdot;x,X)\in W^{2,p}(\T^n)\cap L^p_0(\T^n)$ to the true cell problem \eqref{cell intro} in Section \ref{Sec: 3.2}. In Section \ref{Sec: 3.3}, we show that $(u_\eps)_{\eps>0}$ converges weakly in $W^{2,p}(\Omega)$ to the unique solution $u\in W^{2,p}(\Omega)\cap W^{1,p}_0(\Omega)$ to the homogenized problem \eqref{eff prob intro}, and we quantify the rate of convergence in the $L^\infty(\Omega)$-norm in Section \ref{Sec: 3.4}. 

In Section \ref{Sec: 4}, we study numerical homogenization of \eqref{Isaacs intro}. We construct and rigorously analyze a $W^{2,p}$-conforming and a mixed finite element method for the approximation of the corrector in Sections \ref{Sec: 4.1} and \ref{Sec: 4.2}, respectively, which is then used to construct and analyze a method for the numerical approximation of the effective Hamiltonian in Section \ref{Sec: 4.3}.

\section{Cordes Framework for $W^{2,n}$ Solutions}\label{Sec: 2}

\subsection{Setting}\label{Sec: 2.1}
Let $\Omega \subset \R^n$ be a bounded Lipschitz domain which is assumed to be such that
\begin{align}\label{Om ass}
    -\Delta:W^{2,n}(\Omega)\cap W^{1,n}_0(\Omega)\rightarrow L^n(\Omega)\quad\text{is a bijection}.
\end{align}
Let $\mathcal{A}$ and $\mathcal{B}$ be compact metric spaces, and let $f\in C(\overline{\Omega}\times \mathcal{A}\times \mathcal{B})$. For a small parameter $\eps>0$, we consider the Isaacs problem
\begin{align}\label{ueps prob}
\begin{split}
    \inf_{\alpha\in \mathcal{A}} \sup_{\beta\in \mathcal{B}} \{-A^{\eps}_{\alpha\beta}:D^2 u_{\eps} - f_{\alpha\beta}\} &= 0\quad\text{in }\Omega,\\
    u_{\eps} &= 0\quad\text{on }\partial\Omega,
\end{split}
\end{align}
where we have written $A^{\eps}_{\alpha\beta}:= A_{\alpha\beta}\left(\frac{\cdot}{\eps}\right)$ with $A_{\alpha\beta} := A(\,\cdot\,,\alpha,\beta)$ and $f_{\alpha\beta} := f(\,\cdot\,,\alpha,\beta)$. Setting $\delta_0:= 1-c_{n}^2$, we assume that there exists a constant $\delta\in (\delta_0,1]$ such that $A\in \mathcal{M}_\delta$. Here, writing $W(\Omega):=W^{2,n}(\Omega)\cap W^{1,n}_0(\Omega)$, the constant $c_n \in (0,1]$ (see Lemma \ref{Lmm: cp leq 1}) is given by
\begin{align*}
    c_n := \|(-\Delta)^{-1}\|_{(L^n(\Omega),\|\cdot\|_{L^n(\Omega)})\rightarrow (W(\Omega),\lvert  \cdot \rvert_{W^{2,n}(\Omega)})}^{-1}  = \inf_{v\in W(\Omega)\backslash\{0\}} \frac{\|\Delta v\|_{L^n(\Omega)}}{\|D^2 v\|_{L^n(\Omega)}},
\end{align*}
and the set $\mathcal{M}_\delta$ is defined as follows: 
\begin{dfntn}\label{Def: Mdelta}
    For $\delta\in (\delta_0,1]$, we define $\mathcal{M}_\delta$ to be the set of all functions $A\in C(\mathbb{R}^n\times \mathcal{A}\times \mathcal{B};\R^{n\times n}_{\mathrm{sym}})$ that have the following properties: 
    \begin{itemize}
        \item periodicity: the map $A_{\alpha\beta}:\R^n\rightarrow \R^{n\times n}_{\mathrm{sym}}$ is $\mathbb{Z}^n$-periodic for all $(\alpha,\beta)\in \mathcal{A}\times \mathcal{B}$;
        \item uniform ellipticity: $\inf_{(y,\alpha,\beta)\in\mathbb{R}^n\times \mathcal{A}\times \mathcal{B}}\; \inf_{\xi\in \R^n\backslash\{0\}} \lvert \xi\rvert^{-2} A(y,\alpha,\beta)\xi\cdot \xi > 0$;
        \item Cordes property:
        \begin{align}\label{Cor}
        \sup_{\mathbb{R}^n\times \mathcal{A}\times \mathcal{B}}\frac{\lvert A\rvert^2}{(\mathrm{tr}(A))^2} \leq \frac{1}{n-1 + \delta},\quad\text{where}\quad \lvert A\rvert := \sqrt{A:A}.
        \end{align}
    \end{itemize}
\end{dfntn}
To simplify presentation, we regard a $\mathbb{Z}^n$-periodic function on $\R^n$ as a function on $\mathbb{T}^n := \R^n / \mathbb{Z}^n$ and vice versa.
Introducing the positive renormalization function 
\begin{align*}
    \gamma := \frac{\mathrm{tr}(A)}{\lvert A\rvert^2}\in C(\mathbb{R}^n\times \mathcal{A}\times \mathcal{B}),
\end{align*}
the key consequence of the Cordes property \eqref{Cor} is that
\begin{align}\label{gam A}
    \lvert I_n - \gamma A \rvert^2 = n -\gamma\, \mathrm{tr}(A) \leq 1-\delta\quad \text{in }\mathbb{R}^n\times \mathcal{A}\times \mathcal{B}.
\end{align}
Let us note that, when $n = 2$ and $\Omega$ is convex, then \eqref{Om ass} is satisfied and we have $c_n = 1$. Further, in this case, uniform ellipticity implies that the Cordes condition is satisfied, i.e., that there exists $\delta\in (0,1]$ such that \eqref{Cor} holds; see, e.g., \cite{SS14}.

\subsection{Existence, uniqueness, and properties of {\boldmath $W^{2,n}$} solutions}\label{Sec: 2.2}

Our goal is to prove that the operator
\begin{align}\label{op Feps}
    F_\eps : W(\Omega)\rightarrow L^n(\Omega),\quad v\mapsto \inf_{\alpha\in \mathcal{A}} \sup_{\beta\in \mathcal{B}} \{-A^{\eps}_{\alpha\beta}:D^2 v - f_{\alpha\beta}\} 
\end{align}
is bijective and to obtain a uniform (in $\eps$) bound on the $W^{2,n}$-solution to \eqref{ueps prob}.

As a starting point, we recall Campanato's concept of near operators.

\begin{lmm}[nearness of operators \cite{Cam94}]\label{Lmm: near}
Let $F,L:X \rightarrow Y$ be two maps from a nonempty set $X$ to a real Banach space $(Y,\|\cdot\|)$. Suppose that $L$ is bijective and that $F$ is near $L$, that is, there exist constants $a > 0$ and $K\in [0,1)$ such that
\begin{align*}
\| L(v_1) - L(v_2) - a\left[ F(v_1) - F(v_2)\right]\| \leq K \|L(v_1) - L(v_2)\|\quad \forall\, v_1,v_2\in X.
\end{align*}
Then, $F$ is bijective and we have the bound
\begin{align*}
\|L(v_1)-L(v_2)\| \leq \frac{a}{1-K} \|F(v_1)-F(v_2)\|\quad \forall\, v_1,v_2\in X.
\end{align*}
\end{lmm}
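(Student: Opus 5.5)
The plan is to reduce bijectivity of $F$ to a contraction mapping argument on the Banach space $Y$, using $L$ to transport everything from $X$ to $Y$. Since $L$ is bijective, every $v\in X$ is uniquely of the form $v=L^{-1}(w)$ with $w\in Y$. The first step is the \emph{a priori bound}, which also yields injectivity. For $v_1,v_2\in X$, the triangle inequality together with the nearness assumption gives
\begin{align*}
\|L(v_1)-L(v_2)\| \leq \|L(v_1)-L(v_2)-a[F(v_1)-F(v_2)]\| + a\|F(v_1)-F(v_2)\| \leq K\|L(v_1)-L(v_2)\| + a\|F(v_1)-F(v_2)\|.
\end{align*}
Since $K<1$, rearranging gives the claimed estimate $\|L(v_1)-L(v_2)\|\leq \frac{a}{1-K}\|F(v_1)-F(v_2)\|$. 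If $F(v_1)=F(v_2)$, then $L(v_1)=L(v_2)$, and injectivity of $L$ forces $v_1=v_2$; hence $F$ is injective.

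The remaining step, surjectivity, is the only place where completeness of $Y$ enters, and it is the main point of the argument. Fix $g\in Y$ and define $T:Y\to Y$ by
\begin{align*}
T(w) := w - a\bigl[F(L^{-1}(w)) - g\bigr].
\end{align*}
Writing $v_i := L^{-1}(w_i)$, so that $w_i = L(v_i)$, we get
\begin{align*}
T(w_1)-T(w_2) = L(v_1)-L(v_2) - a[F(v_1)-F(v_2)].
\end{align*}
The nearness hypothesis therefore gives $\|T(w_1)-T(w_2)\|\leq K\|w_1-w_2\|$, so $T$ is a strict contraction on the complete metric space $Y$.

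By the Banach fixed point theorem, $T$ has a fixed point $w^*$. At this fixed point $a[F(L^{-1}(w^*))-g]=0$, and since $a>0$ this means $F(v^*)=g$ with $v^*:=L^{-1}(w^*)$. Thus $F$ is surjective, and combined with the first step, $F$ is bijective.

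The only subtle points are that $X$ carries no structure at all, which the pullback through $L^{-1}$ circumvents, and that $a\neq 0$ is needed to read off $F(v^*)=g$ from the fixed point equation.
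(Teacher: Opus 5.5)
Your proposal is correct and is essentially the argument the paper has in mind: the paper gives no proof of its own, stating only that the lemma is a direct consequence of Banach's fixed point theorem and referring to \cite{HMS26}. Your contraction $w\mapsto w-a\bigl[F(L^{-1}(w))-g\bigr]$ on $Y$, together with the triangle-inequality derivation of the a priori bound and injectivity, is exactly that argument.
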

The above lemma is a direct consequence of Banach's fixed point theorem; see, e.g., \cite{HMS26} for a short proof. In view of \eqref{gam A}, we consider the renormalized operator
\begin{align}\label{ren}
\tilde F_\eps : W(\Omega)\rightarrow L^n(\Omega),\quad v\mapsto \inf_{\alpha\in \mathcal{A}} \sup_{\beta\in \mathcal{B}} \{-\gamma^{\eps}_{\alpha\beta}A^{\eps}_{\alpha\beta}:D^2 v - \gamma^{\eps}_{\alpha\beta}f_{\alpha\beta}\}
\end{align}
which we think of as a perturbed negative Laplacian, i.e., $\tilde F_\eps = -\Delta + T_\eps$, where
\begin{align*}
    T_\eps: W(\Omega)\rightarrow L^n(\Omega),\quad v\mapsto \inf_{\alpha\in \mathcal{A}} \sup_{\beta\in \mathcal{B}} \{(I_n-\gamma^{\eps}_{\alpha\beta}A^{\eps}_{\alpha\beta}):D^2 v - \gamma^{\eps}_{\alpha\beta}f_{\alpha\beta}\}.
\end{align*}
We can show that the perturbation $T_\eps$ is small in the following sense.
\begin{lmm}[nearness to the negative Laplacian]\label{Lmm: near Delta}
    Let $\Omega \subset \R^n$ be a bounded Lipschitz domain satisfying \eqref{Om ass}, let $\mathcal{A},\mathcal{B}$ be compact metric spaces, $f\in C(\overline{\Omega}\times \mathcal{A}\times \mathcal{B})$, and $A\in M_\delta$ for some $\delta\in (\delta_0,1]$. Further, let $\tilde F_\eps:W(\Omega)\rightarrow L^n(\Omega)$ be the operator defined in \eqref{ren}. Then, for any $v_1,v_2\in W(\Omega)$, we have the bound
    \begin{align}\label{near to neg Lap}
        \|-\Delta(v_1-v_2) - [\tilde F_\eps(v_1)-\tilde F_\eps(v_2)]\|_{L^n(\Omega)}\leq c_n^{-1}\sqrt{1-\delta}\, \|\Delta (v_1-v_2)\|_{L^n(\Omega)}. 
    \end{align}
    In particular, $\tilde F_\eps$ is near $-\Delta:W(\Omega)\rightarrow L^n(\Omega)$.
\end{lmm}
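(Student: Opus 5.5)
Since $\tilde F_\eps = -\Delta + T_\eps$, the quantity on the left of \eqref{near to neg Lap} is exactly $\|T_\eps(v_1)-T_\eps(v_2)\|_{L^n(\Omega)}$. The plan is therefore to bound $T_\eps(v_1)-T_\eps(v_2)$ pointwise, integrate, and then use the definition of $c_n$ to pass from $D^2$ to $\Delta$.

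The first step is the elementary inequality that for bounded families $\{a_{\alpha\beta}\}$, $\{b_{\alpha\beta}\}$ of reals,
\begin{align*}
\Bigl\lvert \inf_{\alpha}\sup_{\beta} a_{\alpha\beta} - \inf_{\alpha}\sup_{\beta} b_{\alpha\beta}\Bigr\rvert \leq \sup_{\alpha,\beta}\lvert a_{\alpha\beta}-b_{\alpha\beta}\rvert .
\end{align*}
This holds because both $\sup_\beta$ and $\inf_\alpha$ are $1$-Lipschitz with respect to the sup-norm. We apply it a.e.\ in $\Omega$ with
\begin{align*}
a_{\alpha\beta} = (I_n-\gamma^\eps_{\alpha\beta}A^\eps_{\alpha\beta}):D^2v_1 - \gamma^\eps_{\alpha\beta} f_{\alpha\beta},
\end{align*}
and $b_{\alpha\beta}$ defined in the same way with $v_2$. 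The $f$-terms cancel in the difference, and Cauchy--Schwarz for the Frobenius product together with \eqref{gam A} gives
\begin{align*}
\lvert T_\eps(v_1)-T_\eps(v_2)\rvert \leq \sup_{\alpha,\beta}\lvert I_n-\gamma^\eps_{\alpha\beta}A^\eps_{\alpha\beta}\rvert\,\lvert D^2(v_1-v_2)\rvert \leq \sqrt{1-\delta}\,\lvert D^2(v_1-v_2)\rvert
\end{align*}
a.e.\ in $\Omega$.

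The one technical point is measurability: we need $T_\eps(v)$ to be a measurable function lying in $L^n(\Omega)$, so that $T_\eps$ is well-defined. Compactness of $\mathcal{A}$ and $\mathcal{B}$, together with continuity of $A$, $\gamma$ and $f$ in $(\alpha,\beta)$, lets us restrict the inf and sup to countable dense subsets. Measurability then follows, and boundedness of the coefficients gives integrability. This is routine but is the step to state carefully.

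Next, we take the $L^n$ norm of the pointwise bound to obtain $\|T_\eps(v_1)-T_\eps(v_2)\|_{L^n}\leq\sqrt{1-\delta}\,\|D^2(v_1-v_2)\|_{L^n}$. Since $v_1-v_2\in W(\Omega)$, the definition of $c_n$ gives $\|D^2(v_1-v_2)\|_{L^n}\leq c_n^{-1}\|\Delta(v_1-v_2)\|_{L^n}$, which yields \eqref{near to neg Lap}.

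Finally, for nearness in the sense of Lemma \ref{Lmm: near}, take $L=-\Delta$, which is bijective by \eqref{Om ass}, and $a=1$. Then $K=c_n^{-1}\sqrt{1-\delta}$, and the condition $K<1$ is equivalent to $1-\delta<c_n^2$, i.e., $\delta>\delta_0$, which holds by assumption.
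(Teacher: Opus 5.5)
Your proposal is correct and follows the paper's proof essentially verbatim. You establish the pointwise bound $\lvert T_\eps(v_1)-T_\eps(v_2)\rvert\le\sqrt{1-\delta}\,\lvert D^2(v_1-v_2)\rvert$ via the $1$-Lipschitz property of $\inf\sup$ and \eqref{gam A}, then apply the Miranda--Talenti-type bound \eqref{MT} coming from the definition of $c_n$, and note that $c_n^{-1}\sqrt{1-\delta}<1$ is equivalent to $\delta>\delta_0$; your extra remark on measurability via countable dense subsets is a detail the paper leaves to \cite{KS21}.
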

\begin{proof}
    Let $v_1,v_2\in W(\Omega)$. Arguing as in the proof of \cite[Lemma 2.4]{KS21}, we obtain
    \begin{align*}
        \lvert -\Delta(v_1-v_2) &- [\tilde F_\eps(v_1)-\tilde F_\eps(v_2)]\rvert  = \left\lvert T_\eps(v_1) - T_\eps(v_2)  \right\rvert \\
        &\leq \sup_{(\alpha,\beta)\in \mathcal{A}\times \mathcal{B}} \left\lvert   (I_n-\gamma^{\eps}_{\alpha\beta}A^{\eps}_{\alpha\beta}):D^2 (v_1-v_2)\right\rvert \leq \sqrt{1-\delta}\, \lvert D^2(v_1-v_2)\rvert
    \end{align*}
    almost everywhere in $\Omega$. In combination with the Miranda--Talenti-type bound
    \begin{align}\label{MT}
        \|D^2 v\|_{L^n(\Omega)}\leq c_n^{-1} \|\Delta v\|_{L^n(\Omega)}\qquad \forall\, v\in W(\Omega)
    \end{align}
    and the observation that $c_n^{-1}\sqrt{1-\delta} < 1$ (as $\delta > \delta_0 = 1-c_n^2$), we obtain \eqref{near to neg Lap}.
\end{proof}

We are now in a position to prove the main result of this section.
\begin{thrm}[$W^{2,n}$-solvability of Isaacs equations]\label{Thm: W2n}
    Let $\Omega \subset \R^n$ be a bounded Lipschitz domain satisfying \eqref{Om ass}, let $\mathcal{A},\mathcal{B}$ be compact metric spaces, $f\in C(\overline{\Omega}\times \mathcal{A}\times \mathcal{B})$, and $A\in M_\delta$ for some $\delta\in (\delta_0,1]$. Then, there exists a unique solution $u_\eps\in W(\Omega)$ to the Isaacs problem \eqref{ueps prob} and we have the bound
    \begin{align*}
        \sup_{\eps > 0}\;\lvert  u_\eps\rvert_{W^{2,n}(\Omega)} \leq \frac{\|\gamma\|_{L^\infty(\mathbb{R}^n\times \mathcal{A}\times \mathcal{B})}}{c_n-\sqrt{1-\delta}}\,\left\| \sup_{(\alpha,\beta)\in \mathcal{A}\times \mathcal{B}} \lvert f_{\alpha\beta}\rvert \right\|_{L^n(\Omega)}.
    \end{align*}
\end{thrm}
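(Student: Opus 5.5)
Combine Lemma \ref{Lmm: near Delta} with Lemma \ref{Lmm: near}, taking $X = W(\Omega)$, $Y = L^n(\Omega)$, $L = -\Delta$ (a bijection by \eqref{Om ass}), $F = \tilde F_\eps$, $a=1$ and $K = c_n^{-1}\sqrt{1-\delta}\in[0,1)$. First check that $\tilde F_\eps$ and $F_\eps$ map $W(\Omega)$ into $L^n(\Omega)$. Since $A$, $f$ are continuous on compact sets (in $y$ by periodicity) and uniformly elliptic, $\gamma$ is continuous, positive, bounded and bounded away from zero. The inf-sup of a family of Carath\'eodory-type functions over compact metric spaces is measurable, and it is pointwise bounded by $\sup|I_n-\gamma A|\,|D^2v| + \|\gamma\|_\infty \sup_{\alpha\beta}|f_{\alpha\beta}|$, so it lies in $L^n$. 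Lemma \ref{Lmm: near} then shows that $\tilde F_\eps$ is bijective, with
\begin{align*}
\|\Delta(v_1-v_2)\|_{L^n(\Omega)}\le \frac{1}{1-c_n^{-1}\sqrt{1-\delta}}\,\|\tilde F_\eps(v_1)-\tilde F_\eps(v_2)\|_{L^n(\Omega)}.
\end{align*}

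Next, show that $F_\eps(v)=0$ iff $\tilde F_\eps(v)=0$ a.e. This is the standard sign argument of \cite{KS21}, since $\gamma>0$ is bounded above and below. If $\inf_\alpha\sup_\beta g_{\alpha\beta}=0$, then $\inf_\alpha\sup_\beta \gamma_{\alpha\beta}g_{\alpha\beta}=0$ pointwise:
\begin{itemize}
\item for each $\alpha$, $\sup_\beta g_{\alpha\beta}\ge0$; by compactness and continuity there is $\beta$ with $g_{\alpha\beta}\ge0$, hence $\gamma g\ge 0$;
\item some minimizing $\alpha$ (which exists by compactness and continuity) gives $\sup_\beta g_{\alpha\beta}=0$, hence $\sup_\beta \gamma g_{\alpha\beta}=0$.
\end{itemize}
The converse follows symmetrically using $1/\gamma$. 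Hence existence and uniqueness of $u_\eps$ follow from the surjectivity and injectivity of $\tilde F_\eps$ at the value $0$.

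For the bound, take $v_1=u_\eps$ and $v_2=0$, so $\tilde F_\eps(u_\eps)=0$ and $\tilde F_\eps(0)=\inf_\alpha\sup_\beta\{-\gamma^\eps_{\alpha\beta} f_{\alpha\beta}\}$. Then $|\tilde F_\eps(0)|\le \|\gamma\|_{L^\infty}\sup_{\alpha\beta}|f_{\alpha\beta}|$ pointwise, and this majorant is independent of $\eps$. Combining with \eqref{MT},
\begin{align*}
|u_\eps|_{W^{2,n}}\le c_n^{-1}\|\Delta u_\eps\|_{L^n}\le \frac{c_n^{-1}}{1-c_n^{-1}\sqrt{1-\delta}}\|\tilde F_\eps(0)\|_{L^n}=\frac{\|\gamma\|_{L^\infty}}{c_n-\sqrt{1-\delta}}\Big\|\sup_{\alpha\beta}|f_{\alpha\beta}|\Big\|_{L^n},
\end{align*}
which is the claimed estimate, uniform in $\eps$.

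The main obstacle is the measurability and well-definedness of the inf-sup operator into $L^n$, together with the equivalence of the renormalized and original equations. Both rely on the continuity and compactness assumptions, and I would handle them by reducing the inf-sup to countable dense subsets of $\mathcal{A}$ and $\mathcal{B}$.
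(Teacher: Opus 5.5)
Your proposal is correct and follows the paper's proof. You apply Lemma \ref{Lmm: near} with $L=-\Delta$ and $F=\tilde F_\eps$ via Lemma \ref{Lmm: near Delta}, transfer solvability to \eqref{ueps prob} using positivity of $\gamma$, and obtain the bound from $v_1=u_\eps$, $v_2=0$ together with \eqref{MT}.

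There are only two minor differences. First, you write out the sign argument that the paper imports from \cite{KS21}, and you prove only that $F_\eps$ and $\tilde F_\eps$ have the same zeros rather than that $F_\eps$ is bijective; the zero-set equivalence is all the statement needs. Second, the pointwise majorant you quote is really the one for $T_\eps(v)$. For $\tilde F_\eps(v)$ use $\sup\lvert\gamma A\rvert\,\lvert D^2 v\rvert+\|\gamma\|_\infty\sup_{\alpha,\beta}\lvert f_{\alpha\beta}\rvert$ instead, which changes nothing in the argument.
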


\begin{proof}
    In view of \eqref{Om ass}, we can combine Lemma \ref{Lmm: near} and Lemma \ref{Lmm: near Delta} to obtain that $\tilde F_\eps:W(\Omega)\rightarrow L^n(\Omega)$ defined in \eqref{ren} is a bijection and we have that
\begin{align}\label{tilde bd}
    \|\Delta(v_1-v_2)\|_{L^n(\Omega)} \leq \frac{\|\tilde F_\eps(v_1) - \tilde F_\eps(v_2)\|_{L^n(\Omega)}}{1-c_n^{-1}\sqrt{1-\delta}}
\end{align}
for any $v_1,v_2\in W(\Omega)$. Since $\gamma$ is positive and $\mathcal{A}$ and $\mathcal{B}$ are compact metric spaces, it follows from the proof of \cite[Lemma 2.2]{KS21} that $F_\eps:W(\Omega)\rightarrow L^n(\Omega)$ defined in \eqref{op Feps} is a bijection. Hence, the problem \eqref{ueps prob} has a unique solution $u_\eps\in W(\Omega)$. Using \eqref{tilde bd} and the fact that $F_\eps(u_\eps) = 0$ a.e. in $\Omega$ implies $\tilde F_\eps(u_\eps) = 0$ a.e. in $\Omega$,
    \begin{align*}
        \left(1-c_n^{-1}\sqrt{1-\delta} \right)\|\Delta u_{\eps}\|_{L^n(\Omega)} \leq \|\tilde F_\eps(0) - \tilde F_\eps(u_{\eps})\|_{L^n(\Omega)}= \|\tilde F_\eps(0)\|_{L^n(\Omega)}.
    \end{align*}
    Using the definition of $\tilde F_\eps$ and the fact that $\gamma\in L^\infty(\mathbb{R}^n\times \mathcal{A}\times \mathcal{B})$, we obtain that
    \begin{align*}
       c_n\|D^2 u_{\eps}\|_{L^n(\Omega)} \leq  \|\Delta u_{\eps}\|_{L^n(\Omega)}\leq \frac{\|\gamma\|_{L^\infty(\mathbb{R}^n\times \mathcal{A}\times \mathcal{B})}}{1-c_n^{-1}\sqrt{1-\delta}}\left\| \sup_{(\alpha,\beta)\in \mathcal{A}\times \mathcal{B}} \lvert f_{\alpha\beta}\rvert \right\|_{L^n(\Omega)},
    \end{align*}
    where we have used \eqref{MT} in the first inequality. 
\end{proof}

We note that $\lvert\, \cdot\, \rvert_{W^{2,n}(\Omega)} := \|D^2(\cdot)\|_{L^n(\Omega)}$ defines a norm on $W(\Omega)=W^{2,n}(\Omega)\cap W^{1,n}_0(\Omega)$. We conclude with two remarks.

\begin{rmrk}
    The unique solution $u_\eps\in W(\Omega)$ to \eqref{ueps prob} given by Theorem \ref{Thm: W2n} is a viscosity solution to \eqref{ueps prob}; see \cite{CCK96}.
\end{rmrk}

\begin{rmrk}\label{Rk: C1theta}
    Let the setting be as in Theorem \ref{Thm: W2n}, and suppose that, for some $s>0$, the map $-\Delta:W^{2,n+s}(\Omega)\cap W^{1,n+s}_0(\Omega)\rightarrow L^{n+s}(\Omega)$ is a bijection. For $p\in [1,\infty)$, set
    \begin{align}\label{c of p defn}
        c(p) :=  \inf_{v\in W^{2,p}(\Omega)\cap W^{1,p}_0(\Omega)\backslash\{0\}} \frac{\|\Delta v\|_{L^p(\Omega)}}{\|D^2 v\|_{L^p(\Omega)}},
    \end{align}
    and recall that $c(n) = c_n > \sqrt{1-\delta}$. Then, if $s>0$ is sufficiently small, $c(n+s) > \sqrt{1-\delta}$. Similarly to Lemma \ref{Lmm: near Delta} and Theorem \ref{Thm: W2n}, we obtain $\tilde F_\eps$ is near $-\Delta$ as maps $W^{2,n+s}(\Omega)\cap W^{1,n+s}_0(\Omega)\rightarrow L^{n+s}(\Omega)$, and the solution $u_\eps$ to \eqref{ueps prob} given by Theorem \ref{Thm: W2n} satisfies $u_\eps\in W^{2,n+s}(\Omega)$ and
    \begin{align}\label{unf w2pbd}
        \sup_{\eps > 0}\;\lvert  u_\eps\rvert_{W^{2,n+s}(\Omega)} \leq \frac{\|\gamma\|_{L^\infty(\mathbb{R}^n\times \mathcal{A}\times \mathcal{B})}}{c(n+s)-\sqrt{1-\delta}}\,\left\| \sup_{(\alpha,\beta)\in \mathcal{A}\times \mathcal{B}} \lvert f_{\alpha\beta}\rvert \right\|_{L^{n+s}(\Omega)}.
    \end{align}
    In particular, $(u_{\eps})_{\eps > 0}$ is uniformly bounded in $W^{2,n+s}(\Omega)$, and hence, also in $C^{1,\theta}(\overline{\Omega})$ for $\theta\in (0,\frac{s}{n+s})$.
\end{rmrk}

\section{Homogenization}\label{Sec: 3}

\subsection{Setting and objective}\label{Sec: setting hom}

Let the setting be as in Remark \ref{Rk: C1theta}. Then, the Isaacs problem \eqref{ueps prob} has a unique solution $u_{\eps}\in W^{2,p}(\Omega)\cap W^{1,p}_0(\Omega)$ and $(u_\eps)_{\eps > 0}$ is uniformly bounded in $W^{2,p}(\Omega)$, where
\begin{align*}
    p>n.
\end{align*}
Then, there exists $u\in W^{2,p}(\Omega)\cap W^{1,p}_0(\Omega)$ such that, upon passing to a subsequence,
\begin{align}\label{conv ueps}
\begin{split}
    u_\eps \weak u\;\;\text{weakly in }W^{2,p}(\Omega),\quad
    u_\eps \rightarrow u\;\;\text{strongly in }C^{1,\theta}(\overline{\Omega})\quad\text{ as }\eps\searrow 0
    \end{split}
\end{align}
for $\theta\in (0,1-\frac{n}{p})$. As our starting point, we use the observation that $u_\eps$ is the unique solution in $W^{2,p}(\Omega)\cap W^{1,p}_0(\Omega)$ to the renormalized Isaacs problem
\begin{align}\label{H is zero}
\begin{split}
    H\left(\,\cdot\,,\,\frac{\cdot}{\eps}\,,D^2 u_\eps\right) &= 0\quad\text{in }\Omega,\\
    u_\eps &= 0\quad\text{on }\partial\Omega,
\end{split}
\end{align}
where the Hamiltonian $H$ is given by
\begin{align*}
   H:\overline{\Omega}\times \R^n\times \R^{n\times n}_{\mathrm{sym}}\rightarrow \R,\quad H(x,y,X) := \inf_{\alpha\in \mathcal{A}} \sup_{\beta\in \mathcal{B}} \{\gamma_{\alpha\beta}(y)\left(-A_{\alpha\beta}(y):X - f_{\alpha\beta}(x)\right)\}.
\end{align*}
We aim to show that $u$ is the unique solution in $W^{2,p}(\Omega)\cap W^{1,p}_0(\Omega)$ to the problem
\begin{align*}
\overline{H}(\,\cdot\,,D^2 u) &= 0\quad\text{in }\Omega,\\
    u &= 0\quad\text{on }\partial\Omega
\end{align*}
with an appropriately defined effective Hamiltonian $\overline{H}:\overline{\Omega}\times \R^{n\times n}_{\mathrm{sym}}\rightarrow \R$, and to quantify the rate of convergence. Moreover, we would like to construct the effective Hamiltonian without making any additional regularity assumptions, and in a way that reveals a natural finite element framework for its numerical approximation.

\subsection{Effective Hamiltonian via true cell problem}\label{Sec: 3.2}

Writing $L^p_0(\mathbb{T}^n) := \{v\in L^p(\mathbb{T}^n):\int_{\mathbb{T}^n} v = 0\}$, we can prove the existence and uniqueness of $W^{2,p}(\mathbb{T}^n)\cap L^p_0(\mathbb{T}^n)$ solutions to the true cell problem provided that the Cordes parameter $\delta$ is sufficiently large. For the quantification of how large the Cordes parameter needs to be, the following quantity shall be helpful:
\begin{align}\label{xi of p}
    \xi(p) := \inf_{\eta\in L^p_0(\T^n)\backslash\{0\}} \frac{\inf_{r\in \mathbb{P}_0(\T^n)} \|\eta - r\|_{L^p(\T^n)}}{\|\eta\|_{L^p(\T^n)}}.
\end{align}
Here, $\mathbb{P}_0(\T^n)$ denotes the set of constant real-valued functions on $\T^n$.
\begin{lmm}[explicit formula for $\xi(p)$]\label{Lmm: xi expl form}
    Let $\xi(p)$ denote the constant defined in \eqref{xi of p} and let $q := \frac{p}{p-1}$ denote the dual exponent. Then,
    \begin{align}\label{xi formula}
    \xi(p) = \left\| g\mapsto g-\int_{\T^n} g\right\|_{L^p(\T^n) \rightarrow L^p(\T^n)}^{-1}=\inf_{t\in (0,\infty)} \frac{1+t}{(1+t^{p-1})^{\frac{1}{p}}(1+t^{q-1})^{\frac{1}{q}}}.
    \end{align}
    In particular, $\xi(p) = \xi(q) \in \left[2^{-\lvert 1 - \frac{2}{p}\rvert},1\right]\subset (\frac{1}{2},1]$. Some explicit values are $\xi(2) = 1$, $\xi(3) = 0.91\ldots$, and $\xi(4) =   0.82\ldots$.
\end{lmm}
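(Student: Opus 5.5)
The plan is to prove the two equalities in \eqref{xi formula} separately, and then read off the symmetry and the bounds. Throughout, write $P g := g - \int_{\T^n} g$, which is a bounded linear map on $L^p(\T^n)$ with range $L^p_0(\T^n)$, kernel $\mathbb{P}_0(\T^n)$, and $P\eta=\eta$ for $\eta\in L^p_0(\T^n)$.

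\emph{First equality (duality of the two formulations).} Let $\eta\in L^p_0(\T^n)\backslash\{0\}$ and $r\in\mathbb{P}_0(\T^n)$, and set $g:=\eta-r$. Then $Pg=\eta$, so $\|\eta\|_{L^p}\le \|P\|\,\|\eta-r\|_{L^p}$. Taking the infimum over $r$ and then over $\eta$ gives $\xi(p)\ge \|P\|^{-1}$. Conversely, let $g\in L^p(\T^n)$ with $Pg\neq 0$, and put $\eta:=Pg\in L^p_0(\T^n)$. Choosing the constant $r:=-\int g$ gives $\eta-r=g$, so $\xi(p)\|Pg\|_{L^p}\le \inf_r\|\eta-r\|_{L^p}\le\|g\|_{L^p}$, i.e.\ $\xi(p)\le\|P\|^{-1}$. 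Since $P$ is self-adjoint with respect to the $L^2$ pairing, $\|P\|_{L^p\to L^p}=\|P\|_{L^q\to L^q}$, which yields $\xi(p)=\xi(q)$.

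\emph{Second equality (computing $\|P\|_{L^p\to L^p}$).} This is the main obstacle. It amounts to Franchetti's computation of the norm of $I-E$, with $E$ the expectation on a nonatomic probability space. I would either invoke that result directly or reproduce it as follows. First, the norm depends only on the distribution of $g$, since $\T^n$ with Lebesgue measure is a nonatomic probability space. By the first part,
\begin{align*}
\xi(p)=\inf\Big\{\tfrac{\|h\|_{L^p}}{\|h-\int h\|_{L^p}} : h\neq 0,\ \textstyle\int |h|^{p-2}h=0\Big\},
\end{align*}
because $r$ is the best constant for $\eta$ exactly when $\int|\eta-r|^{p-2}(\eta-r)=0$. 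One may normalize $\|h\|_{L^p}=1$ and $\int|h|^{p-2}h=0$; these are linear constraints on the law of $h$. With $\int h$ also fixed, the quantity $\|h-\int h\|_{L^p}^p$ is a linear functional of the law. A Carath\'eodory/extreme-point argument then reduces the problem to laws with few atoms. A further convexity step in the free parameter $\int h$ should reduce it to two-valued $h$, namely $h=a$ on a set of measure $\lambda$ and $h=-b$ on its complement, with $\lambda a^{p-1}=(1-\lambda)b^{p-1}$; this last step is the delicate part. An explicit computation with the substitution $t=a/b$ (eliminating $\lambda$ through the constraint) should then produce the expression $\frac{1+t}{(1+t^{p-1})^{1/p}(1+t^{q-1})^{1/q}}$, and minimizing over $t\in(0,\infty)$ gives the formula. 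The form is symmetric under $p\leftrightarrow q$, which provides a consistency check.

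\emph{Bounds and values.} Choosing $t=1$ gives the value $2/(2^{1/p}2^{1/q})=1$, so $\xi(p)\le 1$; alternatively, $\|P\|\ge1$ because $P$ is a nonzero projection. For the lower bound, note that $\|P\|_{L^1\to L^1}\le 2$, $\|P\|_{L^\infty\to L^\infty}\le 2$ and $\|P\|_{L^2\to L^2}=1$ (orthogonal projection). Riesz--Thorin interpolation between $2$ and $1$ or $\infty$ then gives $\|P\|_{L^p\to L^p}\le 2^{\lvert 1-\frac{2}{p}\rvert}$, hence $\xi(p)\ge 2^{-\lvert 1-\frac2p\rvert}>\frac12$. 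The values $\xi(2)=1$, $\xi(3)=0.91\ldots$ and $\xi(4)=0.82\ldots$ follow by numerically minimizing the one-variable function in \eqref{xi formula}.
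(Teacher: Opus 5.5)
Your proof is correct, and for the substantive part it follows the paper. The first equality is the same observation the paper uses: for fixed $\eta\in L^p_0(\T^n)$, the functions $\eta-r$ with $r\in\mathbb{P}_0(\T^n)$ are exactly the preimages of $\eta$ under $P$. The paper writes this as a single chain of suprema rather than two inequalities. For the explicit formula, the paper only cites \cite{SS23}, which corresponds to your option of invoking Franchetti's theorem.

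You differ from the paper in the consequences. The paper also takes $\xi(p)=\xi(q)$ and the bounds $2^{-\lvert 1-2/p\rvert}\le\xi(p)\le 1$ from the reference. You instead derive the symmetry from $P^\ast=P$, which gives $\|P\|_{L^p\to L^p}=\|P\|_{L^q\to L^q}$. You obtain the bounds from $P$ being a nonzero projection and from Riesz--Thorin between $\|P\|_{L^2\to L^2}=1$ and $\|P\|_{L^1\to L^1},\|P\|_{L^\infty\to L^\infty}\le 2$. These arguments are self-contained and do not depend on the explicit formula, which is a real gain over the paper's presentation.

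Your ``reproduce it'' sketch, however, is not a proof. The Carath\'eodory/moment step only reduces to laws with finitely many atoms. Collapsing these to two atoms is precisely the content of Franchetti's theorem, so the citation is what actually carries that step.

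There is also a small slip in the two-point computation. With $t=a/b$ you obtain the displayed expression evaluated at $t^{p-1}$, not at $t$. The natural variable is $(1-\lambda)/\lambda=(a/b)^{p-1}$, the ratio of the absolute values of the two values of $h-\int h$. Since $t\mapsto t^{p-1}$ is a bijection of $(0,\infty)$, the infimum is unaffected.
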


\begin{proof}
Introducing $T:L^p(\T^n) \rightarrow L^p(\T^n)$ given by $T(g):=g - \int_{\T^n} g$, we have that
\begin{align*}
    (\xi(p))^{-1} &= \sup_{\eta\in L^p_0(\T^n)\backslash\{0\}} \frac{\|\eta\|_{L^p(\T^n)}}{\inf_{r\in \mathbb{P}_0(\T^n)} \|\eta - r\|_{L^p(\T^n)}} \\
    &= \sup_{\eta\in L^p_0(\T^n)\backslash\{0\}} \sup_{\substack{
    g\in L^p(\T^n)\backslash\{0\}\\
    T(g)=\eta
    }} \frac{\|\eta\|_{L^p(\T^n)}}{\|g\|_{L^p(\T^n)}} 
    = \sup_{g\in L^p(\T^n)\backslash\{0\}} \frac{\|T(g)\|_{L^p(\T^n)}}{\|g\|_{L^p(\T^n)}},
\end{align*}
which yields the first equality in \eqref{xi formula}. The remaining claims follow from \cite{SS23}. 
\end{proof}

Let us further introduce
\begin{align}\label{cpTn}
    c_{\T^n}(p) := \inf_{v\in W_p\backslash\{0\}} \frac{\|\Delta v\|_{L^p(\T^n)}}{\|D^2 v\|_{L^p(\T^n)}},
\end{align}
and note that $c_{\T^n}(p) \geq c(p)$; see Lemma \ref{Lmm: cp Om leq Tn} in the appendix. 
This constant is known explicitly in dimension $n = 2$:

\begin{rmrk}\label{Rk: expl c}
    In dimension $n = 2$, we have $c_{\T^2}(p) = \sqrt{ 2(1+(p-1)^2)^{-1}}$ for $p\geq 2$. Indeed, for $v\in W_p$ with $p\geq 2$, it follows from \cite{AAB26,BSV13} that 
    \begin{align*}
        \|2\lvert D^2 v\rvert^2 - (\Delta v)^2\|_{L^{\frac{p}{2}}(\T^2)} = \| (\partial_{11}^2 v - \partial_{22}^2 v)^2 + 4(\partial^2_{12}v)^2\|_{L^{\frac{p}{2}}(\T^2)} \leq (p-1)^2\|\Delta v\|_{L^p(\T^2)}^2,
    \end{align*}
    and thus, $2\|D^2 v\|_{L^p(\T^2)}^2 \leq (1+(p-1)^2)\|\Delta v\|_{L^p(\T^2)}^2$, and the constant is sharp.
\end{rmrk}

We can now state and prove the main result of this subsection.

\begin{thrm}[$W^{2,p}$-solvability of the true cell problem]\label{Thm: cell}
    Let the situation be as described in Section \ref{Sec: setting hom}. With $\xi(p)$ and $c_{\T^n}(p)$ defined in \eqref{xi of p} and \eqref{cpTn}, respectively, suppose that $\delta > 1 - \left(\xi(p)c_{\T^n}(p)\right)^2$. Then, for any pair $(x,X)\in \overline{\Omega}\times \R^{n\times n}_{\mathrm{sym}}$ there exists a unique solution $v(\,\cdot\,;x,X)\in W^{2,p}(\mathbb{T}^n)\cap L^p_0(\mathbb{T}^n)$ to the true cell problem
    \begin{align}\label{cell}
        H(x,\,\cdot\,,X+D^2 v(\,\cdot\,;x,X)) = \int_{\mathbb{T}^n} H(x,\,\cdot\,,X+D^2 v(\,\cdot\,;x,X))\quad\text{a.e. in }\mathbb{T}^n,
    \end{align}
    and we have the bound
    \begin{align}\label{v w2p bd}
        \lvert v(\,\cdot\,;x,X)\rvert_{W^{2,p}(\T^n)} \leq \frac{\| H(x,\,\cdot\,,X)\|_{L^p(\T^n)}}{\xi(p)\, c_{\T^n}(p)-\sqrt{1-\delta} }.
    \end{align}
    
\end{thrm}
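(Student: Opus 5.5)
We apply Campanato's Lemma \ref{Lmm: near} on $X = W_p := W^{2,p}(\T^n)\cap L^p_0(\T^n)$ and $Y = L^p_0(\T^n)$. We take $L := -\Delta$ and
\begin{align*}
    G(v) := H(x,\cdot,X+D^2 v) - \int_{\T^n} H(x,\cdot,X+D^2 v).
\end{align*}
Here $(x,X)$ is fixed, and $W_p$ denotes the space used in \eqref{cpTn}. Since $-\Delta : W_p \to L^p_0(\T^n)$ is a bijection by Fourier analysis and $L^p$-theory on the torus, a zero of $G$ is exactly a solution of \eqref{cell}. So it suffices to show that $G$ is near $-\Delta$ with $a=1$, and then apply the lemma with $v_2 = 0$.

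First we check that $G$ is well defined. $H(x,\cdot,X+D^2v)$ lies in $L^p(\T^n)$ because $\gamma$, $A$, $f$ are bounded and continuous and $\mathcal{A},\mathcal{B}$ are compact; measurability follows as in \cite{KS21}. Hence $G(v)\in L^p_0(\T^n)$.

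For the nearness estimate, fix $v_1,v_2\in W_p$ and put $w := v_1 - v_2$. Write
\begin{align*}
    \eta := -\Delta w - [H(x,\cdot,X+D^2v_1) - H(x,\cdot,X+D^2v_2)].
\end{align*}
Exactly as in Lemma \ref{Lmm: near Delta}, using \eqref{gam A}, we get $\lvert \eta\rvert \le \sqrt{1-\delta}\,\lvert D^2 w\rvert$ a.e. Since $\int_{\T^n}\Delta w = 0$, we have $-\Delta w - [G(v_1)-G(v_2)] = \eta - \int_{\T^n}\eta$. By the first equality in \eqref{xi formula} (the mean-subtraction operator has norm $\xi(p)^{-1}$ on $L^p$) and the definition \eqref{cpTn} of $c_{\T^n}(p)$,
\begin{align*}
    \|-\Delta w - [G(v_1)-G(v_2)]\|_{L^p(\T^n)} \le \xi(p)^{-1}\sqrt{1-\delta}\,\|D^2 w\|_{L^p(\T^n)} \le \frac{\sqrt{1-\delta}}{\xi(p)\,c_{\T^n}(p)}\,\|\Delta w\|_{L^p(\T^n)}.
\end{align*}
The hypothesis on $\delta$ gives $K := \sqrt{1-\delta}/(\xi(p)c_{\T^n}(p)) < 1$. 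Lemma \ref{Lmm: near} therefore yields that $G$ is bijective, which gives existence and uniqueness of the cell solution $v$ with $G(v)=0$.

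For the bound \eqref{v w2p bd}, the lemma with $v_1 = v$ and $v_2 = 0$ gives $\|\Delta v\|_{L^p} \le (1-K)^{-1}\|G(0)\|_{L^p}$. Combining $c_{\T^n}(p)\|D^2v\|_{L^p} \le \|\Delta v\|_{L^p}$ with $\|G(0)\|_{L^p} \le \xi(p)^{-1}\|H(x,\cdot,X)\|_{L^p}$ (again by \eqref{xi formula}), we obtain
\begin{align*}
    \|D^2 v\|_{L^p} \le \frac{\|H(x,\cdot,X)\|_{L^p}}{c_{\T^n}(p)\,\xi(p)\,(1-K)} = \frac{\|H(x,\cdot,X)\|_{L^p}}{\xi(p)c_{\T^n}(p) - \sqrt{1-\delta}},
\end{align*}
which is precisely \eqref{v w2p bd}.

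The main obstacle is the setup. One must verify that $-\Delta$ is a bijection $W_p\to L^p_0(\T^n)$ and that $|\cdot|_{W^{2,p}}$ is a norm on $W_p$, so that $c_{\T^n}(p)>0$. One must also ensure that the pointwise estimate for $\eta$, which relies on the inf-sup structure, survives the mean subtraction; that is exactly where the factor $\xi(p)$ enters.
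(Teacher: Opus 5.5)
Your proof is correct, and it follows a genuinely different (though closely related) route from the paper's. You apply Lemma \ref{Lmm: near} to the strong form of the cell problem. You compare the mean-subtracted Hamiltonian $G:W_p\to L^p_0(\T^n)$ with $-\Delta:W_p\to L^p_0(\T^n)$ in the $L^p$-norm, and $\xi(p)$ enters as the reciprocal of the norm of the mean-subtraction map applied to $\eta$. The paper instead uses the weak (Petrov--Galerkin) form. It compares $\mathcal{C}_1(w)=\int_{\T^n}H(x,\cdot,X+D^2w)(-\Delta\,\cdot)$ with $\mathcal{C}_2(w)=\int_{\T^n}\Delta w\,\Delta(\cdot)$ as maps $W_p\to W_q^\ast$. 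There the perturbation is bounded by $\|\eta\|_{L^p(\T^n)}$ directly, and $\xi(p)$ enters through the lower bound $\|\mathcal{C}_2(w)\|_{W_q^\ast}=\inf_{r\in\mathbb{P}_0(\T^n)}\|\Delta w-r\|_{L^p(\T^n)}\geq\xi(p)\|\Delta w\|_{L^p(\T^n)}$. Since $W_q^\ast$ is isometric to $L^p(\T^n)/\mathbb{P}_0(\T^n)$ with the quotient norm, the two arguments are the same estimate viewed from dual sides. That is why you obtain the identical contraction constant $\sqrt{1-\delta}/(\xi(p)c_{\T^n}(p))$ and the identical bound \eqref{v w2p bd}.

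Your version is shorter and more elementary: it needs only the bijectivity of $-\Delta:W_p\to L^p_0(\T^n)$. The paper must additionally identify the dual norm via \cite[Proposition 11.10]{Bre11} and prove surjectivity of $\mathcal{C}_2$ through a Riesz representation argument. What the paper's formulation buys is the variational structure reused in Section \ref{Sec: 4}. The nearness estimate \eqref{K tilde near} in the $W_q^\ast$-norm is exactly what survives restriction to discrete test spaces in \eqref{nearste2}, yielding the Petrov--Galerkin method \eqref{PG} and its mixed analogue. Your strong-form nearness does not directly provide this.

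One small remark: positivity of $c_{\T^n}(p)$ does not follow from $\lvert\cdot\rvert_{W^{2,p}(\T^n)}$ being a norm on $W_p$. It is, however, implied by the hypothesis $\delta>1-(\xi(p)c_{\T^n}(p))^2$ together with $\delta\leq 1$, so nothing is missing from your argument.
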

\begin{proof}
Let $(x,X)\in \overline{\Omega}\times \R^{n\times n}_{\mathrm{sym}}$ be fixed, and let $q := \frac{p}{p-1}$ denote the dual exponent of $p$. For $r\in \{p,q\}$, we write $W_r := W^{2,r}(\mathbb{T}^n)\cap L^r_0(\mathbb{T}^n)$ and $\|\cdot\|_{\Delta,r} := \|\Delta(\cdot)\|_{L^r(\mathbb{T}^n)}$, and note that $(W_r,\|\cdot\|_{\Delta,r})$ is a Banach space. For $i = 1,2$, we introduce the operator 
\begin{align*}
    \mathcal{C}_i:W_p\rightarrow W_q^\ast&,\quad w\mapsto \mathcal{C}_i(w),
\end{align*}
where the actions of $\mathcal{C}_1(w)$ and $\mathcal{C}_2(w)$ are given by
\begin{align*}
    \langle \mathcal{C}_1(w),z\rangle := \int_{\T^n} (H(x,\cdot,X+D^2 w)) (-\Delta z),\qquad
    \langle \mathcal{C}_2(w),z\rangle := \int_{\T^n} (-\Delta w) (-\Delta z) 
\end{align*}
for $z\in W_q$. Note that $\mathcal{C}_2$ is linear.

\textbf{Step 1.} We show that $\mathcal{C}_1$ is near $\mathcal{C}_2$. To this end, let $v_1,v_2\in W_p$. First, we have
\begin{align}\label{corr pf St1}
\begin{split}
    &\| \mathcal{C}_2(v_1) - \mathcal{C}_2(v_2) - [\mathcal{C}_1(v_1) - \mathcal{C}_1(v_2)] \|_{W_q^\ast} \\&\qquad= \sup_{z\in W_q\backslash\{0\}} \|z\|_{\Delta,q}^{-1}\;\lvert \langle\mathcal{C}_1(v_1) - \mathcal{C}_1(v_2) -\mathcal{C}_2(v_1-v_2),z\rangle\rvert  \\
    &\qquad\leq \|H(x,\cdot,X+D^2 v_1)-H(x,\cdot,X+D^2 v_2) + \Delta(v_1-v_2)\|_{L^p(\T^n)}.
\end{split}
\end{align}
Next, we use the definition of $H$, properties of $\inf$ and $\sup$, and \eqref{gam A} to obtain that
\begin{align}\label{Lip type}
\begin{split}
    &\lvert H(x,\cdot,X+D^2 v_1)-H(x,\cdot,X+D^2 v_2) + \Delta(v_1-v_2) \rvert \\
    &= \left\lvert \inf_{\alpha\in \mathcal{A}} \sup_{\beta\in \mathcal{B}} \left\{ (I_n-\gamma_{\alpha\beta}A_{\alpha\beta}):D^2 v_1 -\gamma_{\alpha\beta}\left(A_{\alpha\beta}:X - f_{\alpha\beta}(x)\right)   \right\}\right.  \\
    &\qquad \left.-\inf_{\alpha\in \mathcal{A}} \sup_{\beta\in \mathcal{B}} \left\{ (I_n-\gamma_{\alpha\beta}A_{\alpha\beta}):D^2 v_2 -\gamma_{\alpha\beta}\left(A_{\alpha\beta}:X - f_{\alpha\beta}(x)\right)   \right\}\right\rvert \\
    &\leq \sup_{(\alpha,\beta)\in \mathcal{A}\times \mathcal{B}} \left\lvert (I_n-\gamma_{\alpha\beta}A_{\alpha\beta}):D^2 (v_1 - v_2) \right\rvert \leq \sqrt{1-\delta}\;\lvert D^2(v_1-v_2)\rvert.
\end{split}
\end{align}
In combination with \eqref{corr pf St1} and \eqref{cpTn}, we deduce that
\begin{align*}
    \| \mathcal{C}_2(v_1) - \mathcal{C}_2(v_2) - [\mathcal{C}_1(v_1) - \mathcal{C}_1(v_2)] \|_{W_q^\ast} &\leq \sqrt{1-\delta}\, \|D^2(v_1-v_2)\|_{L^p(\T^n)}\\
    &\leq \left(c_{\T^n}(p)\right)^{-1} \sqrt{1-\delta}\, \|v_1-v_2\|_{\Delta,p}.
\end{align*}
Thus, writing $\tilde K := \left(c_{\T^n}(p)\right)^{-1} \sqrt{1-\delta}$, we have the bound
\begin{align*}
    \| \mathcal{C}_2(v_1) - \mathcal{C}_2(v_2) - [\mathcal{C}_1(v_1) - \mathcal{C}_1(v_2)] \|_{W_q^\ast} \leq \tilde K\, \|v_1-v_2\|_{\Delta,p}.
\end{align*}
In view of Remark \ref{Rk: C1theta} and $c_{\T^n}(p) \geq c(p)$, we have that $\tilde K\in [0,1)$. Finally,
\begin{align*}
    \|\mathcal{C}_2(v_1) - \mathcal{C}_2(v_2)\|_{W_q^\ast} &= \sup_{z\in W_{q}\backslash\{0\}} \|z\|_{\Delta,q}^{-1} \;\left\lvert \int_{\T^n}\Delta (v_1-v_2)\,\Delta z \right\rvert \\ &= \sup_{g\in L^q_0(\T^n)\backslash\{0\}} \|g\|_{L^q(\T^n)}^{-1} \;\left\lvert \int_{\T^n}\Delta (v_1-v_2)\,g \right\rvert \\ &=  \inf_{r\in \mathbb{P}_0(\T^n)} \|\Delta(v_1-v_2) - r\|_{L^p(\T^n)} \geq \xi(p)\| v_1-v_2 \|_{\Delta,p},
\end{align*}
where we have used \cite[Proposition 11.10]{Bre11} in the third step. Altogether,
\begin{align}\label{K tilde near}
    \| \mathcal{C}_2(v_1) - \mathcal{C}_2(v_2) - [\mathcal{C}_1(v_1) - \mathcal{C}_1(v_2)] \|_{W_q^\ast} \leq \tilde K (\xi(p))^{-1}\, \|\mathcal{C}_2(v_1) - \mathcal{C}_2(v_2)\|_{W_q^\ast}.
\end{align}
Noting that $\tilde K (\xi(p))^{-1} < 1$ as $\delta > 1 - \left(\xi(p)c_{\T^n}(p)\right)^2$, we conclude that $\mathcal{C}_1$ is near $\mathcal{C}_2$ .

\textbf{Step 2.} We show that $\mathcal{C}_2$ is a bijection. Let us begin by showing injectivity. To this end, let $v_1,v_2\in W_p$ be such that $\mathcal{C}_2(v_1) = \mathcal{C}_2(v_2)$. By linearity of $\mathcal{C}_2$, we have
\begin{align*}
    \int_{\T^n} \Delta (v_1-v_2) \Delta z = \langle \mathcal{C}_2(v_1-v_2),z\rangle = 0\qquad \forall\, z\in W_q.
\end{align*}
Since $\Delta(W_q) = L^q_0(\T^n)$, we obtain $\int_{\T^n} \Delta (v_1-v_2) g = 0$ for all $g\in L^q_0(\T^n)$, and thus, $\Delta (v_1-v_2) = \int_{\T^n} \Delta (v_1-v_2) = 0$ a.e. in $\T^n$. Since $v_1-v_2\in L^p_0(\T^n)$, we conclude that $v_1 - v_2 = 0$ a.e. in $\T^n$. Hence, $\mathcal{C}_2$ is injective.

It remains to show that $\mathcal{C}_2$ is surjective. To this end, let $\Phi\in W_q^\ast$. In view of the fact that $\left.\Delta\right\rvert_{W_q}:W_q \rightarrow L^q_0(\T^n)$ is bijective, we introduce
\begin{align*}
    L: L^q(\T^n)\rightarrow \R,\qquad g\mapsto \langle L,g\rangle := \left\langle\Phi,(\left.\Delta\right\rvert_{W_q})^{-1} \left(g-\int_{\T^n}g\right)\right\rangle.
\end{align*}
Noting that $L \in (L^q(\T^n))^\ast$, we deduce that there exists $\ell \in L^p(\T^n)$ such that $\langle L,g\rangle = \int_{\T^n} \ell g$ for all $g\in L^q(\T^n)$. We have $\ell \in L^p_0(\T^n)$ since $\int_{\T^n} \ell = \langle L,1\rangle = 0$. In view of the fact that $\left.\Delta\right\rvert_{W_p}:W_p \rightarrow L^p_0(\T^n)$ is bijective, we define $w := (\left.\Delta\right\rvert_{W_p})^{-1} \ell \in W_p$. Then, there holds
\begin{align*}
    \langle\mathcal{C}_2(w),z\rangle = \int_{\T^n} \Delta w\, \Delta z = \int_{\T^n} \ell\, \Delta z = \langle L,\Delta z\rangle = \langle \Phi, z\rangle\qquad \forall\, z\in W_q,
\end{align*}
i.e., $\mathcal{C}_2(w) = \Phi$. Hence, $\mathcal{C}_2$ is surjective.

\textbf{Step 3.} In view of Lemma \ref{Lmm: near}, we combine Steps 1 and 2 to obtain that $\mathcal{C}_1$ is a bijection and we have the bound
\begin{align}\label{St 3}
    \| \mathcal{C}_2(v_1 - v_2) \|_{W_q^\ast} \leq (1-\tilde K (\xi(p))^{-1})^{-1} \| \mathcal{C}_1(v_1) - \mathcal{C}_1(v_2)\|_{W_q^\ast}\quad \forall\, v_1,v_2\in W_{p}.
\end{align}
In particular, there exists a unique $v\in W_p$ such that $\mathcal{C}_1(v) = 0$, i.e.,
\begin{align*}
    \int_{\T^n} (H(x,\cdot,X+D^2 v)) (-\Delta z) = 0\qquad \forall\, z\in W_q.
\end{align*}
Equivalently, since $\Delta(W_q) = L^q_0(\T^n)$, there exists a unique $v\in W_p$ such that $H(x,\,\cdot\,,X+D^2 v) = \int_{\mathbb{T}^n} H(x,\,\cdot\,,X+D^2 v)$ a.e. in $\T^n$, which is the cell problem \eqref{cell}. Further, choosing $v_1 = v$ and $v_2 = 0$ in \eqref{St 3}, we obtain the bound
\begin{align*}
    c_{\T^n}(p) \|D^2 v\|_{L^p(\T^n)} \leq \|\Delta v\|_{L^p(\T^n)} &\leq (\xi(p))^{-1}\|\mathcal{C}_2 (v)\|_{W_q^\ast} \\ &\leq (\xi(p))^{-1}(1-\tilde K (\xi(p))^{-1})^{-1} \| \mathcal{C}_1(0)\|_{W_q^\ast} \\ &\leq (\xi(p)-\tilde K )^{-1} \| H(x,\,\cdot\,,X)\|_{L^p(\T^n)},
\end{align*}
which yields the claimed bound \eqref{v w2p bd}.
\end{proof}

With the corrector functions $\{v(\cdot;x,X)\}_{(x,X)\in \overline{\Omega}\times \R^{n\times n}_{\mathrm{sym}}}$ from Theorem \ref{Thm: cell} at hand, we can define the effective Hamiltonian.

\begin{dfntn}[effective Hamiltonian]\label{Def: Hbar}
Let the situation be as in Theorem \ref{Thm: cell}. We define the effective Hamiltonian
\begin{align*}
  \overline{H}:\overline{\Omega}\times \R^{n\times n}_{\mathrm{sym}}\rightarrow \R,\qquad  (x,X)\mapsto\overline{H}(x,X) := \int_{\mathbb{T}^n} H(x,\,\cdot\,,X+D^2 v(\,\cdot\,;x,X)),
\end{align*}
where $\{v(\,\cdot\,;x,X)\}_{(x,X)\in \overline{\Omega}\times \R^{n\times n}_{\mathrm{sym}}}$ denote the corrector functions from Theorem \ref{Thm: cell}.
\end{dfntn}

\subsection{Convergence result}\label{Sec: 3.3}

Let us begin by showing the existence and uniqueness of a $W^{2,p}$-solution to the homogenized problem.

\begin{lmm}[$W^{2,p}$-solvability of the homogenized problem]\label{Lmm: solv hom prob}
    Let the situation be as in Theorem \ref{Thm: cell}, and let $\overline{H}$ denote the effective Hamiltonian defined in Definition \ref{Def: Hbar}. Then, the homogenized problem
    \begin{align}\label{hom prob}
    \begin{split}
\overline{H}(\,\cdot\,,D^2 u) &= 0\quad\text{in }\Omega,\\
    u &= 0\quad\text{on }\partial\Omega
    \end{split}
    \end{align}
    has a unique solution $u\in W^{2,p}(\Omega)\cap W^{1,p}_0(\Omega)$, and we have the bound
    \begin{align}\label{effbd}
        \lvert u\rvert_{W^{2,p}(\Omega)} \leq (c(p)-\sqrt{1-\delta})^{-1}  \left\| \overline{H}(\,\cdot\,,0)  \right\|_{L^p(\Omega)}.
    \end{align}
\end{lmm}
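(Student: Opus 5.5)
The plan is to apply Lemma \ref{Lmm: near} once more, now with $L=-\Delta$ on $W^{2,p}(\Omega)\cap W^{1,p}_0(\Omega)\to L^p(\Omega)$ (a bijection by the assumption of Remark \ref{Rk: C1theta}) and $F(w):=\overline{H}(\cdot,D^2w)$, with $a=1$. The key structural fact we need is a Lipschitz-type estimate for $\overline{H}$ mirroring \eqref{Lip type}:
\begin{align*}
\lvert \overline{H}(x,X)-\overline{H}(x,Y)-\mathrm{tr}(X-Y)\rvert\leq \sqrt{1-\delta}\,\lvert X-Y\rvert\qquad\forall\, x\in\overline{\Omega},\; X,Y\in\R^{n\times n}_{\mathrm{sym}}.
\end{align*}
Granting this, for $w_1,w_2$ in the space we get pointwise a.e.\ $\lvert -\Delta(w_1-w_2)-[F(w_1)-F(w_2)]\rvert\leq\sqrt{1-\delta}\,\lvert D^2(w_1-w_2)\rvert$, hence, by \eqref{c of p defn}, nearness with $K=c(p)^{-1}\sqrt{1-\delta}<1$. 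Lemma \ref{Lmm: near} then gives bijectivity of $F$, so a unique solution $u$ exists, and taking $w_1=u$, $w_2=0$ gives $c(p)\lvert u\rvert_{W^{2,p}}\le\|\Delta u\|_{L^p}\le (1-K)^{-1}\|\overline{H}(\cdot,0)\|_{L^p}$, which is \eqref{effbd}. Before this we must check $F$ maps into $L^p(\Omega)$: measurability of $x\mapsto\overline{H}(x,D^2w(x))$ follows from continuity of $\overline{H}$ in $(x,X)$ (continuity in $X$ from the Lipschitz bound, continuity in $x$ from the stability estimate below plus uniform continuity of $f$ on the compact $\overline{\Omega}\times\mathcal{A}\times\mathcal{B}$), and the $L^p$ bound follows from $\lvert\overline{H}(x,X)\rvert\le\lvert\overline{H}(x,0)\rvert+(\sqrt n+1)\lvert X\rvert$ with $\overline{H}(\cdot,0)$ bounded.

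The main obstacle is the Lipschitz estimate for $\overline{H}$, since $\overline{H}$ is defined through the correctors. I would first try the comparison-type argument: set $v_X=v(\cdot;x,X)$, $v_Y=v(\cdot;x,Y)$, $Z=X-Y$, $\phi=v_X-v_Y\in W_p$. Then, as in \eqref{Lip type},
\begin{align*}
\overline{H}(x,X)-\overline{H}(x,Y)+\mathrm{tr}Z+\Delta\phi = H(x,\cdot,X+D^2v_X)-H(x,\cdot,Y+D^2v_Y)+\mathrm{tr}(Z+D^2\phi)
\end{align*}
is pointwise bounded by $\sqrt{1-\delta}\,\lvert Z+D^2\phi\rvert$. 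Integrating over $\T^n$ (with $\int\Delta\phi=0$) gives $\lvert\overline{H}(x,X)-\overline{H}(x,Y)+\mathrm{tr}Z\rvert\le\sqrt{1-\delta}\int_{\T^n}\lvert Z+D^2\phi\rvert$, with the sign convention matching $-\Delta$. The difficulty is that $\int\lvert Z+D^2\phi\rvert$ need not be $\le\lvert Z\rvert$. To repair this I would use that the left side is a constant $c$ equal to $\overline{H}(x,X)-\overline{H}(x,Y)+\mathrm{tr}Z$ plus the oscillating term $\Delta\phi$; so writing $g:=c+\Delta\phi$ with $\lvert g\rvert\le\sqrt{1-\delta}(\lvert Z\rvert+\lvert D^2\phi\rvert)$ pointwise, take $L^p$ norms and use Lemma \ref{Lmm: xi expl form} and \eqref{cpTn}: $\xi(p)c_{\T^n}(p)\|D^2\phi\|_{L^p}\le\xi(p)\|\Delta\phi\|_{L^p}\le\|g\|_{L^p}\le\sqrt{1-\delta}(\lvert Z\rvert+\|D^2\phi\|_{L^p})$. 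Since $\xi(p)c_{\T^n}(p)>\sqrt{1-\delta}$, this bounds $\|D^2\phi\|_{L^p}\le C\lvert Z\rvert$, yielding a Lipschitz estimate for $\overline{H}$ with a constant possibly worse than $\sqrt{1-\delta}$. If the sharp constant $\sqrt{1-\delta}$ cannot be obtained this way, I would alternatively look for a homogeneity/maximum-principle argument (e.g.\ showing $\overline{H}(x,\cdot)+\mathrm{tr}$ inherits the bound via the structure $\inf\sup$ of affine maps in $X$ and the fact that $\overline H$ is monotone and, restricted to the $\sqrt{1-\delta}$-ball perturbation, controlled by testing with the constant-Hessian comparison); obtaining exactly $K=c(p)^{-1}\sqrt{1-\delta}$ as required by \eqref{effbd} is the step I expect to demand the most care.
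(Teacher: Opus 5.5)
Your architecture matches the paper's Step 2: show that $w\mapsto\overline{H}(\cdot,D^2w)$ is near $-\Delta$ on $W^{2,p}(\Omega)\cap W^{1,p}_0(\Omega)$ with $K=c(p)^{-1}\sqrt{1-\delta}$, invoke Lemma~\ref{Lmm: near}, and test with $w_1=u$, $w_2=0$. Your measurability remarks are a welcome extra. However, there is a genuine gap at the step you flag yourself: the Lipschitz estimate for $\overline{H}$ with the sharp constant $\sqrt{1-\delta}$ is not proved, and everything hinges on it.

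Your first display also has the wrong sign. Since $H\approx-\mathrm{tr}$, the small quantity is $\overline{H}(x,X)-\overline{H}(x,Y)+\mathrm{tr}(X-Y)$, which is what you use later.

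Integrating the pointwise bound directly cannot work, because $\int_{\T^n}\lvert Z+D^2\phi\rvert\geq\lvert\int_{\T^n}(Z+D^2\phi)\rvert=\lvert Z\rvert$. Your $L^p$ route uses the triangle inequality $\lvert Z+D^2\phi\rvert\le\lvert Z\rvert+\lvert D^2\phi\rvert$; carried to the end, it yields
\[
\lvert \overline{H}(x,X)-\overline{H}(x,Y)+\mathrm{tr}\,Z\rvert\leq \sqrt{1-\delta}\,\frac{\xi(p)c_{\T^n}(p)}{\xi(p)c_{\T^n}(p)-\sqrt{1-\delta}}\,\lvert Z\rvert .
\]
This constant is strictly larger than $\sqrt{1-\delta}$ and blows up as $\sqrt{1-\delta}\uparrow\xi(p)c_{\T^n}(p)$. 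Nearness would then require it to be smaller than $c(p)$, which the hypotheses do not guarantee. So neither existence and uniqueness nor the bound \eqref{effbd} follows, and the suggested homogeneity/maximum-principle alternative is not an argument.

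The missing idea is to square before integrating, i.e.\ to work in $L^2(\T^n)$ regardless of $p$. Set $\Theta:=\overline{H}(x,X)-\overline{H}(x,Y)+\mathrm{tr}\,Z$; then $\lvert\Theta+\Delta\phi\rvert^2\leq(1-\delta)\lvert Z+D^2\phi\rvert^2$ a.e.\ in $\T^n$. Since $\Theta$ and $Z$ are constant while $\int_{\T^n}\Delta\phi=0$ and $\int_{\T^n}D^2\phi=0$, the cross terms vanish upon integration, giving
\[
\lvert\Theta\rvert^2+\|\Delta\phi\|_{L^2(\T^n)}^2\leq(1-\delta)\lvert Z\rvert^2+(1-\delta)\|D^2\phi\|_{L^2(\T^n)}^2 .
\]
The identity $\|D^2\phi\|_{L^2(\T^n)}=\|\Delta\phi\|_{L^2(\T^n)}$ (integrate by parts twice on the torus) lets you absorb the last term. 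This leaves $\lvert\Theta\rvert^2+\delta\|\Delta\phi\|_{L^2(\T^n)}^2\leq(1-\delta)\lvert Z\rvert^2$, which is exactly the sharp estimate.

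Your triangle inequality discards precisely this Pythagorean orthogonality between constants and mean-zero functions. The $L^p$ constants $\xi(p)$ and $c_{\T^n}(p)$ are not needed for this step, because their $L^2$ counterparts both equal $1$.
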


\begin{proof}
\textbf{Step 1.} We show that, for any $x\in \overline{\Omega}$ and $X_1,X_2\in \R^{n\times n}_{\mathrm{sym}}$, we have
\begin{align}\label{goal St1 hom eqn}
    \left\lvert \overline{H}(x,X_1) - \overline{H}(x,X_2) + \mathrm{tr}(X_1-X_2)\right\rvert \leq \sqrt{1-\delta}\, \lvert X_1 - X_2\rvert.
\end{align}
To this end, let $x\in \overline{\Omega}$ and $X_1,X_2\in \R^{n\times n}_{\mathrm{sym}}$. Writing $w := v(\,\cdot\,;x,X_1) - v(\,\cdot\,;x,X_2)$ and $\Theta := \overline{H}(x,X_1) - \overline{H}(x,X_2) + \mathrm{tr}(X_1-X_2)$, an argument similar to \eqref{Lip type} yields
\begin{align}\label{tointe}
\begin{split}
    \lvert \Theta + \Delta w\rvert^2 &= \lvert H(x,\cdot,X_1 + D^2 v(\cdot;x,X_1)) - H(x,\cdot,X_2 + D^2 v(\cdot;x,X_2)) \\ &\qquad+ \mathrm{tr}(X_1 - X_2 + D^2 w) \rvert^2 \\
    &\leq (1-\delta) \lvert X_1 - X_2 + D^2 w\rvert^2\quad\text{a.e. in }\T^n.
    \end{split}
\end{align}
Noting that $\int_{\T^n} \Delta w = \int_{\T^n} D^2 w = 0$, integrating inequality \eqref{tointe} over $\T^n$ yields
\begin{align*}
    \lvert \Theta\rvert^2 + \|\Delta w\|_{L^2(\T^n)}^2 \leq (1-\delta) \lvert X_1 - X_2\rvert^2 + (1-\delta)\|D^2 w\|_{L^2(\T^n)}^2  .
\end{align*}
In view of the identity $\|D^2 w\|_{L^2(\T^n)} = \|\Delta w\|_{L^2(\T^n)}$, we can absorb the final term on the right-hand side into the left-hand side to conclude that
\begin{align*}
    \lvert \Theta\rvert^2 \leq \lvert \Theta\rvert^2 + \delta\|\Delta w\|_{L^2(\T^n)}^2 \leq (1-\delta)\lvert X_1 - X_2\rvert^2,
\end{align*}
i.e., \eqref{goal St1 hom eqn} holds.

\textbf{Step 2.} We observe that the operator
\begin{align*}
    \overline{F}: W^{2,p}(\Omega)\cap W^{1,p}_0(\Omega)\rightarrow L^p(\Omega),\qquad w\mapsto \overline{H}(\,\cdot\,,D^2 w)
\end{align*}
is near $-\Delta : W^{2,p}(\Omega)\cap W^{1,p}_0(\Omega)\rightarrow L^p(\Omega)$ since, using \eqref{goal St1 hom eqn},
\begin{align*}
   \left\| \overline{F}(w_1) -  \overline{F}(w_2) + \Delta(w_1 - w_2) \right\|_{L^p(\Omega)}&\leq \sqrt{1-\delta}\,\left\| D^2 (w_1-w_2)\right\|_{L^p(\Omega)}\\
   &\leq \sqrt{1-\delta}\,(c(p))^{-1} \|\Delta (w_1-w_2)\|_{L^p(\Omega)},
\end{align*}
where $c(p) > \sqrt{1-\delta}$ is the constant from \eqref{c of p defn}. In view of Remark \ref{Rk: C1theta} and Lemma \ref{Lmm: near}, we conclude that $\overline{F}$ is bijective. In particular, there exists a unique $u\in W^{2,p}(\Omega)\cap W^{1,p}_0(\Omega)$ such that $\overline{F}(u) = 0$ a.e. in $\Omega$, and we have the bound
\begin{align*}
    c(p)\|D^2 u\|_{L^p(\Omega)} \leq \| \Delta u\|_{L^p(\Omega)} \leq \frac{\| \overline{F}(0) - \overline{F}(u) \|_{L^p(\Omega)}}{1-\sqrt{1-\delta}\,(c(p))^{-1}} = \frac{\| \overline{H}(\,\cdot\,,0)  \|_{L^p(\Omega)}}{1-\sqrt{1-\delta}\,(c(p))^{-1}},
\end{align*}
which yields \eqref{effbd}.
\end{proof}

We are now in a position to prove a qualitative homogenization result.

\begin{thrm}[homogenization of Isaacs equations in a Cordes setting]\label{Thm: Hom thm}
    Let the situation be as in Lemma \ref{Lmm: solv hom prob}. Then, as $\eps\searrow 0$, the unique solution $u_{\eps}\in W^{2,p}(\Omega)\cap W^{1,p}_0(\Omega)$ of the Isaacs problem \eqref{ueps prob} converges weakly in $W^{2,p}(\Omega)$ to the unique solution $u\in W^{2,p}(\Omega)\cap W^{1,p}_0(\Omega)$ of the homogenized problem \eqref{hom prob}.
\end{thrm}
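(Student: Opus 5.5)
The plan is to compare $u_\eps$ with an explicitly constructed two-scale approximation $z_\eps$ of the homogenized solution $u$ from Lemma \ref{Lmm: solv hom prob}, and to control $u_\eps - z_\eps$ by the stability estimate \eqref{tilde bd}. In its $W^{2,p}$ version (Remark \ref{Rk: C1theta}), this estimate gives, for all $w\in W^{2,p}(\Omega)\cap W^{1,p}_0(\Omega)$,
\begin{align*}
\|\Delta(u_\eps - w)\|_{L^p(\Omega)} \leq C\,\|H(\,\cdot\,,\tfrac{\cdot}{\eps},D^2 w)\|_{L^p(\Omega)},
\end{align*}
with $C$ independent of $\eps$. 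This uses $H(\cdot,\frac{\cdot}{\eps},D^2u_\eps)=0$ and the fact that $H(\cdot,\frac{\cdot}{\eps},D^2 w)$ is exactly $\tilde F_\eps(w)$. The approach avoids subsequences: it shows directly that $\limsup_{\eps\searrow0}\|D^2(u_\eps-u)\|_{L^p}$ can be made arbitrarily small up to a term that vanishes weakly.

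\textbf{Construction.} Fix $h>0$ and a finite family of disjoint cubes $K\subset\subset\Omega$ of side $h$ with centres $x_K$. Choose constant matrices $X_K$ such that the piecewise constant function $X_h:=\sum_K X_K\mathbf 1_K$ satisfies $\|D^2u - X_h\|_{L^p(\Omega)}\to0$ as $h\to0$; this is possible by density of piecewise constants in $L^p$, and the cubes need only exhaust $\Omega$ up to a set of small measure. Let $v_K:=v(\cdot;x_K,X_K)\in W^{2,p}(\T^n)$ be the correctors from Theorem \ref{Thm: cell}, and let $\chi_K\in C_c^\infty(K)$ be cutoffs equal to $1$ except on a layer of width $\rho h$, with $|D^j\chi_K|\lesssim(\rho h)^{-j}$. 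Set
\begin{align*}
z_\eps := u + \eps^2\sum_K \chi_K\, v_K\!\left(\tfrac{\cdot}{\eps}\right)\in W^{2,p}(\Omega)\cap W^{1,p}_0(\Omega).
\end{align*}
Then $D^2z_\eps = D^2u + \sum_K\chi_K (D^2v_K)(\frac{\cdot}{\eps}) + R_\eps$. The remainder $R_\eps$ contains the terms $\eps\,D\chi_K\otimes Dv_K(\frac{\cdot}{\eps})$ and $\eps^2 D^2\chi_K\, v_K(\frac{\cdot}{\eps})$. Since $v_K\in W^{2,p}(\T^n)\subset C^1$ (as $p>n$), we get $\|R_\eps\|_{L^p}\to0$ as $\eps\searrow0$ for fixed $h,\rho$.

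\textbf{Residual estimate.} By the Lipschitz bound \eqref{Lip type}, $H$ is Lipschitz in $X$ with constant $C$. From its definition, $|H(x,y,X)-H(x',y,X)|\leq \|\gamma\|_\infty\sup_{\alpha,\beta}|f_{\alpha\beta}(x)-f_{\alpha\beta}(x')|$, which is small for $|x-x'|\le h$ by uniform continuity of $f$. On $\{\chi_K=1\}\subset K$ we have
\begin{align*}
\bigl|H(x,\tfrac{x}{\eps},D^2z_\eps) - \overline H(x_K,X_K)\bigr| \leq C\bigl(|D^2u-X_K| + |R_\eps| + \omega_f(h)\bigr),
\end{align*}
using that $H(x_K,\cdot,X_K+D^2v_K)\equiv\overline H(x_K,X_K)$ a.e. by \eqref{cell}. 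Moreover, $|\overline H(x_K,X_K)| = |\overline H(x_K,X_K)-\overline H(x,D^2u(x))|\leq C(|X_K-D^2u|+\omega_f(h))$ for a.e.\ $x\in K$. Here the Lipschitz bound in $X$ is Step 1 of Lemma \ref{Lmm: solv hom prob}, and continuity in $x$ follows by rerunning the near-operator estimate \eqref{St 3} with two different $x$.

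On the layers where $0<\chi_K<1$, and on $\Omega\setminus\bigcup K$, there is no cancellation. There I will bound crudely by $C(|D^2u|+|D^2v_K(\frac{\cdot}{\eps})|+|R_\eps|+\sup|f|)$. The $L^p$ norm of $|D^2v_K(\frac{\cdot}{\eps})|$ over the layer tends, as $\eps\searrow0$, to $|{\rm layer}|^{1/p}\|D^2v_K\|_{L^p(\T^n)}$, which is $O(\rho^{1/p})$ uniformly thanks to \eqref{v w2p bd}, since $\sum_K|K|\,\|H(x_K,\cdot,X_K)\|^p_{L^p}\lesssim \|X_h\|^p_{L^p}+1$.

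\textbf{Conclusion and main obstacle.} Taking $\limsup_{\eps}$, then $\rho\to0$, then $h\to0$, yields $\limsup_\eps\|\Delta(u_\eps-z_\eps)\|_{L^p}\to 0$, and hence $\limsup_\eps\|D^2(u_\eps - z_\eps)\|_{L^p}$ is small by \eqref{MT} with $c(p)$. Since $z_\eps - u = \eps^2\sum_K\chi_K v_K(\frac{\cdot}{\eps})$ converges to $0$ weakly in $W^{2,p}$ (it is bounded there and tends to $0$ uniformly), we get $u_\eps\weak u$. Uniqueness of $u$ means no subsequence argument is needed. The main obstacle I expect is the bookkeeping in the transition layers and the boundary region. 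There the two-scale ansatz gives no cancellation, so one must ensure the oscillatory term $D^2v_K(\frac{\cdot}{\eps})$ has small $L^p$ mass uniformly in the (growing) number of cubes. I would handle this with the uniform corrector bound \eqref{v w2p bd} and the fact that averages of $|D^2v_K(\frac{\cdot}{\eps})|^p$ over fixed sets converge to $\|D^2v_K\|^p_{L^p(\T^n)}$ times the measure.
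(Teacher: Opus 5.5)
Your argument is correct, but it takes a genuinely different route from the paper.

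\textbf{The paper's route.} The paper argues by compactness. It extracts a subsequence with $u_\eps\weak u$ in $W^{2,p}(\Omega)$ and $u_\eps\to u$ in $C^{1,\theta}(\overline\Omega)$. It then proves $\overline H(x_0,D^2u(x_0))=0$ at every $L^p$-Lebesgue point $x_0$ of $D^2u$ by contradiction. For this it uses a perturbed test function (the second-order Taylor polynomial of $u$ at $x_0$, plus $\eps^2 v(\frac{\cdot}{\eps};x_0,D^2u(x_0))$, plus a small quadratic), the linearization $a_\eps$ from \eqref{linearization}, and the maximum principle on a small ball. Uniqueness from Lemma \ref{Lmm: solv hom prob} then gives convergence of the whole family.

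\textbf{Your route.} You start from the solution $u$ of \eqref{hom prob} and build a global two-scale ansatz with piecewise-frozen correctors. You then control $u_\eps-z_\eps$ directly through the $W^{2,p}$ version of the stability bound \eqref{tilde bd}. This works because $\tilde F_\eps(w)=H(\cdot,\frac{\cdot}{\eps},D^2w)$ and $\tilde F_\eps(u_\eps)=0$. Your route needs two ingredients the paper's proof does not:
\begin{itemize}
\item continuity of $\overline H$ in the slow variable, uniformly in $X$, which indeed follows from \eqref{St 3} applied with two different values of $x$;
\item the bookkeeping on the transition layers and on $\Omega\setminus\bigcup K$, which your uniform corrector bound \eqref{v w2p bd} and the estimate $\sum_K|K|\,\|H(x_K,\cdot,X_K)\|_{L^p(\T^n)}^p\lesssim\|X_h\|_{L^p(\Omega)}^p+1$ handle correctly.
\end{itemize}

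\textbf{What each buys.} Your approach dispenses with the maximum principle and with compactness, using only the near-operator estimate that drives the rest of the paper. It also proves more than the statement, namely a strong corrector result: $\inf_{h,\rho}\limsup_{\eps\searrow0}\|D^2(u_\eps-z_\eps^{h,\rho})\|_{L^p(\Omega)}=0$, so $D^2u_\eps$ is approximated in $L^p$ by $D^2u+\sum_K\chi_K D^2v_K(\frac{\cdot}{\eps})$. The paper's argument is shorter and purely local. It needs the corrector only at a single frozen pair $(x_0,D^2u(x_0))$ and no discretization of $D^2u$. Its comparison-principle machinery is also exactly what is reused for the $\mathcal{O}(\eps^\theta)$ rate in Section \ref{Sec: 3.4}.
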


\begin{proof}
We recall from the discussion preceding \eqref{conv ueps} that there exists a function $u\in W^{2,p}(\Omega)\cap W^{1,p}_0(\Omega)$ such that, upon passing to a subsequence,
\begin{align}\label{conv ueps 2}
\begin{split}
    u_\eps \weak u\;\;\text{weakly in }W^{2,p}(\Omega),\quad
    u_\eps \rightarrow u\;\;\text{strongly in }C^{1,\theta}(\overline{\Omega})\quad\text{ as }\eps\searrow 0
    \end{split}
\end{align}
for $\theta\in (0,1-\frac{n}{p})$. In view of Lemma \ref{Lmm: solv hom prob}, the proof is complete if we can show that $\overline{H}(\,\cdot\,,D^2 u) = 0$ a.e. in $\Omega$. In fact, we will show that $\overline{H}(x_0,D^2 u(x_0)) = 0$ for all $L^p$-Lebesgue points of $D^2 u$. To this end, let $x_0 \in \Omega$ be an $L^p$-Lebesgue point of $D^2 u$, and suppose for a proof by contradiction that $\overline{H}(x_0,D^2 u(x_0)) \neq 0$. 

First, let us note that there exists a modulus of continuity $\omega$ such that
\begin{align*}
    \lvert H(x_1,y,X) - H(x_2,y,X)\rvert \leq \omega(\lvert x_1-x_2\rvert)\qquad \forall x_1,x_2\in \overline{\Omega},\; y\in \R^n,\; X\in \R^{n\times n}_{\mathrm{sym}},
\end{align*}
and, by an argument similar to \eqref{Lip type}, we have
\begin{align}\label{HLip}
\begin{split}
    &\left\lvert H(x,y,X_1) - H(x,y,X_2) \right\rvert \\&\qquad\leq \left\lvert H(x,y,X_1) - H(x,y,X_2) + \mathrm{tr}(X_1 - X_2) \right\rvert + \sqrt{n}\, \lvert X_1 - X_2\rvert \\
    &\qquad\leq \left(\sqrt{1-\delta} + \sqrt{n}\right)\lvert X_1 - X_2\rvert \qquad \forall x\in \overline{\Omega},\; y\in \R^n,\; X_1,X_2\in \R^{n\times n}_{\mathrm{sym}}. 
\end{split}
\end{align}
Writing $q(x) := u(x_0) + \nabla u(x_0)\cdot  (x-x_0) + \frac{1}{2}(D^2 u(x_0)(x-x_0))\cdot (x-x_0)$, we use Morrey's inequality to obtain
\begin{align*}
    \frac{\|u - q\|_{L^\infty(B_r(x_0))}}{r^2} \lesssim  \frac{\| D^2(u-q) \|_{L^p(B_r(x_0))}}{r^{n/p}}  \lesssim \left(\meanint_{B_r(x_0)} \lvert D^2 u - D^2 u(x_0)\rvert^p  \right)^{1/p} \underset{r\searrow 0}{\longrightarrow} 0.
\end{align*}
Hence, we can fix $r > 0$ such that $\overline{B_r(x_0)} \subset \Omega$ and
\begin{align*}
    r^{-2}\|u-q\|_{L^\infty(B_r(x_0))}\leq \frac{ \left\lvert \overline{H}(x_0,D^2 u(x_0)) \right\rvert}{16 \sqrt{n}(\sqrt{1-\delta} + \sqrt{n})},\qquad \omega(r)\leq \frac{\left\lvert \overline{H}(x_0,D^2 u(x_0)) \right\rvert}{4}.
\end{align*}
Consider the perturbed test function $\Psi_\eps\in W^{2,p}(B_r(x_0))$ given by 
\begin{align*}
    \Psi_\eps(x) := q(x) + \eps^2 v\left(\frac{x}{\eps}\,; x_0,D^2 u(x_0)\right) + \frac{ \overline{H}(x_0,D^2 u(x_0))}{8 \sqrt{n}(\sqrt{1-\delta} + \sqrt{n})}\left(\lvert x-x_0\rvert^2-r^2\right).
\end{align*}
Using \eqref{cell} and \eqref{HLip}, we have for a.e. $x\in B_r(x_0)$ that
\begin{align*}
    &\left\lvert H\left(x,\frac{x}{\eps},D^2\Psi_\eps(x)\right) - \overline{H}(x_0,D^2 u(x_0)) \right\rvert \\&= \left\lvert H\left(x,\frac{x}{\eps},D^2\Psi_\eps(x)\right) - H\left(x_0,\frac{x}{\eps},D^2 u(x_0)+D^2 v\left(\frac{x}{\eps};x_0,D^2 u(x_0)\right)\right) \right\rvert \\
    &\leq \left\lvert H\left(x,\frac{x}{\eps},D^2\Psi_\eps(x)\right) - H\left(x,\frac{x}{\eps},D^2 u(x_0)+D^2 v\left(\frac{x}{\eps};x_0,D^2 u(x_0)\right)\right) \right\rvert + \omega(\lvert x-x_0\rvert) \\
    &\leq (\sqrt{1-\delta}\hspace{-0.05cm} +\hspace{-0.07cm} \sqrt{n}) \left\lvert  D^2\Psi_\eps(x) - D^2 u(x_0) - D^2 v\left(\frac{x}{\eps};x_0,D^2 u(x_0)\right) \right\rvert + \frac{ \lvert \overline{H}(x_0,D^2 u(x_0))\rvert }{4}\\
    &= \frac{\lvert \overline{H}(x_0,D^2 u(x_0))\rvert}{2}.
\end{align*}
Writing $s:= \mathrm{sign}(\overline{H}(x_0,D^2 u(x_0)))$ and $z_\eps:= s(u_\eps - \Psi_\eps)\in W^{2,p}(B_r(x_0))$, this implies
\begin{align*}
    a_\eps:D^2 z_\eps = sH\left(\,\cdot\,,\,\frac{\cdot}{\eps}\,,D^2\Psi_\eps\right) \geq \frac{\lvert \overline{H}(x_0,D^2 u(x_0))\rvert}{2} > 0\quad\text{a.e. in }B_r(x_0),
\end{align*}
where $a_\eps\in L^\infty(B_r(x_0);\R^{n\times n}_{\mathrm{sym}})$ is given by
\begin{align}\label{linearization}
\begin{split}
a_\eps &:= I_n-\frac{\Delta (u_\eps - \Psi_\eps)-H(\cdot,\frac{\cdot}{\eps},D^2\Psi_\eps)}{|D^2(u_\eps - \Psi_\eps)|^2}D^2(u_\eps - \Psi_\eps)\quad\text{on }\{ \lvert D^2(u_\eps - \Psi_\eps)\rvert\neq 0\},\\
a_\eps &:= I_n\qquad\qquad\qquad\qquad\qquad\qquad\qquad\qquad\qquad\quad\;\;\text{ on }\{ \lvert D^2(u_\eps - \Psi_\eps)\rvert = 0\}.
\end{split}
\end{align}
In view of \eqref{H is zero} and \eqref{Lip type}, we observe that $\lvert a_\eps - I_n\rvert \leq \sqrt{1-\delta}$ and $a_\eps \geq (1-\sqrt{1-\delta})I_n$ a.e. in $B_r(x_0)$. Hence, by the maximum principle, $z_\eps(x_0)\leq \max_{\partial B_r(x_0)} z_\eps$. Passing to the limit $\eps\searrow 0$, using \eqref{conv ueps 2}, we find that
\begin{align*}
    \frac{ \lvert \overline{H}(x_0,D^2 u(x_0))\rvert}{8 \sqrt{n}(\sqrt{1-\delta} + \sqrt{n})} r^2 \leq \max_{\partial B_r(x_0)} s(u - q) \leq \frac{ \left\lvert \overline{H}(x_0,D^2 u(x_0)) \right\rvert}{16 \sqrt{n}(\sqrt{1-\delta} + \sqrt{n})}r^2,
\end{align*}
which contradicts the assumption that $ \overline{H}(x_0,D^2 u(x_0))\neq 0$. The proof is complete. 
\end{proof}

\subsection{Convergence rate}\label{Sec: 3.4}

\begin{thrm}[$L^\infty$-rate of convergence]
    Let the situation be as in Theorem \ref{Thm: Hom thm}, and suppose that $u\in C^{2,\theta}(\overline{\Omega})$ and $f_{\alpha\beta}\in C^\theta(\overline{\Omega})$ uniformly in $(\alpha,\beta)\in \mathcal{A}\times \mathcal{B}$. Then,
    \begin{align}\label{rate}
        \|u_{\eps}-u\|_{L^\infty(\Omega)} = \mathcal{O}(\eps^\theta)\quad\text{as }\eps\searrow 0.
    \end{align}
\end{thrm}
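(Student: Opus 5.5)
Compare $u_\eps$ with a two-scale expansion built from the cell correctors, and control the discrepancy by the linearization trick \eqref{linearization} together with the maximum principle (Alexandrov--Bakelman--Pucci for equations with matrix $a$ satisfying $\lvert a-I_n\rvert\le\sqrt{1-\delta}$).

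\textbf{Step 1: regularity of the correctors in the parameters.} Since $u\in C^{2,\theta}(\overline\Omega)$ and $f_{\alpha\beta}\in C^\theta(\overline\Omega)$, the map $x\mapsto (x,D^2u(x))$ is $C^\theta$. I would show that $(x,X)\mapsto v(\cdot;x,X)\in W_p$ is Lipschitz in $X$ and $\theta$-Hölder in $x$. Subtract the cell problems \eqref{cell} for $(x_1,X_1)$ and $(x_2,X_2)$ and apply the stability estimate \eqref{St 3} from the proof of Theorem \ref{Thm: cell}, with the data differences moved into the Hamiltonian. By \eqref{HLip} and the $C^\theta$ modulus of $f$ (with $\gamma$ bounded), this gives
\[
\|D^2(v(\cdot;x_1,X_1)-v(\cdot;x_2,X_2))\|_{L^p(\T^n)}\lesssim \lvert X_1-X_2\rvert+\lvert x_1-x_2\rvert^\theta .
\]
Since $p>n$, Morrey gives $C^{1,1-n/p}$ bounds on the correctors, uniformly over the relevant bounded set of $(x,X)$.

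\textbf{Step 2: the ansatz.} The natural choice is $w_\eps(x):=u(x)+\eps^2 v(x/\eps;x,D^2u(x))$. Its Hessian, however, contains $x$-derivatives of the corrector through the slow variable, which are only Hölder. So I would localize instead. Take a cubic partition of $\Omega$ at mesoscale $\rho=\eps^{\kappa}$, freeze $(x_j,D^2u(x_j))$ on each cell $Q_j$, and glue with a partition of unity $\{\phi_j\}$ with $\lvert D^k\phi_j\rvert\lesssim\rho^{-k}$:
\[
w_\eps(x):=u(x)+\eps^2\sum_j\phi_j(x)\,v\!\left(\frac{x}{\eps};x_j,D^2u(x_j)\right).
\]
On $Q_j$ the cell equation gives $H(x_j,x/\eps,D^2u(x_j)+D^2_yv)=\overline H(x_j,D^2u(x_j))$. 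Moreover, $\overline H(x,D^2u(x))=0$, and $\overline H$ is Lipschitz in $X$ by \eqref{goal St1 hom eqn} and $\theta$-Hölder in $x$ by Step 1. Using \eqref{HLip}, this yields a residual
\[
\lvert H(x,x/\eps,D^2w_\eps)\rvert\lesssim \rho^\theta+\frac{\eps}{\rho}+\frac{\eps^2}{\rho^2}+\eps^2\rho^{-2}\cdot(\text{overlap differences}).
\]
The overlap differences $v(\cdot;x_j,\cdot)-v(\cdot;x_k,\cdot)$ are $O(\rho^\theta)$ by Step 1. The $\eps/\rho$ term comes from cross terms $\eps\nabla\phi_j\otimes\nabla_yv$, bounded in $L^\infty$ via the $C^{1}$ bound. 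Strictly, the residual is controlled only in $L^p$ on each cell, and that is all ABP needs.

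\textbf{Step 3: comparison.} Set $z=u_\eps-w_\eps$. Exactly as in \eqref{linearization}, $a_\eps:D^2z=-H(\cdot,\cdot/\eps,D^2w_\eps)$ with $a_\eps$ uniformly elliptic and bounded. On $\partial\Omega$ we have $z=-\eps^2\sum_j\phi_jv=O(\eps^2)$. ABP with $p>n$ (or the $W^{2,p}$ near-operator bound \eqref{tilde bd} followed by Sobolev embedding) gives
\[
\|z\|_{L^\infty}\lesssim \eps^2+\|H(\cdot,\cdot/\eps,D^2w_\eps)\|_{L^n(\Omega)}.
\]
Since $\|w_\eps-u\|_{L^\infty}=O(\eps^2)$, this produces the rate $\rho^\theta+\eps/\rho$. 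Optimizing in $\rho$ gives $\eps^{\theta/(1+\theta)}$, which is weaker than the claimed $\eps^\theta$.

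\textbf{Main obstacle.} The hard part is getting the sharp exponent $\theta$ rather than $\theta/(1+\theta)$. To remove the loss from the $\eps/\rho$ term, I would first try to exploit that correctors depend Lipschitz-continuously on $X$ and $\theta$-Hölder in $x$ (Step 1), and choose $\rho\sim\eps$ directly, i.e. freeze parameters on each $\eps$-cell. The cross-term error $\eps\nabla\phi_j\otimes\nabla_y(v_j-v_k)$ is then of size $\lvert x_j-x_k\rvert^\theta\sim\eps^\theta$ instead of $O(1)$. This would give a residual of order $\eps^\theta$ and hence \eqref{rate}. The step to check carefully is that the differences of neighbouring correctors, and not just the correctors themselves, are controlled in $W^{1,\infty}$ and $W^{2,p}$ at order $\eps^\theta$; this is exactly what Step 1 provides.
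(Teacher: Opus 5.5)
Your argument, in the form of your last paragraph (correctors frozen on cells of size $\rho\sim\eps$, glued by a partition of unity, followed by a global ABP estimate), is correct. It is, however, a genuinely different route from the paper's.

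The paper never glues correctors. It takes a maximizer $x_0$ of $\Phi_\eps$, which contains the \emph{unfrozen} corrector $\eps^2v(x/\eps;x,D^2u(x))$; this corrector is only evaluated, never differentiated. On the single ball $B_\eps(x_0)$ it compares $u_\eps$ with a test function whose corrector is frozen at $(x_0,D^2u(x_0))$, plus an extra bump $\frac12K\eps^\theta\lvert x-x_0\rvert^2$. Because only one corrector appears on the ball, the residual is bounded below \emph{pointwise} a.e.\ by a positive multiple of $\eps^\theta$, so the qualitative maximum principle suffices. The frozen/unfrozen mismatch on $\partial B_\eps(x_0)$ is at most $c\eps^{2+\theta}$ by the $L^\infty$ bound \eqref{v prep 2}, and it is beaten by the bump $\frac12K\eps^{2+\theta}$. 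The layer $\mathrm{dist}(x_0,\partial\Omega)\le\eps$ is handled separately by \eqref{dist bd}, and the lower bound by a symmetric argument.

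What your construction buys:
\begin{itemize}
\item It gives both bounds at once.
\item It handles the boundary simply through $z=O(\eps^2)$ on $\partial\Omega$.
\item Your Step 1, which gets stability from \eqref{St 3} by moving the parameter change into the operator, holds for arbitrary parameter pairs. The paper's version relies on $\overline H(x_i,D^2u(x_i))=0$.
\end{itemize}

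What the gluing costs:
\begin{itemize}
\item You must use $\sum_j\nabla\phi_j=\sum_jD^2\phi_j=0$ to replace $v_j$ by $v_j-v_k$ in \emph{both} the $\eps\nabla\phi_j$ terms and the $\eps^2D^2\phi_j$ terms. Your last paragraph mentions only the first; without the cancellation, the second is $O(1)$ at $\rho=\eps$. This is where $W^{1,\infty}$ control of neighbouring differences enters.
\item The overlap term $\sum_j\phi_j(D^2v_j-D^2v_k)(\cdot/\eps)$ is only $O(\eps^\theta)$ in $L^p(\Omega)$, not pointwise. So the quantitative ABP estimate with an $L^n$ right-hand side is essential, not optional.
\end{itemize}

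For the same reason, your parenthetical alternative via \eqref{tilde bd} does not work. The function $z$ is not in $W^{1,p}_0(\Omega)$, and lifting its boundary values $-\eps^2\sum_j\phi_jv_j(\cdot/\eps)$ costs an amount of order one in $W^{2,p}$.

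Finally, state the Step 2 residual with the cancellation built in. It is then $\lesssim\rho^\theta(1+\eps/\rho+\eps^2/\rho^2)\lesssim\rho^\theta$ in $L^p(\Omega)$ for every $\rho\ge\eps$, so $\rho=\eps$ is visibly optimal. This is cleaner than deriving the $\eps^{\theta/(1+\theta)}$ bound and then repairing it.
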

\begin{proof}
\textbf{Step 1.} We show that there exists a constant $c>0$, depending only on $n,\|\gamma\|_\infty,p,\Omega,\delta, \sup_{\alpha,\beta}\|f_{\alpha\beta}\|_{C^{\theta}(\overline{\Omega})}$, and $\|u\|_{C^{2,\theta}(\overline{\Omega})}$, such that
    \begin{align}
        \|v(\,\cdot\,;x,D^2u(x))\|_{L^\infty(\T^n)} &\leq c \label{v prep 1}, \\
        \| v(\,\cdot\,;x_1,D^2u(x_1)) - v(\,\cdot\,;x_2,D^2u(x_2))\|_{L^\infty(\T^n)} &\leq c\lvert x_1-x_2\rvert^\theta \label{v prep 2}, \\
        \lvert u_\eps(x) - u(x) \rvert &\leq c\, \mathrm{dist}(x,\partial\Omega) \label{dist bd}
    \end{align}
    for all $x,x_1,x_2\in \overline{\Omega}$. To this end, let $x,x_1,x_2\in \overline{\Omega}$. First, note that by \eqref{v w2p bd} we have
    \begin{align*}
        \lvert v(\,\cdot\,;x,D^2u(x))\rvert_{W^{2,p}(\T^n)} \lesssim_{n,p,\delta} \| H(x,\,\cdot\,,D^2u(x))\|_{L^p(\T^n)}\lesssim_{n,\|\gamma\|_\infty,p,\delta} \lvert D^2 u(x)\rvert + \|f\|_\infty,
    \end{align*}
    which yields \eqref{v prep 1} in view of the Sobolev embeddings and $v\in L^p_0(\T^n)$. Next, writing $X_i := D^2u(x_i)$ and $v_i:=v(\,\cdot\,;x_i,X_i)$ for $i\in\{1,2\}$, we use \eqref{cell} and \eqref{hom prob} to obtain $H(x_1,\,\cdot\,,X_1 + D^2 v_1) = H(x_2,\,\cdot\,,X_2 + D^2 v_2)$ a.e. in $\T^n$. Hence, by \eqref{cpTn}, Lemma \ref{Lmm: cp Om leq Tn}, and \eqref{Lip type}, we have
    \begin{align*}
        &c_{\T^n}(p)\|D^2 (v_1-v_2)\|_{L^p(\T^n)} \leq \|\Delta (v_1-v_2)\|_{L^p(\T^n)} \\
        &\qquad\quad\leq \|\Delta (v_1-v_2) + H(x_1,\,\cdot\,,X_1 + D^2 v_1) - H(x_1,\,\cdot\,,X_1 + D^2 v_2)\|_{L^p(\T^n)} \\
        &\qquad\qquad+ \|H(x_1,\,\cdot\,,X_1 + D^2 v_2) - H(x_2,\,\cdot\,,X_2 + D^2 v_2)\|_{L^p(\T^n)}\\
        &\qquad\quad\leq \sqrt{1-\delta}\, \|D^2(v_1-v_2)\|_{L^p(\T^n)} \\ &\qquad\qquad+ \|\gamma A\|_\infty \lvert X_1 - X_2\rvert + \|\gamma\|_\infty \sup_{\alpha,\beta}\|f_{\alpha\beta}\|_{C^{0,\theta}(\overline{\Omega})} \lvert x_1 - x_2\rvert^\theta
    \end{align*}
    Recalling from Remark \ref{Rk: C1theta} and Lemma \ref{Lmm: cp Om leq Tn} that $c_{\T^n}(p)\geq c(p)> \sqrt{1-\delta}$, we can absorb the first term on the right-hand side into the left-hand side to obtain
    \begin{align*}
        \lvert v_1 - v_2\rvert_{W^{2,p}(\T^n)} \lesssim_{n,\|\gamma\|_\infty,p,\delta} ( \|u\|_{C^{2,\theta}(\overline{\Omega})} + \sup_{(\alpha,\beta)\in\mathcal{A}\times \mathcal{B}}\|f_{\alpha\beta}\|_{C^{0,\theta}(\overline{\Omega})}  )\lvert x_1 - x_2\rvert^\theta,
    \end{align*}
    which yields \eqref{v prep 2} in view of the Sobolev embeddings and $v_1-v_2\in L^p_0(\T^n)$. Finally, to show \eqref{dist bd}, we fix $\tilde x\in \partial\Omega$ such that $\lvert x-\tilde x\rvert = \mathrm{dist}(x,\partial\Omega)$ and note that
    \begin{align*}
        \lvert (u_\eps - u)(x) \rvert &= \lvert (u_\eps - u)(x) - (u_\eps - u)(\tilde x)\rvert\\ &\leq \|\nabla(u_\eps - u)\|_{L^\infty(\Omega)} \lvert x-\tilde x\rvert \lesssim_{n,p,\Omega} (\lvert u_\eps\rvert_{W^{2,p}(\Omega)} + \|u\|_{C^{2,\theta}(\overline{\Omega})}) \mathrm{dist}(x,\partial\Omega).
    \end{align*}
    The Sobolev embeddings together with the uniform bound \eqref{unf w2pbd} then yield \eqref{dist bd}.

\smallskip
\noindent    \textbf{Step 2.} We fix a constant $R > \max_{x\in \overline{\Omega}} (1+\lvert x\rvert^2)^{1/2}$ and a constant
    \begin{align}\label{RK}
        K > \max\left\{2c,\frac{\|\gamma\|_\infty \sup_{\alpha,\beta}\|f_{\alpha\beta}\|_{C^{\theta}(\overline{\Omega})} + \left(\sqrt{1-\delta} + \sqrt{n}\right)\|u\|_{C^{2,\theta}(\overline{\Omega})}}{n(1-\sqrt{1-\delta})}\right\},
    \end{align}
    and introduce $\Phi_\eps:\overline{\Omega}\rightarrow \R$ given by
    \begin{align*}
        \Phi_\eps(x) := u_\eps(x) - u(x) - \eps^2 v\left(\frac{x}{\eps};x,D^2 u(x)\right) - K\eps^\theta \left(R^2 - \lvert x\rvert^2\right).
    \end{align*}    
    Recalling \eqref{v prep 1}, \eqref{v prep 2}, and $u_\eps,u\in W^{2,p}(\Omega)$, we observe that $\Phi_\eps\in C^{0,\theta}(\overline{\Omega})$. We fix $x_0 \in \mathrm{argmax}_{ \overline{\Omega}} \Phi_\eps$, and aim to show that $\Phi_\eps(x_0)\leq 0$. We distinguish two cases.
    
    Case 1: Suppose $\mathrm{dist}(x_0,\partial\Omega)\leq \eps$. Then, by \eqref{dist bd}, \eqref{v prep 1}, and \eqref{RK} we have
    \begin{align*}
        \Phi_\eps(x_0) \leq c\, \mathrm{dist}(x_0,\partial\Omega) + c\,\eps^2 - K\eps^\theta \leq (2c-K)\eps^\theta \leq 0\quad\text{for }\eps\leq 1.
    \end{align*}
    Case 2: Suppose $\mathrm{dist}(x_0,\partial\Omega) > \eps$. In this case $\overline{B_\eps(x_0)}\subset \Omega$ and we introduce the perturbed test function $\Psi_\eps\in W^{2,p}(B_\eps(x_0))$ given by
    \begin{align}\label{Psiepsrate}
        \Psi_\eps(x) := u(x) + \eps^2 v\left(\frac{x}{\eps};x_0,D^2 u(x_0)\right) + K\eps^\theta \left(R^2 - \lvert x\rvert^2 + \frac{1}{2}\lvert x-x_0\rvert^2\right).
    \end{align}
    Using \eqref{HLip}, \eqref{cell}, and \eqref{hom prob}, we obtain for a.e. $x\in B_\eps(x_0)$ that
    \begin{align*}
    &H\left(x,\frac{x}{\eps},D^2 \Psi_\eps(x)\right) = H\left(x,\frac{x}{\eps},D^2u(x)
    + D^2v\left(\frac{x}{\eps};x_0,D^2u(x_0)\right)
    - K\eps^\theta I_n\right) \\
    &\geq H\left(x,\frac{x}{\eps},D^2u(x)
    + D^2v\left(\frac{x}{\eps};x_0,D^2u(x_0)\right)
    \right) + \mathrm{tr}(K\eps^\theta I_n)  -\sqrt{1-\delta}\,\lvert K\eps^\theta I_n\rvert \\
    &\geq H\left(x,\frac{x}{\eps},D^2u(x_0)
    + D^2v\left(\frac{x}{\eps};x_0,D^2u(x_0)\right)
    \right) \\ &\qquad- \left(\sqrt{1-\delta} + \sqrt{n}\right)\lvert D^2 u(x)-D^2 u(x_0)\rvert + n(1-\sqrt{1-\delta})K\eps^\theta \\
    &\geq \overline{H}\left(x_0,D^2u(x_0)\right) - \|\gamma\|_\infty \sup_{\alpha,\beta}\|f_{\alpha\beta}\|_{C^{\theta}(\overline{\Omega})} \lvert x-x_0\rvert^\theta
    \\ &\qquad- \left(\sqrt{1-\delta} + \sqrt{n}\right)\|u\|_{C^{2,\theta}(\overline{\Omega})} \lvert x-x_0\rvert^\theta + n(1-\sqrt{1-\delta})K\eps^\theta \\
    &\geq \left(n(1-\sqrt{1-\delta})K - \|\gamma\|_\infty \sup_{\alpha,\beta}\|f_{\alpha\beta}\|_{C^{\theta}(\overline{\Omega})} - \left(\sqrt{1-\delta} + \sqrt{n}\right)\|u\|_{C^{2,\theta}(\overline{\Omega})}  \right)\eps^\theta.
    \end{align*}
    Note that the right-hand side of the above inequality is positive by \eqref{RK}. Let $a_\eps\in L^\infty(B_\eps(x_0);\R^{n\times n}_\mathrm{sym})$ be defined by \eqref{linearization} with $\Psi_\eps$ given by \eqref{Psiepsrate}. Then, we have that $\lvert a_\eps - I_n\rvert \leq \sqrt{1-\delta}$, $a_\eps \geq (1-\sqrt{1-\delta})I_n$, and $a_\eps:D^2 (u_\eps - \Psi_\eps) = H(\cdot,\frac{\cdot}{\eps},D^2 \Psi_\eps) > 0$ almost everywhere in $B_\eps(x_0)$. Hence, by the maximum principle, 
    \begin{align}\label{for contr1}
    (u_\eps - \Psi_\eps)(x_0)\leq \max_{\partial B_\eps(x_0)} (u_\eps - \Psi_\eps).
    \end{align}
    Note that for any $x\in \partial B_\eps(x_0)$, we can use $x_0\in \mathrm{argmax}_{ \overline{\Omega}} \Phi_\eps$ and \eqref{v prep 2} to see that
    \begin{align*}
        &(u_\eps - \Psi_\eps)(x)-(u_\eps - \Psi_\eps)(x_0) \\
        &= \Phi_\eps(x) - \Phi_\eps(x_0) + \eps^2\left[v\left(\frac{x}{\eps};x,D^2u(x)\right) - v\left(\frac{x}{\eps};x_0,D^2u(x_0)\right)\right] - \frac{1}{2}K\eps^\theta \lvert x - x_0\rvert^2 \\
        &\leq c\eps^2\lvert x-x_0\rvert^\theta - \frac{1}{2}K\eps^\theta \lvert x - x_0\rvert^2 = \frac{1}{2}(2c-K) \eps^{2+\theta}<0,
    \end{align*}
    which contradicts \eqref{for contr1}. So, Case 2 cannot occur.

Altogether, we have shown that $\max_{\overline{\Omega}}\Phi_\eps = \Phi_\eps(x_0) \leq 0$, and hence, using \eqref{v prep 1},
\begin{align*}
    u_\eps(x) - u(x) &\leq \eps^2 v\left(\frac{x}{\eps};x,D^2 u(x)\right) + K\eps^\theta \left(R^2 - \lvert x\rvert^2\right) \leq (c+KR^2)\eps^\theta 
\end{align*}
for $\eps\leq 1$. A similar argument can be used to show the opposite bound of \eqref{rate}.
\end{proof}

\section{Numerical homogenization}\label{Sec: 4}

\subsection{{\boldmath$W^{2,p}$}-conforming finite element approximation of the corrector}\label{Sec: 4.1}

\subsubsection{Setting}\label{Sec: Setting FEM v}
Let the situation be as in Theorem \ref{Thm: cell}. Recall that $p>n$ and $q := \frac{p}{p-1}$ denotes the dual exponent. For $r\in \{p,q\}$, we write $W_r := W^{2,r}(\mathbb{T}^n)\cap L^r_0(\mathbb{T}^n)$ and $\|\cdot\|_{\Delta,r} := \|\Delta(\cdot)\|_{L^r(\mathbb{T}^n)}$, and note that $(W_r,\|\cdot\|_{\Delta,r})$ is a Banach space.

Let $(x,X)\in \overline{\Omega}\times \R^{n\times n}_{\mathrm{sym}}$ be fixed. In this section, we discuss the finite element approximation of the solution $v = v(\,\cdot\,;x,X)\in W_p$ to the cell problem \eqref{cell}, i.e.,
\begin{align}\label{cell ag}
    \text{seek}\quad v\in W_p\quad\text{s.t.}\quad H(x,\,\cdot\,,X+D^2 v) = \int_{\mathbb{T}^n} H(x,\,\cdot\,,X+D^2 v)\quad\text{a.e. in }\mathbb{T}^n.
\end{align}
Let us also recall the operators $\mathcal{C}_1,\mathcal{C}_2:W_p\rightarrow W_q^\ast$ given by
\begin{align*}
    \langle \mathcal{C}_1(w),z\rangle := \int_{\T^n} (H(x,\cdot,X+D^2 w)) (-\Delta z),\qquad
    \langle \mathcal{C}_2(w),z\rangle := \int_{\T^n} (-\Delta w) (-\Delta z) 
\end{align*}
for $w\in W_p$ and $z\in W_q$, which were central for the proof of Theorem \ref{Thm: cell}.

\subsubsection{Finite element approximation}
Our starting point for the construction is the observation that problem \eqref{cell ag} is equivalent to finding $v\in W_p$ such that $\mathcal{C}_1(v)= 0$. For chosen closed linear subspaces 
\begin{align*}
    W_{h,p}\subset W_p,\qquad W_{h,q}\subset W_q,
\end{align*}
we consider the following finite element method:
\begin{align}\label{PG}
    \text{seek}\quad v_h\in W_{h,p}\quad\text{s.t.}\quad \langle \mathcal{C}_1(v_h),z_h\rangle= 0\quad \forall z_h\in W_{h,q}.
\end{align}
The key technical tool in the error analysis of the resulting $W_p$-conforming finite element method is the fact that the nonlinear operator $\mathcal{C}_1$ is near the linear bijection $\mathcal{C}_2$, which we established in the proof of Theorem \ref{Thm: cell}. 

\begin{thrm}[finite element approximation of \eqref{cell ag}]\label{Thm: FEM for v}
    Let the setting be as described above. Suppose that there exists a Fortin operator $\Pi_h:W_q \rightarrow W_{h,q}$ such that
    \begin{align}\label{Fortin}
        \langle \mathcal{C}_2(w_h),z-\Pi_h (z)\rangle = 0,\quad \|\Pi_h (z)\|_{\Delta,q}\leq C_{\Pi} \|z\|_{\Delta,q} \;\; \forall (w_h,z)\in W_{h,p}\times  W_q
    \end{align}
    for some constant $0\leq C_\Pi < \xi(p)c_{\T^n}(p)(1-\delta)^{-1/2}$, and that the following compatibility condition holds for all $z_h\in W_{h,q}$:
    \begin{align}\label{compat}
        \left[\langle \mathcal{C}_2(w_h),z_h\rangle = 0\quad\forall w_h\in W_{h,p}\right]\;\Longrightarrow\; \left[\langle \mathcal{C}_1(w_h),z_h\rangle = 0\quad\forall w_h\in W_{h,p}\right].
    \end{align}
    Then, there exists a unique solution $v_h\in W_{h,p}$ to \eqref{PG}, and we have the error bound
    \begin{align*}
        \lvert v-v_h\rvert_{W^{2,p}(\T^n)} \leq \frac{1}{c_{\T^n}(p)} \left(1 + \frac{C_\Pi}{\xi(p)} \frac{\xi(p)c_{\T^n}(p) + \sqrt{1-\delta} }{\xi(p)c_{\T^n}(p) - C_\Pi\sqrt{1-\delta}}  \right)\inf_{w_h\in W_{h,p}} \|v-w_h\|_{\Delta,p},
    \end{align*}
    where $v\in W_p$ denotes the unique solution to \eqref{cell ag}.
\end{thrm}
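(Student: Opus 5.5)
Our plan is to apply Lemma \ref{Lmm: near} once more, now on the discrete level, with discrete operators built from $\mathcal{C}_1,\mathcal{C}_2$ and the Fortin operator supplying the discrete stability. Let $W_{h,q}^\ast$ carry the dual norm with respect to $\|\cdot\|_{\Delta,q}$, and define $\mathcal{C}_{i,h}:W_{h,p}\to W_{h,q}^\ast$ by restricting test functions, $\langle \mathcal{C}_{i,h}(w_h),z_h\rangle := \langle \mathcal{C}_i(w_h),z_h\rangle$. Problem \eqref{PG} is then $\mathcal{C}_{1,h}(v_h)=0$.

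\textbf{Step 1: discrete inf-sup for $\mathcal{C}_2$.} For $w_h\in W_{h,p}$ and $z\in W_q$, \eqref{Fortin} gives $\langle\mathcal{C}_2(w_h),z\rangle=\langle\mathcal{C}_2(w_h),\Pi_h z\rangle\le \|\mathcal{C}_{2,h}(w_h)\|_{W_{h,q}^\ast}C_\Pi\|z\|_{\Delta,q}$. Together with the computation in Step 1 of the proof of Theorem \ref{Thm: cell}, this yields
\[
\xi(p)\|w_h\|_{\Delta,p}\le \|\mathcal{C}_2(w_h)\|_{W_q^\ast}\le C_\Pi\|\mathcal{C}_{2,h}(w_h)\|_{W_{h,q}^\ast}.
\]
Hence $\mathcal{C}_{2,h}$ is injective. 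We want $\mathcal{C}_{2,h}$ to be surjective onto the relevant space, and this is the main obstacle. In infinite or mismatched dimensions we cannot simply count dimensions, so we work modulo the annihilator: set $N_h:=\{z_h\in W_{h,q}:\langle\mathcal{C}_2(w_h),z_h\rangle=0\ \forall w_h\}$. By \eqref{compat}, $\mathcal{C}_{1,h}$ also takes values in $Y_h:=N_h^\perp\subset W_{h,q}^\ast$, a closed subspace and hence a Banach space. We then show that $\mathcal{C}_{2,h}:W_{h,p}\to Y_h$ is onto. The range is closed by the inf-sup bound above, since $W_{h,p}$ is closed in $W_p$ and therefore complete. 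It is dense in $Y_h$ by a Hahn--Banach/reflexivity argument: any functional on $Y_h$ vanishing on the range is represented via $W_{h,q}$ (using reflexivity of $W_{h,q}$, a closed subspace of a reflexive space) by an element of $N_h$, and such an element vanishes on $Y_h=N_h^\perp$.

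\textbf{Step 2: nearness and well-posedness.} For $v_1,v_2\in W_{h,p}$ we use \eqref{Lip type}, as in \eqref{corr pf St1}, to get
\[
\|\mathcal{C}_{2,h}(v_1-v_2)-[\mathcal{C}_{1,h}(v_1)-\mathcal{C}_{1,h}(v_2)]\|\le \tilde K\|v_1-v_2\|_{\Delta,p},
\]
with $\tilde K=\sqrt{1-\delta}/c_{\T^n}(p)$. By Step 1, the right-hand side is at most $\tilde K C_\Pi\xi(p)^{-1}\|\mathcal{C}_{2,h}(v_1-v_2)\|$, and $K_h:=\tilde K C_\Pi/\xi(p)<1$ by the hypothesis on $C_\Pi$. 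Lemma \ref{Lmm: near}, applied with $X=W_{h,p}$ and $Y=Y_h$, then shows that $\mathcal{C}_{1,h}:W_{h,p}\to Y_h$ is bijective, so \eqref{PG} has a unique solution, and
\[
\|\mathcal{C}_{2,h}(w_1-w_2)\|\le (1-K_h)^{-1}\|\mathcal{C}_{1,h}(w_1)-\mathcal{C}_{1,h}(w_2)\|.
\]

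\textbf{Step 3: quasi-optimality.} Fix $w_h\in W_{h,p}$ and write $|v-v_h|_{W^{2,p}}\le c_{\T^n}(p)^{-1}(\|v-w_h\|_{\Delta,p}+\|w_h-v_h\|_{\Delta,p})$. By Steps 1 and 2,
\[
\|w_h-v_h\|_{\Delta,p}\le \frac{C_\Pi}{\xi(p)(1-K_h)}\|\mathcal{C}_{1,h}(w_h)\|.
\]
Since $\mathcal{C}_{1,h}(v_h)=0$ and $\mathcal{C}_1(v)=0$, we have $\|\mathcal{C}_{1,h}(w_h)\|\le\|\mathcal{C}_1(w_h)-\mathcal{C}_1(v)\|_{W_q^\ast}$. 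Bounding this by the nearness estimate \eqref{K tilde near} plus $\|\mathcal{C}_2(w_h-v)\|_{W_q^\ast}\le\|w_h-v\|_{\Delta,p}$ gives $(1+\tilde K)\|v-w_h\|_{\Delta,p}$. Collecting the constants,
\[
\frac{C_\Pi(1+\tilde K)}{\xi(p)(1-K_h)}=\frac{C_\Pi}{\xi(p)}\,\frac{\xi(p)c_{\T^n}(p)+\sqrt{1-\delta}}{\xi(p)c_{\T^n}(p)-C_\Pi\sqrt{1-\delta}},
\]
and taking the infimum over $w_h$ yields exactly the stated bound.
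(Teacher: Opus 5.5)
Your proof is correct and follows essentially the paper's route: you restrict $\mathcal{C}_1,\mathcal{C}_2$ to $W_{h,q}$, get a closed range from the Fortin inf-sup bound, and identify that range with the annihilator of $N_h$ (the paper does this via the closed range theorem for the inclusion $j_h$, you via closedness plus Hahn--Banach density, which is the same fact); you then invoke \eqref{compat} and Lemma \ref{Lmm: near} and conclude quasi-optimality. One small correction: your final displayed identity holds only when $\xi(p)=1$, since your left-hand side equals $C_\Pi\bigl(c_{\T^n}(p)+\sqrt{1-\delta}\bigr)/\bigl(\xi(p)c_{\T^n}(p)-C_\Pi\sqrt{1-\delta}\bigr)$, which is at most the stated constant because $\xi(p)\le 1$, so the claimed bound still follows (the paper obtains the stated constant exactly because it bounds the consistency term via \eqref{K tilde near}, i.e.\ with the factor $1+\tilde K/\xi(p)$, whereas your factor $1+\tilde K$ is sharper).
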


\begin{proof}
We introduce the bounded linear operator
\begin{align*}
    \mathcal{C}_{h,2}:W_{h,p} \rightarrow Y_h,\qquad w_h\mapsto \mathcal{C}_{h,2}(w_h) :=  \left.\mathcal{C}_2 (w_h)\right\rvert_{W_{h,q}},
\end{align*}
where $Y_h := \{ \left.\mathcal{C}_2 (w_h)\right\rvert_{W_{h,q}} : w_h\in W_{h,p}\} \subset W_{h,q}^\ast$. Further, we introduce the operator
\begin{align*}
    \mathcal{C}_{h,1}:W_{h,p} \rightarrow Y_h,\qquad w_h\mapsto \mathcal{C}_{h,1}(w_h) :=  \left.\mathcal{C}_1 (w_h)\right\rvert_{W_{h,q}},
\end{align*}
which is yet to be shown to be well-defined. Then, the problem \eqref{PG} is equivalent to finding $v_h\in W_{h,p}$ such that $\mathcal{C}_{h,1}(v_h) = 0$. We split the proof into four steps.

\textbf{Step 1.} We show that $\mathcal{C}_{h,1}(W_{h,p})\subset Y_h$, and hence, $\mathcal{C}_{h,1}$ is well-defined.

First, we use Lemma \ref{Lmm: xi expl form}, bijectivity of $\left.\Delta\right\rvert_{W_q}:W_q\rightarrow L^q_0(\T^n)$, and the properties of $\Pi_h$ from \eqref{Fortin} to see that for any $w_h\in W_{h,p}$ we have
\begin{align}\label{long chain}
\begin{split}
    &\|w_h\|_{\Delta,p}     = \sup_{g\in L^q(\T^n)\backslash\{0\}} \frac{\langle \mathcal{C}_2(w_h),(\Delta_{W_q\rightarrow L^q_0(\T^n)})^{-1}(g-\int_{\T^n} g)\rangle}{\|g\|_{L^q(\T^n)}} \\
    &\quad\leq \sup_{z\in W_q\backslash\{0\}} \frac{\lvert \langle \mathcal{C}_2(w_h),z\rangle\rvert}{\|z\|_{\Delta,q}}\sup_{g\in L^q(\T^n)\backslash\{0\}} \frac{\|g-\int_{\T^n} g\|_{L^q(\T^n)}}{\|g\|_{L^q(\T^n)}} \\
    &\quad= (\xi(p))^{-1}\sup_{z\in W_q\backslash\{0\}} \frac{\lvert \langle \mathcal{C}_2(w_h),z\rangle\rvert}{\|z\|_{\Delta,q}} \\
    &\quad= (\xi(p))^{-1}\sup_{z\in W_q\backslash\{0\}} \frac{\lvert \langle \mathcal{C}_{h,2}(w_h),\Pi_h (z)\rangle\rvert}{\|z\|_{\Delta,q}}\leq \frac{C_\Pi}{\xi(p)}  \sup_{z_h\in W_{h,q}\backslash\{0\}} \frac{\lvert \langle \mathcal{C}_{h,2}(w_h),z_h  \rangle \rvert}{\|z_h\|_{\Delta,q}}.
\end{split}
\end{align}
In view of the above estimate, completeness of $W_{h,p}$ as a closed subspace of the Banach space $W_p$, and continuity of $\mathcal{C}_{h,2}$, we find that $Y_h$ is a closed linear subspace of $W_{h,q}^\ast$.  Hence, the inclusion map $j_h:Y_h \hookrightarrow W_{h,q}^\ast$ is bounded, linear, and has closed range. By the closed range theorem (see, e.g., \cite{OD96}), we have
\begin{align}\label{closed range impl}
    Y_h = j_h(Y_h) = \{y_h\in W_{h,q}^\ast\,|\, \langle \phi_h,y_h\rangle = 0 \;\;\forall \phi_h\in \mathrm{ker}(j_h^\ast)\},
\end{align}
where $j_h^\ast: W_{h,q}^{\ast\ast}\rightarrow Y_h^\ast$, $\psi_h \mapsto \psi_h\circ j_h$. Let $w_h\in W_{h,p}$ and set $y_h :=\mathcal{C}_{h,1}(w_h)$. Note that $y_h = \left.\mathcal{C}_1 (w_h)\right\rvert_{W_{h,q}}\in W_{h,q}^\ast$. Now let $\phi_h\in \mathrm{ker}(j_h^\ast)\subset W_{h,q}^{\ast\ast}$ and observe that, by reflexivity of $W_{h,q}$ as a closed linear subspace of the reflexive Banach space $W_q$, there exists $z_h\in W_{h,q}$ such that $\langle \phi_h,\cdot\rangle_{W_{h,q}^{\ast\ast},W_{h,q}^\ast} = \langle \cdot ,z_h\rangle_{W_{h,q}^\ast,W_{h,q}}$. Then, we have that $\langle \mathcal{C}_2(x_h),z_h\rangle =  \langle \phi_h, j_h( \mathcal{C}_{h,2} (x_h))\rangle = \langle j_h^\ast(\phi_h),\mathcal{C}_{h,2} (x_h)\rangle = 0$ for all $x_h\in W_{h,p}$, and hence, $\langle \phi_h,y_h\rangle = \langle y_h,z_h\rangle = \langle \mathcal{C}_1(w_h),z_h\rangle = 0$ by assumption \eqref{compat}. In view of \eqref{closed range impl}, we obtain $y_h\in Y_h$. Altogether, there holds $\mathcal{C}_{h,1}(W_{h,p})\subset Y_h$.

\textbf{Step 2.} We show that $\mathcal{C}_{h,1}$ is near $\mathcal{C}_{h,2}$.

Since $Y_h$ is a closed linear subspace of the Banach space $W_{h,q}^\ast$, we know that $Y_h$ equipped with the inherited norm
\begin{align*}
    \| y_h \|_{Y_h} := \sup_{z_h\in W_{h,q}\backslash\{0\}} \frac{\lvert \langle y_h,z_h  \rangle_{W_{h,q}^\ast,W_{h,q}} \rvert}{\|z_h\|_{\Delta,q}}\qquad\text{for }\; y_h\in Y_h
\end{align*}
is a Banach space. Using linearity of $\mathcal{C}_{h,2}$ and $\mathcal{C}_2$, and the estimate \eqref{K tilde near} that established that $\mathcal{C}_1$ is near $\mathcal{C}_2$, we have for any $w_h,w_h' \in W_{h,p}$ that
\begin{align}\label{nearste2}
\begin{split}
    &\| \mathcal{C}_{h,2}(w_h) - \mathcal{C}_{h,2}(w_h') - [\mathcal{C}_{h,1}(w_h) - \mathcal{C}_{h,1}(w_h')] \|_{Y_h} \\
    &\quad =\left\| \left.\left(\mathcal{C}_{2}(w_h-w_h') - [\mathcal{C}_{1}(w_h) - \mathcal{C}_{1}(w_h')]\right)\right\rvert_{W_{h,q}}\right\|_{Y_h} \\
    &\quad\leq  \| \mathcal{C}_{2}(w_h-w_h') - [\mathcal{C}_{1}(w_h) - \mathcal{C}_{1}(w_h')]\|_{W_q^\ast} \\
    &\quad\leq (\xi(p)c_{\T^n}(p))^{-1}\sqrt{1-\delta}\, \|\mathcal{C}_2(w_h-w_h')\|_{W_q^\ast} \\
    &\quad\leq C_\Pi(\xi(p)c_{\T^n}(p))^{-1}\sqrt{1-\delta}\, \| \mathcal{C}_{h,2}(w_h-w_h')\|_{Y_h}.
\end{split}
\end{align}
Noting that $C_\Pi(\xi(p)c_{\T^n}(p))^{-1}\sqrt{1-\delta} \in [0,1)$, we conclude that $\mathcal{C}_{h,1}$ is near $\mathcal{C}_{h,2}$.

\textbf{Step 3.} We observe that $\mathcal{C}_{h,2}:W_{h,p}\rightarrow Y_h$ is a bijection. Indeed, $\mathcal{C}_{h,2}$ is surjective by definition of $Y_h$, and injective by \eqref{long chain} and linearity of $\mathcal{C}_{h,2}$.  

\textbf{Step 4.} We can now conclude the proof. In view of the previous steps and Lemma \ref{Lmm: near}, we find that $\mathcal{C}_{h,1}:W_{h,p}\rightarrow Y_h$ is a bijection and we have the bound
\begin{align}\label{vh bd prep}
    \| \mathcal{C}_{h,2}(w_h - w_h') \|_{Y_h} \leq \frac{\| \mathcal{C}_{h,1}(w_h) - \mathcal{C}_{h,1}(w_h')\|_{Y_h}}{1 - C_\Pi(\xi(p)c_{\T^n}(p))^{-1}\sqrt{1-\delta}}\qquad \forall w_h,w_h'\in W_{h,p}.
\end{align}
In particular, there exists a unique $v_h\in W_{h,p}$ such that $\mathcal{C}_{h,1}(v_h) = 0$, or equivalently, there exists a unique solution $v_h\in W_{h,p}$ to \eqref{PG}. Further, using \eqref{long chain}, \eqref{vh bd prep}, $\mathcal{C}_{h,1}(v_h) = 0$, and $\mathcal{C}_1(v) = 0$, we have for any $w_h\in  W_{h,p}$ that
\begin{align}\label{err bd comb1}
\begin{split}
    \|v-v_h\|_{\Delta,p} &\leq \|v - w_h\|_{\Delta,p} + \|w_h-v_h\|_{\Delta,p}\\
    &\leq \|v - w_h\|_{\Delta,p} + C_\Pi (\xi(p))^{-1} \|\mathcal{C}_{h,2}(w_h-v_h)\|_{Y_h} \\
    &\leq \|v - w_h\|_{\Delta,p} + \frac{C_\Pi (\xi(p))^{-1}}{1 - C_\Pi(\xi(p)c_{\T^n}(p))^{-1}\sqrt{1-\delta}}\,\| \mathcal{C}_{h,1}(w_h)\|_{Y_h} \\
    &\leq \|v - w_h\|_{\Delta,p} + \frac{c_{\T^n}(p)C_\Pi }{\xi(p)c_{\T^n}(p) - C_\Pi\sqrt{1-\delta}}\,\| \mathcal{C}_{1}(v) - \mathcal{C}_{1}(w_h)\|_{W_q^\ast}.
\end{split}
\end{align}
For the final term on the right-hand side, we can use \eqref{K tilde near} to obtain
\begin{align}\label{err bd comb2}
\begin{split}
    \| \mathcal{C}_{1}(v) - \mathcal{C}_{1}(w_h)\|_{W_q^\ast} &\leq \| \mathcal{C}_{2}(v-w_h)\|_{W_q^\ast} + \| \mathcal{C}_{1}(v) - \mathcal{C}_{1}(w_h) - \mathcal{C}_{2}(v-w_h)\|_{W_q^\ast} \\
    &\leq \left( 1 +  (\xi(p)c_{\T^n}(p))^{-1}\sqrt{1-\delta}\right)\| \mathcal{C}_{2}(v-w_h)\|_{W_q^\ast} \\
    &\leq \left( 1 +  (\xi(p)c_{\T^n}(p))^{-1}\sqrt{1-\delta}\right)\| v-w_h\|_{\Delta,p}
\end{split}
\end{align}
for any $w_h\in W_{h,p}$. Combining \eqref{err bd comb1} and \eqref{err bd comb2}, we conclude that
\begin{align*}
    &c_{\T^n}(p)\lvert v-v_h\rvert_{W^{2,p}(\T^n)} \leq \|v-v_h\|_{\Delta,p} \\ &\qquad\qquad\qquad\leq \| v-w_h\|_{\Delta,p} + \frac{c_{\T^n}(p)C_\Pi \left( 1 +  (\xi(p)c_{\T^n}(p))^{-1}\sqrt{1-\delta}\right) }{\xi(p)c_{\T^n}(p) - C_\Pi\sqrt{1-\delta}}  \| v-w_h\|_{\Delta,p}
\end{align*}
for any $w_h\in W_{h,p}$, which yields the claimed error bound.
\end{proof}

Let us make two important remarks.

\begin{rmrk}\label{Rk: compat}
    Suppose that $W_{h,p} = W_{h,q} = S_h$ for some closed linear subspace $S_h \subset W_p\cap W_q$. Then, the compatibility condition \eqref{compat} is satisfied. Indeed, let $z_h\in S_h \subset W_2$ be such that $\langle \mathcal{C}_2(w_h),z_h\rangle = 0$ for all $w_h\in S_h$. Then, we have $\|z_h\|_{\Delta,2}^2 = \langle \mathcal{C}_2(z_h),z_h\rangle = 0$ and hence $z_h = 0$ since $\|\cdot\|_{\Delta,2}$ is a norm on $W_2$. Therefore, $\langle \mathcal{C}_1(w_h),z_h\rangle = \langle \mathcal{C}_1(w_h),0\rangle = 0$ for all $w_h\in S_h$.
\end{rmrk}

\begin{rmrk}\label{Rk: Fortin p = 2}
    Suppose that $p = 2$ and that $W_{h,p} = W_{h,q} = S_h$ for some closed linear subspace $S_h \subset W_2$. Then, both \eqref{Fortin} and \eqref{compat} are satisfied. Indeed, since $W_2$ is a Hilbert space with inner product $(z_1,z_2)_{\Delta,2} := (\Delta z_1,\Delta z_2)_{L^2(\T^n)}$ for $z_1,z_2\in W_2$, there exists a unique operator $\Pi_h: W_2 \rightarrow S_h$ such that $(\Pi_h (z), s_h)_{\Delta,2} = (z,s_h)_{\Delta,2}$ $\forall s_h\in S_h$ for any $z\in W_2$, and \eqref{Fortin} holds with $C_\Pi = 1 < (1-\delta)^{-1/2} =  \xi(2)c_{\T^n}(2)(1-\delta)^{-1/2}$. The compatibility condition \eqref{compat} is satisfied by Remark \ref{Rk: compat}.
\end{rmrk}

\subsection{Mixed finite element approximation of the corrector}\label{Sec: 4.2}

Let the setting be as in Section \ref{Sec: Setting FEM v}. Motivated by the fact that $\overline{H}(x,X)$ introduced in Definition \ref{Def: Hbar} only depends on $D^2 v$ rather than the function $v = v(\cdot;x,X)$ itself, we now present and analyze a mixed finite element method for the approximation of the corrector $v$. Another benefit of a mixed approach is the avoidance of $W^{2,p}$-conforming finite elements in a practical implementation.

\subsubsection{Mixed formulation}

For $r\in \{p,q\}$, we write $\tilde W_r := W^{1,r}(\T^n;\R^n)\cap L^r_0(\T^n;\R^n)$ and $\|\cdot\|_{D,r}:= \|D(\cdot)\|_{L^r(\T^n)} = \lvert\, \cdot\,\rvert_{W^{1,r}(\T^n)}$, and note that $(\tilde W_r,\|\cdot\|_{D,r})$ is a Banach space. We begin by introducing the operators
\begin{align*}
    \mathcal{\tilde C}_i:\tilde W_p\rightarrow \tilde W_q^\ast&,\qquad w\mapsto \mathcal{\tilde C}_i(w)
\end{align*}
for $i\in\{1,2\}$, where the actions of $\mathcal{\tilde C}_1(w)$ and $\mathcal{\tilde C}_2(w)$ are given by
\begin{align*}
    \langle \mathcal{\tilde C}_1(w),z\rangle &:= \int_{\T^n} (H(x,\cdot,X+Dw)) (-\nabla\cdot z) + \frac{1}{2}\int_{\T^n} (Dw-Dw^{\mathrm{T}}):(Dz-Dz^{\mathrm{T}}) ,\\
    \langle \mathcal{\tilde C}_2(w),z\rangle &:= \int_{\T^n} Dw:Dz 
\end{align*}
for $z\in \tilde W_q$. Note that for any $w\in W_p$ and $z\in W_q$ we have $\nabla w\in \tilde W_p$, $\nabla z\in \tilde W_q$, and $\langle \mathcal{C}_1(w),z\rangle = \langle \mathcal{\tilde C}_1(\nabla w),\nabla z\rangle$. The stabilization involved in the definition of $\mathcal{\tilde C}_1$ is a curl-curl stabilization in the spirit of \cite{Gal17b,Gal19}, extended to arbitrary dimension.

Our starting point for the construction of the finite element method is the following variational formulation for the gradient of the corrector:
\begin{align}\label{rho prob}
    \text{seek}\quad \rho\in \tilde W_p\quad\text{s.t.}\quad \mathcal{\tilde C}_1(\rho)= 0.
\end{align}
For its analysis, we introduce the constants
\begin{align}\label{ctilp dq}
\begin{split}
        \tilde c_{\T^n}(p) &:= \inf_{w\in \tilde W_p\backslash\{0\}} \frac{\|\Delta_{\tilde W_p \rightarrow \tilde W_q^\ast} (w)\|_{\tilde W_q^\ast}}{\|w\|_{D,p}} = \|\tilde C_2^{-1}\|^{-1},\\
        \tilde d_{\T^n}(q) &:= \inf_{z\in \tilde W_q, \nabla\cdot z\neq 0} \frac{\|z\|_{D,q}}{\|\nabla\cdot z\|_{L^q(\T^n)}} = \|\div \|_{(\tilde W_q,\|\cdot\|_{D,q})\rightarrow (L^q_0(\T^n),\|\cdot\|_{L^q(\T^n)})}^{-1},
\end{split}
\end{align}
and note that $\tilde c_{\T^n}(p) > 0$ and $\tilde d_{\T^n}(q)\in [ n^{-\frac{1}{2}},1]$, where well-definedness of the former follows from the fact that $\mathcal{\tilde C}_2 = -\Delta:\tilde W_p \rightarrow \tilde W_q^\ast$ is a bounded bijection.

\begin{thrm}[characterization of the gradient of the corrector]\label{thm: char rho}
    Let the setting be as described above, and suppose that $\delta > 1 - (\tilde c_{\T^n}(p)\tilde d_{\T^n}(q))^2$. Then, the problem \eqref{rho prob} has a unique solution $\rho\in \tilde W_p$, and there holds $\rho = \nabla v$, where $v\in W_p$ is the unique solution to \eqref{cell ag}.
\end{thrm}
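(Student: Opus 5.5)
The plan is to mimic the proof of Theorem \ref{Thm: cell}: show that $\mathcal{\tilde C}_1$ is near the linear bijection $\mathcal{\tilde C}_2$ as maps $\tilde W_p\to \tilde W_q^\ast$, apply Lemma \ref{Lmm: near} to obtain a unique $\rho\in\tilde W_p$ with $\mathcal{\tilde C}_1(\rho)=0$, and then check that $\nabla v$ is such a solution. The last step is immediate: for $v$ solving \eqref{cell ag} we have $D\nabla v = D^2 v$ symmetric, so the curl term vanishes. Moreover, $H(x,\cdot,X+D^2v)$ equals the constant $\overline{H}(x,X)$ a.e., and $\int_{\T^n}\nabla\cdot z=0$ for $z\in\tilde W_q$. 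Hence $\langle\mathcal{\tilde C}_1(\nabla v),z\rangle=0$ for all $z\in\tilde W_q$, and uniqueness gives $\rho=\nabla v$. Bijectivity of $\mathcal{\tilde C}_2$ is taken as given in the text preceding the theorem.

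For nearness, let $w_1,w_2\in\tilde W_p$, $w:=w_1-w_2$, $z\in\tilde W_q$, and use the pointwise identity
\begin{align*}
Dw:Dz = \tfrac12 (Dw-Dw^{\mathrm T}):(Dz-Dz^{\mathrm T}) + Dw:Dz^{\mathrm T}.
\end{align*}
On the torus, integration by parts twice gives $\int Dw:Dz^{\mathrm T} = \int (\nabla\cdot w)(\nabla\cdot z)$; this holds by density of smooth periodic fields. Hence
\begin{align*}
\langle \mathcal{\tilde C}_2(w)-[\mathcal{\tilde C}_1(w_1)-\mathcal{\tilde C}_1(w_2)],z\rangle = \int_{\T^n}\big(H(x,\cdot,X+Dw_1)-H(x,\cdot,X+Dw_2)+\mathrm{tr}(Dw)\big)(-\nabla\cdot z),
\end{align*}
since $\nabla\cdot w=\mathrm{tr}(Dw)$. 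The curl stabilization is designed precisely to make this cancellation exact. Next, the argument of \eqref{Lip type} applies verbatim with $D^2v_i$ replaced by the not necessarily symmetric matrices $Dw_i$. One only needs $\lvert M:N\rvert\le \lvert M\rvert\lvert N\rvert$ with $M=I_n-\gamma A$ symmetric, and it yields the pointwise bound $\sqrt{1-\delta}\,\lvert Dw\rvert$. If $H$ is a priori only defined on symmetric arguments, I would read $A:Y$ for general $Y$, which equals $A:\mathrm{sym}(Y)$ and $\lvert\mathrm{sym}\,Y\rvert\le\lvert Y\rvert$. By Hölder and the definition of $\tilde d_{\T^n}(q)$, i.e.\ $\|\nabla\cdot z\|_{L^q}\le \tilde d_{\T^n}(q)^{-1}\|z\|_{D,q}$, this gives
\begin{align*}
\|\mathcal{\tilde C}_2(w)-[\mathcal{\tilde C}_1(w_1)-\mathcal{\tilde C}_1(w_2)]\|_{\tilde W_q^\ast}\le \sqrt{1-\delta}\,\tilde d_{\T^n}(q)^{-1}\|w\|_{D,p}\le \frac{\sqrt{1-\delta}}{\tilde c_{\T^n}(p)\tilde d_{\T^n}(q)}\|\mathcal{\tilde C}_2(w)\|_{\tilde W_q^\ast},
\end{align*}
using the definition of $\tilde c_{\T^n}(p)$ in the last step. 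The constant is $<1$ exactly under the hypothesis $\delta>1-(\tilde c_{\T^n}(p)\tilde d_{\T^n}(q))^2$, so Lemma \ref{Lmm: near} with $a=1$ applies.

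The main obstacle is justifying the identity $\int Dw:Dz^{\mathrm T}=\int(\nabla\cdot w)(\nabla\cdot z)$ for $w\in W^{1,p}$, $z\in W^{1,q}$ with $p\neq q$. I would prove it for smooth periodic fields, where it is $\int \partial_j w_i\,\partial_i z_j=\int \partial_i w_i\,\partial_j z_j$ after moving $\partial_j$ and then $\partial_i$. Both sides are continuous on $W^{1,p}\times W^{1,q}$ by Hölder, so it extends by density. The rest is routine.
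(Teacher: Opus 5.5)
Your proof is correct. The well-posedness part (the Gaffney-type identity on the torus, the pointwise Cordes bound applied to the non-symmetric matrices $Dw_i$, then $\tilde d_{\T^n}(q)$, $\tilde c_{\T^n}(p)$ and Lemma~\ref{Lmm: near}) is exactly the paper's Step 1. One small slip: with your ordering $\mathcal{\tilde C}_2(w)-[\mathcal{\tilde C}_1(w_1)-\mathcal{\tilde C}_1(w_2)]$, the integrand should be multiplied by $+\nabla\cdot z$, not $-\nabla\cdot z$. This does not matter, since only the absolute value is used.

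Where you differ is the identification $\rho=\nabla v$. You check directly that $\nabla v\in\tilde W_p$ solves \eqref{rho prob}: the curl term vanishes because $D^2v$ is symmetric, and the $H$-term vanishes because $H(x,\cdot,X+D^2v)$ is constant and $\int_{\T^n}\nabla\cdot z=0$. Uniqueness of $\rho$ then gives $\rho=\nabla v$.

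The paper goes the other way. It tests $\mathcal{\tilde C}_1(\rho)=0$ with gradients $\nabla z'$, $z'\in W_q$, and uses $\Delta(W_q)=L^q_0(\T^n)$ to show that $H(x,\cdot,X+D\rho)$ is constant. From this it deduces that the curl term vanishes for every $z\in\tilde W_q$, so $\rho=\nabla\tilde\rho$ with $\tilde\rho\in W_p$ solving \eqref{cell ag}. Uniqueness for \eqref{cell ag} then gives $\tilde\rho=v$.

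Your direction is shorter. It also avoids the step, used in the paper without further comment, that a mean-zero curl-free $W^{1,p}$ field on $\T^n$ is the gradient of a $W^{2,p}$ function. However, it relies on the existence of $v$ from Theorem~\ref{Thm: cell}. The paper's direction instead constructs a corrector from the mixed problem itself, using only uniqueness for \eqref{cell ag}. Combining the two directions shows that the mixed formulation alone yields existence and uniqueness for \eqref{cell ag} under $\delta>1-(\tilde c_{\T^n}(p)\tilde d_{\T^n}(q))^2$.
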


\begin{proof}
    First, recall that $\mathcal{\tilde C}_2 = -\Delta:\tilde W_p \rightarrow \tilde W_q^\ast$ is a bounded bijection. Integration by parts reveals the Gaffney-type identity (see also \cite{LSW26})
    \begin{align}\label{Gaffney}
        \langle \mathcal{\tilde C}_2(w),z\rangle = \int_{\T^n} (\nabla\cdot w)(\nabla\cdot z) + \frac{1}{2}\int_{\T^n} (Dw-Dw^{\mathrm{T}}):(Dz-Dz^{\mathrm{T}})
    \end{align}
    for any $w\in \tilde W_p$ and $z\in \tilde W_q$.

    \textbf{Step 1.} We show that $\mathcal{\tilde C}_1$ is near $\mathcal{\tilde C}_2$. To this end, let $w_1,w_2\in \tilde W_p$. In view of \eqref{Gaffney}, we have for any $z\in \tilde W_q$ that
    \begin{align*}
        &\lvert \langle\mathcal{\tilde C}_1(w_1) - \mathcal{\tilde C}_1(w_2) -\mathcal{\tilde C}_2(w_1-w_2),z\rangle\rvert \\
        &=\int_{\T^n} (H(x,\cdot,X+Dw_1)-H(x,\cdot,X+Dw_2) + \nabla\cdot (w_1-w_2)) (-\nabla\cdot z). 
    \end{align*}
    Hence, using \eqref{ctilp dq} and \eqref{HLip}, we see that
    \begin{align}\label{tildene}
    \begin{split}
        &\| \mathcal{\tilde C}_2(w_1) - \mathcal{\tilde C}_2(w_2) - [\mathcal{\tilde C}_1(w_1) - \mathcal{\tilde C}_1(w_2)] \|_{\tilde W_q^\ast} \\&\qquad= \sup_{z\in \tilde W_q\backslash\{0\}} \|z\|_{D,q}^{-1}\;\lvert \langle\mathcal{\tilde C}_1(w_1) - \mathcal{\tilde C}_1(w_2) -\mathcal{\tilde C}_2(w_1-w_2),z\rangle\rvert  \\
    &\qquad\leq (\tilde d_{\T^n}(q))^{-1} \|H(x,\cdot,X+Dw_1)-H(x,\cdot,X+Dw_2) + \mathrm{tr}(D w_1-Dw_2)\|_{L^p(\T^n)} \\
    &\qquad \leq (\tilde d_{\T^n}(q))^{-1}\sqrt{1-\delta}\, \|D(w_1-w_2)\|_{L^p(\T^n)} \\
    &\qquad \leq (\tilde c_{\T^n}(p)\tilde d_{\T^n}(q))^{-1}\sqrt{1-\delta} \| \mathcal{\tilde C}_2(w_1) - \mathcal{\tilde C}_2(w_2)\|_{\tilde W_q^\ast}.
    \end{split}
    \end{align}
    Noting that $(\tilde c_{\T^n}(p)\tilde d_{\T^n}(q))^{-1}\sqrt{1-\delta} < 1$ since $\delta > 1 - (\tilde c_{\T^n}(p)\tilde d_{\T^n}(q))^2$, we deduce that $\mathcal{\tilde C}_1$ is near $\mathcal{\tilde C}_2$.

    \textbf{Step 2.} In view of Lemma \ref{Lmm: near} and the nearness established in Step 1, we conclude that $\mathcal{\tilde C}_1$ is bijective. In particular, there exists a unique $\rho\in \tilde W_p$ such that $\tilde C_1(\rho) = 0$, i.e., a unique solution to \eqref{rho prob}. Then, we have
    \begin{align*}
        \int_{\T^n} (H(x,\cdot,X+D\rho)) (-\Delta z) = \langle \mathcal{\tilde C}_1(\rho), \nabla z\rangle = 0\qquad \forall z\in W_q,
    \end{align*}
    and hence, $H(x,\cdot,X+D\rho) = \int_{\T^n} H(x,\cdot,X+D\rho)$ a.e. in $\T^n$. Therefore,
    \begin{align*}
        \frac{1}{2}\int_{\T^n} (D\rho-D\rho^{\mathrm{T}}):(Dz-Dz^{\mathrm{T}}) = \langle \mathcal{\tilde C}_1(\rho),z\rangle = 0\quad \forall z\in \tilde W_q,
    \end{align*}
    i.e., $\rho = \nabla \tilde{\rho}$ for some $\tilde \rho\in W_p$. Noting that $H(x,\cdot,X+D^2 \tilde \rho) = \int_{\T^n} H(x,\cdot,X+D^2 \tilde \rho)$ a.e. in $\T^n$, we use uniqueness of solutions to \eqref{cell ag} in $W_p$ to conclude that $\tilde \rho = v$.
\end{proof}

\subsubsection{Finite element approximation}

Let the setting be as in Theorem \ref{thm: char rho}. For chosen closed linear subspaces $\tilde W_{h,p}\subset \tilde W_p$ and $\tilde W_{h,q}\subset \tilde W_q$, we consider the following finite element method:
\begin{align}\label{PG mixed}
    \text{seek}\quad \rho_h\in \tilde W_{h,p}\quad\text{s.t.}\quad \langle \mathcal{\tilde C}_1(\rho_h),z_h\rangle= 0\quad \forall z_h\in \tilde W_{h,q}.
\end{align}
\begin{thrm}[finite element approximation of the gradient of the corrector]\label{Thm: FEM for v mixed}
    Let the setting be as described above. Suppose that there exists a Fortin operator $\tilde \Pi_h:\tilde W_q \rightarrow \tilde W_{h,q}$ and a constant $0\leq C_{\tilde \Pi} < \tilde c_{\T^n}(p)\tilde d_{\T^n}(q)(1-\delta)^{-1/2}$ such that
    \begin{align}\label{Fortin mixed}
        \langle \mathcal{\tilde C}_2(w_h),z-\tilde \Pi_h (z)\rangle = 0,\quad \lvert \tilde \Pi_h (z)\rvert_{W^{1,q}(\T^n)}\leq C_{\tilde \Pi} \lvert z\rvert_{W^{1,q}(\T^n)} \;\; \forall (w_h,z)\in \tilde W_{h,p}\times  \tilde W_q,
    \end{align}
    and that the following compatibility condition holds for all $z_h\in \tilde W_{h,q}$:
    \begin{align}\label{compat mixed}
        \left[\langle \mathcal{\tilde C}_2(w_h),z_h\rangle = 0\quad\forall w_h\in \tilde W_{h,p}\right]\;\Longrightarrow\; \left[\langle \mathcal{\tilde C}_1(w_h),z_h\rangle = 0\quad\forall w_h\in \tilde W_{h,p}\right].
    \end{align}
    Then, there exists a unique solution $\rho_h\in \tilde W_{h,p}$ to \eqref{PG mixed}, and we have the bound
    \begin{align*}
        \lvert \nabla v-\rho_h\rvert_{W^{1,p}(\T^n)} \leq \left(\hspace{-0.07cm} 1 \hspace{-0.05cm}+\hspace{-0.05cm} \frac{C_{\tilde \Pi}}{\tilde c_{\T^n}(p)} \frac{ \tilde c_{\T^n}(p)\tilde d_{\T^n}(q) +  \sqrt{1-\delta} }{\tilde c_{\T^n}(p)\tilde d_{\T^n}(q) - C_{\tilde \Pi}\sqrt{1-\delta}}   \right)\inf_{w_h\in \tilde W_{h,p}} \lvert \nabla v-w_h\rvert_{W^{1,p}(\T^n)}
    \end{align*}
    where $v\in W_p$ denotes the unique solution to \eqref{cell ag}.
\end{thrm}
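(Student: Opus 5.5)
The plan is to copy the proof of Theorem \ref{Thm: FEM for v} almost word for word. The substitutions are: $W_r$ becomes $\tilde W_r$, $\mathcal{C}_i$ becomes $\mathcal{\tilde C}_i$, and $\|\cdot\|_{\Delta,r}$ becomes $\|\cdot\|_{D,r}=\lvert\cdot\rvert_{W^{1,r}(\T^n)}$. The role of the product $\xi(p)c_{\T^n}(p)$ is split between two constants:
\begin{itemize}
\item $\tilde c_{\T^n}(p)$ is the inf-sup (stability) constant of $\mathcal{\tilde C}_2$;
\item $\tilde d_{\T^n}(q)$ enters only through the nearness estimate \eqref{tildene}.
\end{itemize}
Concretely, I introduce $Y_h:=\{\mathcal{\tilde C}_2(w_h)|_{\tilde W_{h,q}}:w_h\in\tilde W_{h,p}\}\subset\tilde W_{h,q}^\ast$, with the restricted operators $\mathcal{\tilde C}_{h,2}(w_h):=\mathcal{\tilde C}_2(w_h)|_{\tilde W_{h,q}}$ and $\mathcal{\tilde C}_{h,1}(w_h):=\mathcal{\tilde C}_1(w_h)|_{\tilde W_{h,q}}$. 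Problem \eqref{PG mixed} is then equivalent to $\mathcal{\tilde C}_{h,1}(\rho_h)=0$.

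\textbf{Step 1 (discrete inf-sup bound).} I replace the chain \eqref{long chain} by a bound obtained directly from the definition of $\tilde c_{\T^n}(p)$ and the Fortin property \eqref{Fortin mixed}. For $w_h\in\tilde W_{h,p}$,
\begin{align*}
\tilde c_{\T^n}(p)\|w_h\|_{D,p}\le\|\mathcal{\tilde C}_2(w_h)\|_{\tilde W_q^\ast}=\sup_{z\neq0}\frac{\lvert\langle\mathcal{\tilde C}_{h,2}(w_h),\tilde\Pi_h z\rangle\rvert}{\|z\|_{D,q}}\le C_{\tilde\Pi}\|\mathcal{\tilde C}_{h,2}(w_h)\|_{Y_h}.
\end{align*}
This has two consequences. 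First, $Y_h$ is closed in $\tilde W_{h,q}^\ast$, so with the inherited norm it is a Banach space. Second, $\mathcal{\tilde C}_{h,2}:\tilde W_{h,p}\to Y_h$ is injective, and it is surjective by definition of $Y_h$, hence bijective.

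\textbf{Step 2 (well-definedness).} I show $\mathcal{\tilde C}_{h,1}(\tilde W_{h,p})\subset Y_h$. This uses the closed range theorem exactly as in Step 1 of the proof of Theorem \ref{Thm: FEM for v}, together with reflexivity of $\tilde W_{h,q}$ and the compatibility condition \eqref{compat mixed}.

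\textbf{Step 3 (nearness).} Restricting to $\tilde W_{h,q}$ does not increase the dual norm. Combining this with \eqref{tildene} and Step 1 gives
\begin{align*}
\|\mathcal{\tilde C}_{h,2}(w_h-w_h')-[\mathcal{\tilde C}_{h,1}(w_h)-\mathcal{\tilde C}_{h,1}(w_h')]\|_{Y_h}\le \tilde K_h\,\|\mathcal{\tilde C}_{h,2}(w_h-w_h')\|_{Y_h},\qquad \tilde K_h:=\frac{C_{\tilde\Pi}\sqrt{1-\delta}}{\tilde c_{\T^n}(p)\tilde d_{\T^n}(q)}.
\end{align*}
By the hypothesis on $C_{\tilde\Pi}$ we have $\tilde K_h<1$. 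Lemma \ref{Lmm: near} then gives that $\mathcal{\tilde C}_{h,1}$ is bijective, so a unique $\rho_h$ exists, together with the stability bound $\|\mathcal{\tilde C}_{h,2}(w_h-\rho_h)\|_{Y_h}\le(1-\tilde K_h)^{-1}\|\mathcal{\tilde C}_{h,1}(w_h)\|_{Y_h}$.

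\textbf{Step 4 (error estimate).} By Theorem \ref{thm: char rho}, $\mathcal{\tilde C}_1(\nabla v)=0$. For any $w_h\in\tilde W_{h,p}$ I split
\begin{align*}
\|\nabla v-\rho_h\|_{D,p}\le\|\nabla v-w_h\|_{D,p}+\frac{C_{\tilde\Pi}}{\tilde c_{\T^n}(p)(1-\tilde K_h)}\|\mathcal{\tilde C}_1(\nabla v)-\mathcal{\tilde C}_1(w_h)\|_{\tilde W_q^\ast}.
\end{align*}
To bound the last factor I use \eqref{tildene} with the intermediate inequality, i.e. before converting back to $\|\mathcal{\tilde C}_2\cdot\|$, together with the bound $\|\mathcal{\tilde C}_2(\nabla v-w_h)\|_{\tilde W_q^\ast}\le\|\nabla v-w_h\|_{D,p}$, which is Hölder's inequality. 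This gives
\begin{align*}
\|\mathcal{\tilde C}_1(\nabla v)-\mathcal{\tilde C}_1(w_h)\|_{\tilde W_q^\ast}\le\Bigl(1+\frac{\sqrt{1-\delta}}{\tilde d_{\T^n}(q)}\Bigr)\|\nabla v-w_h\|_{D,p}.
\end{align*}
Multiplying the prefactors, the constant in front of $\|\nabla v-w_h\|_{D,p}$ is
\[
\frac{C_{\tilde\Pi}}{\tilde c_{\T^n}(p)}\cdot\frac{\tilde c_{\T^n}(p)\tilde d_{\T^n}(q)}{\tilde c_{\T^n}(p)\tilde d_{\T^n}(q)-C_{\tilde\Pi}\sqrt{1-\delta}}\cdot\frac{\tilde d_{\T^n}(q)+\sqrt{1-\delta}}{\tilde d_{\T^n}(q)}.
\]
I expect the main obstacle to be this constant bookkeeping. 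The product above simplifies to $\frac{C_{\tilde\Pi}}{\tilde c_{\T^n}(p)}\cdot\frac{\tilde c_{\T^n}(p)(\tilde d_{\T^n}(q)+\sqrt{1-\delta})}{\tilde c_{\T^n}(p)\tilde d_{\T^n}(q)-C_{\tilde\Pi}\sqrt{1-\delta}}$, whereas the stated numerator is $\tilde c_{\T^n}(p)\tilde d_{\T^n}(q)+\sqrt{1-\delta}$. These agree when $\tilde c_{\T^n}(p)=1$, and otherwise differ by $(\tilde c_{\T^n}(p)-1)\sqrt{1-\delta}$. I would first try to show $\tilde c_{\T^n}(p)\le1$, which would make my bound at least as good as the claimed one. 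I expect this to follow by testing the definition of $\tilde c_{\T^n}(p)$ with a suitable $w$ and using Hölder's inequality. If that fails, I would instead route the estimate through \eqref{tildene} with the final $\tilde c\tilde d$ form, so as to recover the stated constant exactly. Taking the infimum over $w_h$ then concludes.
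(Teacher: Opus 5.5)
Your proposal is correct and follows the paper's proof essentially step for step: the discrete inf-sup bound via the Fortin operator, the closed-range argument for well-definedness, nearness with constant $C_{\tilde\Pi}\sqrt{1-\delta}/(\tilde c_{\T^n}(p)\tilde d_{\T^n}(q))$, and the quasi-optimality split. The only difference is in Step 4, where the paper uses the final line of \eqref{tildene} and obtains the stated constant directly; your use of the intermediate line gives a slightly sharper constant, and the inequality $\tilde c_{\T^n}(p)\le 1$ you need holds for every $w$ by the same H\"older bound $\|\mathcal{\tilde C}_2(w)\|_{\tilde W_q^\ast}\le\|w\|_{D,p}$ you already invoke, so no special test function is required.
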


\begin{proof}
We introduce the bounded linear operator
\begin{align*}
    \mathcal{\tilde C}_{h,2}: \tilde W_{h,p} \rightarrow \tilde Y_h,\qquad w_h\mapsto \mathcal{\tilde C}_{h,2}(w_h) :=  \mathcal{\tilde C}_2 (w_h)|_{\tilde W_{h,q}},
\end{align*}
where $\tilde Y_h := \{ \mathcal{\tilde C}_2 (w_h)|_{\tilde W_{h,q}} : w_h\in \tilde W_{h,p}\} \subset \tilde W_{h,q}^\ast$. Further, we introduce the operator
\begin{align*}
    \mathcal{\tilde C}_{h,1}:\tilde W_{h,p} \rightarrow \tilde Y_h,\qquad w_h\mapsto \mathcal{\tilde C}_{h,1}(w_h) :=  \mathcal{\tilde C}_1 (w_h)|_{\tilde W_{h,q}},
\end{align*}
which is yet to be shown to be well-defined. Then, the problem \eqref{PG mixed} is equivalent to finding $\rho_h\in \tilde W_{h,p}$ such that $\mathcal{\tilde C}_{h,1}(\rho_h) = 0$. The proof now proceeds similarly to the proof of Theorem \ref{Thm: FEM for v} and we focus on the required modifications.

\textbf{Step 1.} We show that $\mathcal{\tilde C}_{h,1}(\tilde W_{h,p})\subset \tilde Y_h$, and hence, $\mathcal{\tilde C}_{h,1}$ is well-defined.

The abstract functional analytic argument from Step 1 of the proof of Theorem \ref{Thm: FEM for v} goes through with \eqref{long chain} replaced by the bound
\begin{align}\label{long chain 2}
\begin{split}
    \|w_h\|_{D,p}     &\leq (\tilde c_{\T^n}(p))^{-1}\sup_{z\in \tilde W_q\backslash\{0\}} \frac{\lvert \langle \mathcal{\tilde C}_2(w_h),z\rangle\rvert}{\|z\|_{D,q}} \\
    &= (\tilde c_{\T^n}(p))^{-1}\sup_{z\in \tilde W_q\backslash\{0\}} \frac{\lvert \langle \mathcal{\tilde C}_{h,2}(w_h),\tilde \Pi_h (z)\rangle\rvert}{\|z\|_{D,q}}\leq \frac{C_{\tilde\Pi}}{\tilde c_{\T^n}(p)}  \sup_{z_h\in \tilde W_{h,q}\backslash\{0\}} \frac{\lvert \langle \mathcal{\tilde C}_{h,2}(w_h),z_h  \rangle \rvert}{\|z_h\|_{D,q}}.
\end{split}
\end{align}
for any $w_h\in \tilde W_{h,p}$. 

\textbf{Step 2.} We show that $\mathcal{\tilde C}_{h,1}$ is near $\mathcal{\tilde C}_{h,2}$.

Since $\tilde Y_h$ is a closed linear subspace of the Banach space $\tilde W_{h,q}^\ast$, we know that $\tilde Y_h$ equipped with the inherited norm
\begin{align*}
    \| y_h \|_{\tilde Y_h} := \sup_{z_h\in \tilde W_{h,q}\backslash\{0\}} \|z_h\|_{D,q}^{-1}\lvert \langle y_h,z_h  \rangle_{\tilde W_{h,q}^\ast,\tilde W_{h,q}} \rvert\qquad\text{for }\; y_h\in \tilde Y_h
\end{align*}
is a Banach space. Arguing as in \eqref{nearste2} with \eqref{tildene} taking the role of \eqref{K tilde near}, we find
\begin{align*}
    &\| \mathcal{\tilde C}_{h,2}(w_h) - \mathcal{\tilde C}_{h,2}(w_h') - [\mathcal{\tilde C}_{h,1}(w_h) - \mathcal{\tilde C}_{h,1}(w_h')] \|_{\tilde Y_h} \\
    &\quad\leq C_{\tilde \Pi}(\tilde c_{\T^n}(p)\tilde d_{\T^n}(q))^{-1}\sqrt{1-\delta}\, \| \mathcal{\tilde C}_{h,2}(w_h-w_h')\|_{\tilde Y_h}\qquad \forall w_h,w_h'\in \tilde W_{h,p},
\end{align*}
which implies that $\mathcal{\tilde C}_{h,1}$ is near $\mathcal{\tilde C}_{h,2}$ since $C_{\tilde \Pi}(\tilde c_{\T^n}(p)\tilde d_{\T^n}(q))^{-1}\sqrt{1-\delta} \in [0,1)$.

\textbf{Step 3.} We observe that $\mathcal{\tilde C}_{h,2}:\tilde W_{h,p}\rightarrow \tilde Y_h$ is a bijection. Indeed, $\mathcal{\tilde C}_{h,2}$ is surjective by definition of $\tilde Y_h$, and injective by \eqref{long chain 2} and linearity of $\mathcal{\tilde C}_{h,2}$.

\textbf{Step 4.} We can now conclude the proof. In view of the previous steps and Lemma \ref{Lmm: near}, we find that $\mathcal{\tilde C}_{h,1}:\tilde W_{h,p}\rightarrow \tilde Y_h$ is a bijection and we have the bound
\begin{align}\label{vh bd prep 2}
    \| \mathcal{\tilde C}_{h,2}(w_h - w_h') \|_{\tilde Y_h} \leq \frac{\| \mathcal{\tilde C}_{h,1}(w_h) - \mathcal{\tilde C}_{h,1}(w_h')\|_{\tilde Y_h}}{1 - C_{\tilde \Pi}(\tilde c_{\T^n}(p)\tilde d_{\T^n}(q))^{-1}\sqrt{1-\delta}}\qquad \forall w_h,w_h'\in \tilde W_{h,p}.
\end{align}
In particular, there exists a unique $\rho_h\in \tilde W_{h,p}$ such that $\mathcal{\tilde C}_{h,1}(\rho_h) = 0$, or equivalently, there exists a unique solution $\rho_h\in \tilde W_{h,p}$ to \eqref{PG mixed}. Further, using \eqref{long chain 2}, \eqref{vh bd prep 2}, $\mathcal{\tilde C}_{h,1}(\rho_h) = 0$, and $\mathcal{\tilde C}_1(\nabla v) = 0$, we argue analogously to \eqref{err bd comb1} and \eqref{err bd comb2} to obtain
\begin{align*}
    &\|\nabla v-\rho_h\|_{D,p} \leq \|\nabla v - w_h\|_{D,p} + \frac{\tilde d_{\T^n}(q)C_{\tilde \Pi} }{\tilde c_{\T^n}(p)\tilde d_{\T^n}(q) - C_{\tilde \Pi}\sqrt{1-\delta}}\,\| \mathcal{\tilde C}_{1}(\nabla v) - \mathcal{\tilde C}_{1}(w_h)\|_{\tilde W_q^\ast},\\
    &\| \mathcal{\tilde C}_{1}(\nabla v) - \mathcal{\tilde C}_{1}(w_h)\|_{\tilde W_q^\ast} \leq\left( 1 +  (\tilde c_{\T^n}(p)\tilde d_{\T^n}(q))^{-1}\sqrt{1-\delta}\right)\| \nabla v-w_h\|_{D,p}
\end{align*}
for any $w_h\in \tilde W_{h,p}$, which in combination yields
\begin{align*}
    \|\nabla v-\rho_h\|_{D,p} \leq \left( 1 + \frac{\tilde d_{\T^n}(q)C_{\tilde \Pi} \left( 1 +  (\tilde c_{\T^n}(p)\tilde d_{\T^n}(q))^{-1}\sqrt{1-\delta}\right) }{\tilde c_{\T^n}(p)\tilde d_{\T^n}(q) - C_{\tilde \Pi}\sqrt{1-\delta}}   \right)\| \nabla v-w_h\|_{D,p}
\end{align*}
for any $w_h\in \tilde W_{h,p}$, which yields the claimed error bound.
\end{proof}

Let us make two important remarks in the spirit of Remarks \ref{Rk: compat} and \ref{Rk: Fortin p = 2}.

\begin{rmrk}\label{Rk: compat 2}
    Suppose that $\tilde W_{h,p} = \tilde W_{h,q} = \tilde S_h$ for some closed linear subspace $\tilde S_h \subset \tilde W_p\cap \tilde W_q$. Then, the compatibility condition \eqref{compat mixed} is satisfied. Indeed, let $z_h\in \tilde S_h \subset \tilde W_2$ be such that $\langle \mathcal{\tilde C}_2(w_h),z_h\rangle = 0$ for all $w_h\in \tilde S_h$. Then, we have $\|z_h\|_{D,2}^2 = \langle \mathcal{\tilde C}_2(z_h),z_h\rangle = 0$ and hence $z_h = 0$ since $\|\cdot\|_{D,2}$ is a norm on $\tilde W_2$. Therefore, $\langle \mathcal{\tilde C}_1(w_h),z_h\rangle = \langle \mathcal{\tilde C}_1(w_h),0\rangle = 0$ for all $w_h\in \tilde S_h$.
\end{rmrk}

\begin{rmrk}\label{Rk: Fortin p = 2 2}
    Suppose that $p = 2$ and that $\tilde W_{h,p} = \tilde W_{h,q} = \tilde S_h$ for some closed linear subspace $\tilde S_h \subset \tilde W_2$. Then, both \eqref{Fortin mixed} and \eqref{compat mixed} are satisfied. Indeed, since $\tilde W_2$ is a Hilbert space with inner product $(z_1,z_2)_{D,2} := (D z_1,D z_2)_{L^2(\T^n)}$ for $z_1,z_2\in \tilde W_2$, there exists a unique operator $\tilde \Pi_h: \tilde W_2 \rightarrow \tilde S_h$ such that $(\tilde \Pi_h (z), s_h)_{D,2} = (z,s_h)_{D,2}$ $\forall s_h\in \tilde S_h$
    for any $z\in \tilde W_2$, and \eqref{Fortin mixed} holds with $C_{\tilde\Pi} = 1 < (1-\delta)^{-1/2} =  \tilde c_{\T^n}(2)\tilde{d}_{\T^n}(2)(1-\delta)^{-1/2}$. The compatibility condition \eqref{compat mixed} is satisfied by Remark \ref{Rk: compat 2}.
\end{rmrk}

\subsection{Numerical approximation of the effective Hamiltonian}\label{Sec: 4.3}

For the approximation of the effective Hamiltonian, we take the simplest method, that is, we choose $p = 2$ and $\tilde W_{h,p} = \tilde W_{h,q} = \tilde S_h$ for some family of closed linear subspaces $(\tilde S_h)_{h>0} \subset \tilde W_2 = H^1(\T^n)\cap L^2_0(\T^n)$ with the approximation property that $\inf_{w_h\in \tilde S_h} \lvert g-w_h\rvert_{H^1(\T^n)} \rightarrow 0$ as $h\rightarrow 0$ for all $g\in \tilde W_2$. 

Recall from Remark \ref{Rk: Fortin p = 2 2} that all assumptions of Theorem \ref{Thm: FEM for v mixed} are met. For a pair $(x,X)\in \overline{\Omega}\times \R^{n\times n}_{\mathrm{sym}}$, let $\rho_h(\,\cdot\,;x,X)$ denote the finite element approximation to $\nabla v(\,\cdot\,;x,X)$ from $\tilde S_h$ obtained from Theorem \ref{Thm: FEM for v mixed}. In view of Definition \ref{Def: Hbar}, we introduce the approximate effective Hamiltonian
\begin{align*}
    \overline{H}_h:\overline{\Omega}\times \R^{n\times n}_{\mathrm{sym}}\rightarrow \R,\qquad  (x,X)\mapsto\overline{H}_h(x,X) := \int_{\mathbb{T}^n} H(x,\,\cdot\,,X+D\rho_h(\,\cdot\,;x,X)).
\end{align*}

\begin{thrm}[Numerical approximation of the effective Hamiltonian]
    Let the situation be as described above. Then, for any $(x,X)\in \overline{\Omega}\times \R^{n\times n}_{\mathrm{sym}}$ we have the bound
    \begin{align*}
        \lvert \overline{H}(x,X) - \overline{H}_h(x,X)\rvert \leq 2\frac{\sqrt{1-\delta} + \sqrt{n}}{1-\sqrt{1-\delta}}  \inf_{w_h\in \tilde S_h} \lvert \nabla v-w_h\rvert_{H^1(\T^n)},
    \end{align*}
    where $\overline{H}$ denotes the effective Hamiltonian defined in Definition \ref{Def: Hbar}.
\end{thrm}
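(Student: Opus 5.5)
The plan is to reduce the error in the effective Hamiltonian to the $H^1$-error of the gradient approximation, and then invoke Theorem \ref{Thm: FEM for v mixed} with $p=q=2$. Fix $(x,X)$ and write $v=v(\,\cdot\,;x,X)$ and $\rho_h=\rho_h(\,\cdot\,;x,X)$. By Definition \ref{Def: Hbar} and Theorem \ref{thm: char rho}, which gives $D^2 v = D\nabla v$, we have
\begin{align*}
\overline{H}(x,X)-\overline{H}_h(x,X) = \int_{\T^n} \left[H(x,\cdot,X+D\nabla v) - H(x,\cdot,X+D\rho_h)\right].
\end{align*}

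\textbf{Step 1: pointwise Lipschitz bound.} We bound the integrand pointwise using the Lipschitz-type estimate \eqref{HLip}. There is a minor technical point: $D\rho_h$ need not be symmetric. The definition of $H$ via $\inf\sup$ extends verbatim to all of $\R^{n\times n}$. Since $A_{\alpha\beta}$ is symmetric, $A_{\alpha\beta}:M = A_{\alpha\beta}:\tfrac12(M+M^{\mathrm T})$, and $\lvert \tfrac12(M+M^{\mathrm T})\rvert\le \lvert M\rvert$. Hence the argument of \eqref{Lip type} and \eqref{HLip} gives
\begin{align*}
\lvert H(x,y,M_1)-H(x,y,M_2)\rvert\le (\sqrt{1-\delta}+\sqrt n)\lvert M_1-M_2\rvert
\end{align*}
for all matrices $M_1,M_2$. (This extension was already used implicitly in \eqref{tildene}.)

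\textbf{Step 2: integrate.} Integrating over $\T^n$ and applying Cauchy--Schwarz on the unit-measure torus yields
\begin{align*}
\lvert \overline{H}(x,X)-\overline{H}_h(x,X)\rvert\le (\sqrt{1-\delta}+\sqrt n)\,\lvert \nabla v-\rho_h\rvert_{H^1(\T^n)}.
\end{align*}

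\textbf{Step 3: quasi-optimality constant for $p=2$.} By Remark \ref{Rk: Fortin p = 2 2}, Theorem \ref{Thm: FEM for v mixed} applies with $C_{\tilde\Pi}=1$, and we need $\tilde c_{\T^n}(2)=\tilde d_{\T^n}(2)=1$.
\begin{itemize}
\item For $\tilde c_{\T^n}(2)$: $\mathcal{\tilde C}_2$ is the Riesz map of the inner product $(D\cdot,D\cdot)_{L^2}$ on $\tilde W_2$. Hence $\|\mathcal{\tilde C}_2 w\|_{\tilde W_2^\ast}=\|w\|_{D,2}$, so $\tilde c_{\T^n}(2)=1$.
\item For $\tilde d_{\T^n}(2)$: the Gaffney identity \eqref{Gaffney} with $w=z$ gives $\|z\|_{D,2}^2=\|\nabla\cdot z\|^2_{L^2}+\tfrac12\|Dz-Dz^{\mathrm T}\|^2_{L^2}\ge \|\nabla\cdot z\|_{L^2}^2$, with equality for gradient fields. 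Hence $\tilde d_{\T^n}(2)=1$.
\end{itemize}
Inserting these values into the bound of Theorem \ref{Thm: FEM for v mixed}, the constant becomes
\begin{align*}
1+\frac{1+\sqrt{1-\delta}}{1-\sqrt{1-\delta}} = \frac{2}{1-\sqrt{1-\delta}}.
\end{align*}
Combining this with Step 2 gives exactly the claimed estimate.

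The main obstacle is not analytic depth but bookkeeping. One must justify applying \eqref{HLip} to the nonsymmetric $D\rho_h$ (Step 1), and confirm the values of the constants at $p=2$ so that the factor $2/(1-\sqrt{1-\delta})$ emerges exactly (Step 3).
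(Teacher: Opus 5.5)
Your proposal is correct and follows the paper's proof: bound $\lvert\overline{H}-\overline{H}_h\rvert$ by the $L^2(\T^n)$-norm of the difference of Hamiltonians, apply \eqref{HLip}, and insert $C_{\tilde\Pi}=\tilde c_{\T^n}(2)=\tilde d_{\T^n}(2)=1$ into Theorem \ref{Thm: FEM for v mixed} to get the factor $2/(1-\sqrt{1-\delta})$. Your verification of these three constants and your justification for applying \eqref{HLip} to the nonsymmetric $D\rho_h$ supply details that the paper leaves implicit.
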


\begin{proof}
Let $(x,X)\in \overline{\Omega}\times \R^{n\times n}_{\mathrm{sym}}$, and write $v:= v(\,\cdot\,;x,X)$ and $\rho_h := \rho_h(\,\cdot\,;x,X)$. Then, using Definition \ref{Def: Hbar}, the bound \eqref{HLip}, and Theorem \ref{Thm: FEM for v mixed}, we obtain
\begin{align*}
    &\lvert \overline{H}(x,X) - \overline{H}_h(x,X)\rvert \leq  \| H(x,\,\cdot\,,X+D^2 v) - H(x,\,\cdot\,,X+D\rho_h) \|_{L^2(\T^n)} \\
    &\qquad\leq (\sqrt{1-\delta} + \sqrt{n}) \lvert \nabla v - \rho_h\rvert_{H^1(\T^n)} \\
    &\qquad\leq (\sqrt{1-\delta} + \sqrt{n})\left(1+\frac{1 +  \sqrt{1-\delta} }{1 - \sqrt{1-\delta}}   \right)\inf_{w_h\in \tilde S_h} \lvert \nabla v-w_h\rvert_{H^1(\T^n)},
\end{align*}
where we have used that $C_{\tilde \Pi} = \tilde c_{\T^n}(2) = \tilde d_{\T^n}(2) = 1$.
\end{proof}

\appendix
\section{Some bounds on $c(p)$}\label{Sec: App}

\begin{lmm}\label{Lmm: cp leq 1}
Let $\Omega\subset \mathbb R^n$ be a nonempty open set and let $p\in [1,\infty)$. Then,
\begin{align*}
    c(p) := \inf_{v\in W^{2,p}(\Omega)\cap W^{1,p}_0(\Omega)\backslash\{0\}} \frac{\|\Delta v\|_{L^p(\Omega)}}{\|D^2 v\|_{L^p(\Omega)}} \leq 1.
\end{align*}
\end{lmm}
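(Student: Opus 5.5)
To prove $c(p)\le 1$, I will exhibit a family of test functions $v\in W^{2,p}(\Omega)\cap W^{1,p}_0(\Omega)$ for which $\|\Delta v\|_{L^p(\Omega)}/\|D^2 v\|_{L^p(\Omega)}$ comes arbitrarily close to a value at most $1$. The key pointwise observation is that $\lvert \Delta v\rvert = \lvert \mathrm{tr}(D^2 v)\rvert\le \sqrt{n}\,\lvert D^2 v\rvert$. This bound alone is useless, since it points the wrong way. However, if $D^2 v$ is essentially rank one, say $D^2 v\approx \phi''(x_1)\, e_1\otimes e_1$, then $\lvert \Delta v\rvert\approx \lvert D^2 v\rvert$ pointwise, and the ratio is approximately $1$.

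Since $\Omega$ is open, I fix a cube $Q=x_0+(-\ell,\ell)^n\subset\Omega$ and translate so that $x_0=0$. All test functions will be supported in $Q$ and extended by zero, so they lie in $W^{2,p}(\Omega)\cap W^{1,p}_0(\Omega)$.

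\textbf{Construction.} Let $\psi\in C_c^\infty((-\ell,\ell)^{n-1})$ be a fixed cutoff in the variables $x'=(x_2,\dots,x_n)$, with $\psi\not\equiv 0$. Let $\phi\in C_c^\infty((-\ell,\ell))$ with $\phi\not\equiv 0$, and set $\phi_k(t):=\phi(kt)$ for $k\ge1$. Define
\[
v_k(x):=\phi_k(x_1)\psi(x'),
\]
which is a highly oscillating (concentrated) profile in $x_1$ only. Then
\begin{align*}
\partial_{11}v_k&=k^2\phi''(kx_1)\psi(x'),\\
\partial_{1j}v_k&=k\phi'(kx_1)\partial_j\psi(x'),\\
\partial_{ij}v_k&=\phi(kx_1)\partial_{ij}\psi(x'),
\end{align*}
for $i,j\ge2$. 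Changing variables $t=kx_1$, we get
\[
\|\partial_{11}v_k\|_{L^p}=k^{2-1/p}\|\phi''\|_{L^p}\|\psi\|_{L^p},
\]
while the remaining entries of $D^2 v_k$ are $O(k^{1-1/p})$ in $L^p$. Hence
\[
\|D^2 v_k\|_{L^p}=k^{2-1/p}\|\phi''\|_{L^p}\|\psi\|_{L^p}\,(1+o(1)),\qquad
\|\Delta v_k\|_{L^p}=k^{2-1/p}\|\phi''\|_{L^p}\|\psi\|_{L^p}\,(1+o(1)),
\]
by the triangle inequality (Minkowski) applied to the pointwise Frobenius norm. Therefore the ratio tends to $1$, which gives $c(p)\le 1$.

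\textbf{Main obstacle.} The only delicate point is controlling the lower-order terms in $L^p$ for the Frobenius norm. Pointwise,
\[
\bigl\lvert \lvert D^2 v_k\rvert-\lvert\partial_{11}v_k\rvert\bigr\rvert\le \lvert D^2v_k-\partial_{11}v_k\,e_1\otimes e_1\rvert,
\]
and similarly
\[
\lvert \Delta v_k-\partial_{11}v_k\rvert\le\sum_{j\ge2}\lvert\partial_{jj}v_k\rvert.
\]
Taking $L^p$ norms, both error terms are $O(k^{1-1/p})$, which is negligible relative to $k^{2-1/p}$. This is routine once written down. One must also ensure that $\phi''\not\equiv0$, which holds automatically for any nonzero compactly supported $\phi$.
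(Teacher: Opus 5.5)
Your proof is correct and takes essentially the same approach as the paper: a profile concentrated in $x_1$ times a fixed cutoff in $x'$, so that $\partial_{11}^2 v_k$ dominates both $D^2 v_k$ and $\Delta v_k$ in $L^p$. The only difference is that the paper uses the lower bound $\|D^2 v_k\|_{L^p(\Omega)}\geq \|\partial_{11}^2 v_k\|_{L^p(\Omega)}$, so it only needs an upper bound on $\|\Delta v_k\|_{L^p(\Omega)}$, whereas you derive two-sided asymptotics for $\|D^2 v_k\|_{L^p(\Omega)}$; this is slightly more work but equally valid.
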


\begin{proof}
We give the proof for $n\ge 2$ since the case $n=1$ is trivial. Without loss of generality, assume $0\in \Omega$. We fix a small cylinder $Q :=(-r,r)\times U \Subset \Omega$, where $U\subset \mathbb R^{n-1}$ is a nonempty open set and $r>0$. Let $\phi\in C_c^\infty((-1,1)) \backslash \{0\}$ and $\psi\in C_c^\infty(U)\backslash\{0\}$. For $k > \frac{1}{r}$, we write $x'=(x_2,\dots,x_n)$ and consider the function $v_k\in C_c^\infty(\Omega)\backslash\{0\}$ given by $v_k(x):=k^{-2} \phi(kx_1)\psi(x')$ for $x = (x_1,x')\in Q$, extended by zero. For $x = (x_1,x')\in Q$, we compute $\partial_{11}^2 v_k(x)=\phi''(kx_1)\psi(x')$ and $\Delta v_k(x) = \phi''(kx_1)\psi(x')
+k^{-2} \phi(kx_1)\Delta\psi(x')$. As $\|\phi''\|_{L^p(-1,1)}\|\psi\|_{L^p(U)} > 0$, we obtain
\begin{align*}
    c(p)\leq \frac{\|\Delta v_k\|_{L^p(\Omega)}}{\|\partial_{11}^2 v_k\|_{L^p(\Omega)}}
    \leq \frac{k^{-1/p}(\|\phi''\|_{L^p(-1,1)}\|\psi\|_{L^p(U)} + k^{-2}\|\phi\|_{L^p(-1,1)}\|\Delta\psi\|_{L^p(U)})}{k^{-1/p}\|\phi''\|_{L^p(-1,1)}\|\psi\|_{L^p(U)}} \rightarrow  1
\end{align*}
as $k\rightarrow \infty$.
\end{proof}

\begin{lmm}\label{Lmm: cp Om leq Tn}
Let $\Omega\subset \mathbb R^n$ be a nonempty open set and let $p\in [1,\infty)$. Then,
\begin{align}\label{Lmm 52 claim}
    c(p)\leq \inf_{v\in W^{2,p}(\T^n)\cap L^{p}_0(\T^n)\backslash\{0\}} \frac{\|\Delta v\|_{L^p(\T^n)}}{\|D^2 v\|_{L^p(\T^n)}}.
\end{align}
\end{lmm}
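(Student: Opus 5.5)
The plan is a blow-up (rescaling and localization) argument. It transfers any periodic test function into a compactly supported function in $\Omega$ while preserving, up to small error, the ratio $\|\Delta v\|_{L^p}/\|D^2 v\|_{L^p}$. This is in the spirit of the proof of Lemma \ref{Lmm: cp leq 1}.

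Fix $v\in W^{2,p}(\T^n)\cap L^p_0(\T^n)\backslash\{0\}$. By density we may assume $v$ is a smooth periodic function: trigonometric polynomials with zero mean are dense in $W^{2,p}(\T^n)\cap L^p_0(\T^n)$, and both $\|\Delta v\|_{L^p}$ and $\|D^2 v\|_{L^p}$ are continuous in the $W^{2,p}$ norm. We also need $D^2 v\neq 0$. This holds because $D^2 v=0$ forces $v$ to be affine and periodic, hence constant, hence zero. Without loss of generality $0\in\Omega$. Fix $\eta\in C_c^\infty(\Omega)$ with $\eta\not\equiv 0$, and set
\begin{align*}
    v_k(x):=k^{-2}\eta(x)\,v(kx),\qquad k\in\N .
\end{align*}
Then $v_k\in C_c^\infty(\Omega)\subset W^{2,p}(\Omega)\cap W^{1,p}_0(\Omega)$, and $v_k\neq 0$ for large $k$.

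Next we compute the derivatives:
\begin{align*}
    D^2 v_k(x)=\eta(x)(D^2 v)(kx)+k^{-1}\big(\nabla\eta\otimes\nabla v(kx)+\nabla v(kx)\otimes\nabla\eta\big)+k^{-2}v(kx)D^2\eta(x),
\end{align*}
and similarly $\Delta v_k(x)=\eta(x)(\Delta v)(kx)+\mathcal O(k^{-1})$ uniformly in $x$, with constants depending on $\|v\|_{C^1}$ and $\|\eta\|_{C^2}$. Hence
\begin{align*}
    \big|\,\|D^2 v_k\|_{L^p(\Omega)}-\|\eta\,(D^2v)(k\cdot)\|_{L^p(\Omega)}\big|\lesssim k^{-1},
\end{align*}
and the same bound holds with $\Delta$ in place of $D^2$.

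The key step is the classical oscillation lemma for periodic functions. For $g\in C(\T^n)$ and $\phi\in C_c(\R^n)$ we have
\begin{align*}
    \int\phi(x)g(kx)\,\mathrm{d}x\to\int_{\T^n}g\int\phi\qquad\text{as } k\to\infty.
\end{align*}
This follows, for instance, by approximating $\phi$ uniformly by step functions on cubes of side $k^{-1}$. Applying it with $g=|D^2v|^p$ and $g=|\Delta v|^p$, and $\phi=|\eta|^p$, gives
\begin{align*}
    \|\eta(D^2v)(k\cdot)\|^p_{L^p(\Omega)}\to\|\eta\|^p_{L^p}\,\|D^2 v\|^p_{L^p(\T^n)},
\end{align*}
and the analogous statement for $\Delta$. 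Since $\|\eta\|_{L^p}>0$ and $\|D^2v\|_{L^p(\T^n)}>0$, we conclude
\begin{align*}
    c(p)\leq\frac{\|\Delta v_k\|_{L^p(\Omega)}}{\|D^2 v_k\|_{L^p(\Omega)}}\longrightarrow\frac{\|\Delta v\|_{L^p(\T^n)}}{\|D^2 v\|_{L^p(\T^n)}}.
\end{align*}
Taking the infimum over $v$ yields \eqref{Lmm 52 claim}.

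The main technical point is justifying the smooth approximation in the first step while keeping $D^2 v\neq 0$, together with the oscillation lemma. Both are routine: mollification on the torus preserves periodicity and zero mean, and it gives convergence of both norms.
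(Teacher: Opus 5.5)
Your proposal is correct and follows the paper's proof essentially step for step. It uses the same test functions $k^{-2}\eta(x)v(kx)$, the same derivative expansion with $\mathcal{O}(k^{-1})$ error terms, the same averaging lemma applied to $|D^2v|^p$ and $|\Delta v|^p$, and the same density reduction to smooth $v$ (you do this reduction at the start, the paper at the end). Your explicit check that $D^2 v\neq 0$ for nonzero zero-mean periodic $v$ is a detail the paper uses only implicitly.
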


\begin{proof} 
Fix $\eta\in C_c^\infty(\Omega)\backslash\{0\}$ and $v\in C^\infty(\T^n)\cap L^p_0(\T^n)\backslash\{0\}$. We identify $v$ with its $\mathbb Z^n$-periodic extension to $\R^n$. For $k\in \N$, we introduce $u_k\in C_c^\infty(\Omega)$ given by
\begin{align*}
    u_k(x):=k^{-2}\eta(x)v(kx)\quad\text{for }x\in \Omega.
\end{align*}
First, we compute
\begin{align*}
D^2 u_k &= \eta\, D^2v(k\,\cdot) + k^{-1}\left(\nabla \eta\otimes \nabla v(k\,\cdot)
+ \nabla v(k\,\cdot)\otimes \nabla\eta \right)+k^{-2}v(k\,\cdot)D^2\eta,\\
\Delta u_k &= \eta\, \Delta v(k\,\cdot)+2k^{-1}\nabla\eta \cdot \nabla v(k\,\cdot) +
k^{-2}v(k\,\cdot)\Delta\eta,
\end{align*}
and use $\eta\in C_c^\infty(\Omega)$ and $v\in C^\infty(\T^n)$ to see that
\begin{align}\label{eq:laplacian-error}
\left\|D^2u_k-\eta\, D^2v(k\,\cdot)\right\|_{L^p(\Omega)} + \left\|\Delta u_k-\eta\,\Delta v(k\,\cdot)\right\|_{L^p(\Omega)} \xrightarrow[k\rightarrow \infty]{} 0.
\end{align}
Next, since $\lim_{k\rightarrow\infty}\int_{\R^n} g(k\,\cdot)h = \int_{\T^n}g\int_{\R^n}h$ for $(g,h)\in C(\T^n)\times  L^1(\R^n)$, we find
\begin{align}\label{eq:laplacian-average}
\begin{split}
\|\eta\, D^2v(k\,\cdot)\|_{L^p(\Omega)}&\xrightarrow[k\rightarrow \infty]{}\|\eta\|_{L^p(\Omega)}\|D^2 v\|_{L^p(\T^n)}\neq 0,\\ \|\eta\,\Delta v(k\,\cdot)\|_{L^p(\Omega)}&\xrightarrow[k\rightarrow \infty]{}\|\eta\|_{L^p(\Omega)}\|\Delta v\|_{L^p(\T^n)}.
\end{split}
\end{align}
Since $u_k \in W^{2,p}(\Omega)\cap W^{1,p}_0(\Omega)\backslash\{0\}$, we can combine \eqref{eq:laplacian-error} and \eqref{eq:laplacian-average} to find that
\begin{align*}
c(p) \leq \frac{\|\Delta u_k\|_{L^p(\Omega)}}{\|D^2u_k\|_{L^p(\Omega)}}
\xrightarrow[k\rightarrow \infty]{} \frac{\|\eta\|_{L^p(\Omega)}\|\Delta v\|_{L^p(\T^n)}}{\|\eta\|_{L^p(\Omega)}\|D^2v\|_{L^p(\T^n)}} = \frac{\|\Delta v\|_{L^p(\T^n)}}{\|D^2v\|_{L^p(\T^n)}}.
\end{align*}

Altogether, we conclude that $c(p)\leq\frac{\|\Delta v\|_{L^p(\T^n)}}{\|D^2 v\|_{L^p(\T^n)}}$ for all $v\in C^\infty(\T^n)\cap L^p_0(\T^n)\backslash\{0\}$. Finally, since $C^\infty(\T^n)\cap L^p_0(\T^n)$ is dense in $W^{2,p}(\T^n)\cap L^p_0(\T^n)$, this implies \eqref{Lmm 52 claim}.
\end{proof}

\section*{AI Statement}

We started this project in February 2025 and developed all results on our own. With the recent developments in AI, we decided to use ChatGPT to explore whether explicit formulas for the constants $c_{\T^n}(p),\xi(p),\tilde c_{\T^n}(p),\tilde d_{\T^n}(q)$ are known in the literature. This led us to \cite{AAB26,BSV13} and \cite{SS23} that yield the formulas for $c_{\T^2}(p)$ in Remark \ref{Rk: expl c} and $\xi(p)$ in Lemma \ref{Lmm: xi expl form}, respectively.

\bibliographystyle{plain}
\bibliography{ref_ST_Isaacs}

\end{document}